\parskip=2mm

\documentclass[12pt,leqno]{article}

\usepackage{amsthm}
\usepackage{amsfonts}
\usepackage{amssymb}
\usepackage{verbatim}
\usepackage[all]{xy}
\usepackage{amsmath, amscd, amsthm}
\usepackage{longtable}
\usepackage{amscd}
\usepackage{mathrsfs}
\usepackage{graphicx}
\usepackage{latexsym}

\newtheorem{Lemma}{\sc Lemma}  
\newtheorem{Theorem}{\sc Theorem}  
\newtheorem{Proposition}{\sc Proposition}  
\newtheorem{Corollary}{\sc Corollary}  
  
\newtheorem*{Claim}{\sc Claim}  
\theoremstyle{remark}
\newtheorem{Remark}{\sc Remark}

\usepackage{latexsym}

\newcommand{\vp}{\varphi}
\newcommand{\ld}{\ldots}

\newcommand{\ve}{\varepsilon}

\renewcommand{\th}{^{\textrm{th}}}
\newcommand{\st}{^{\textrm{st}}}

\newcommand{\ad}{\textrm{ad\:}}
\newcommand{\gr}{\textrm{gr\:}}
\newcommand{\id}{\textrm{id\,}}

\newcommand{\A}{\mathcal{A}}

\newcommand{\Z}{{\mathbb Z}}

\newcommand{\fib}{\mathrm{Fib}}
\newcommand{\supp}{\mathrm{Supp}\,}
\newcommand{\Lp}[1]{\mathrm{Lp}(#1)}

\theoremstyle{definition}
  
\theoremstyle{plain}
\begin{document}
\title{Growth of subalgebras and
subideals in free Lie algebras}
\author{Yuri Bahturin\thanks{Supported by NSERC Discovery grant \# 227060-09}
\\Department of Mathematics and Statistics\\Memorial University of Newfoundland\\St. John's, NL, A1C5S7
\and Alexander Olshanskii\thanks{Supported by NSF grant
DMS 1161294 and RFFR grant 11-01-00945}
\\Department of Mathematics\\
1326 Stevenson Center\\
Vanderbilt University\\
Nashville, TN 37240
}
\date{}
\maketitle
\begin{abstract}
We investigate subalgebras in free Lie algebras, the main tool being relative growth and cogrowth functions. Our study reveals drastic differences in the behavior of proper finitely generated subalgebras and nonzero subideals. For instance, the \textit{growth} of a proper finitely generated subalgebra $H$ of a free Lie algebra $L$, with respect to any fixed free basis $X$, is exponentially small compared to the growth of the whole of $L$. Quite opposite, the \textit{cogrowth} of any nonzero subideal $S$ is exponentially small compared to the growth of $L$.
 \end{abstract}  

\section{Introduction}\label{sINT}

According to the classical Shirshov - Witt theorem, every subalgebra of a free Lie algebra is free. There are a number of ways to draw distinction between different subalgebras. In this paper we are interested in the (relative) growth and cogrowth of subalgebras. 

Any free basis $X$ of a free Lie algebra $L=L(X)$ defines an increasing filtration 
\begin{displaymath}
L^{(1)}\subset L^{(2)}\subset\ldots\subset L^{(n)}\subset\ldots,
\end{displaymath}
 where $\dim L^{(i)}<\infty$, if $X$ is finite. The (relative) growth function of a subspace $H\subset L$ is an integer valued function of an integral argument whose $n\th$ value equals $\dim (H\cap L^{(n)})$. The $n\th$ value of the cogrowth function equals $\dim L^{(n)}/(H\cap L^{(n)})$.

Let us recall that if $\# X = m \ge 2$ then the growth of the whole of $L=L(X)$ is exponential. This easily follows from the classical Witt's formula \cite[Theorem 3.1.3]{B} for the dimension of the $n\th$ homogeneous component $L_n$ of $L$
\begin{equation}\label{en1}
\dim L_n=\frac{1}{n}\sum_{d|n}\mu(d)m^{\frac{n}{d}}\sim \frac{1}{n}m^n.
\end{equation}

Here $\mu$ is the M\"obius function and the equivalence means that the ratio of the left hand side to the right hand side tends to 1 as $n\to\infty$. The exponentiality holds for the growths and the cogrowths of the most of subalgebras studied in this paper but a more precise comparison shows that the degree of exponentiality can be rather different.

To be precise, we call a function $f(n)$ of natural argument with nonnegative  values \textit{exponential}, if the limit of the function $\sqrt[n]{f(n)}$ exists and is greater than 1. We call $\displaystyle\lim_{n\to\infty}\sqrt[n]{f(n)}$ the (exponential) \textit{base} of $f(n)$. 

Let us say that a (growth) function $f_1(n)$ is \textit{exponentially negligible} when compared to another (growth) function $f_2(n)$ if the ratio $\dfrac{f_2(n)}{f_1(n)}$ grows exponentially. We say that the growth or the cogrowth of a subalgebra $H$ of a free Lie algebra $L=L(X)$, with a fixed free basis $X$, is exponentially negligible if the respective relative growth (cogrowth) function for $H$ is exponentially negligible when compared with the growth function for $L$. The exponential base of the relative growth function for $H$ in $L$ with respect to a fixed free basis $X$ is denoted by $\beta_H$.

\medskip

In the study of the growth and cogrowth of subalgebras of free Lie algebras,  the best exlored is the case of ideals.  The cogrowth of an ideal $ I $ of a free Lie algebra $ L $ is actually the growth of the factor-algebra $ L/ I $. This notion is one of the main in the theory of infinite-dimensional Lie algebras. Many classes of algebras are defined in terms of their growth. For a general monograph on the topic see \cite{KL}. In the theory of simple Lie algebras, confirming a conjecture by V. Kac \cite{VK} (see also his monograph \cite{KM}), O. Mathieu classified simple Lie algebras of polynomial growth \cite{OM}. The growth of solvable Lie algebras and algebras satisfying polynomial identities also was studied by a number of authors. For a survey of this area see \cite{VP}. 

\medskip

Note that in the case of ideals, the cogrowth of a nonzero ideal $I$ of a free Lie algebra $L=L(X)$ of rank $\ge 2$ is exponentially negligible when compared with the growth of $L$. This easily follows from the results of \cite[Subsection 2.2]{BO} where the respective claim has been established for free associative algebras. Indeed, consider the free associative algebra $A=A(X)$ as the universal enveloping algebra of $L$ and the two-sided ideal $J$ of $A$ generated by $I$. Then $L/I\subset A/J$ by Poincar\'e - Birkhoff - Witt Theorem and the growth of $A/J$ by \cite[Proposition 8]{BO} is negligible. Since by Witt's formula (\ref{en1}) the exponent of the growth of $L$ is the same as that of $A$, it follows that the growth of $L/I$ is negligible, as well.

\medskip

Much less explored is the case of subalgebras which are not ideals. In this paper we start with general finite-dimensional subalgebras. 

\begin{Theorem}\label{t1I} 
The (relative) growth function $g_H(n)$ of any  proper finitely generated subalgebra $H$ of a free Lie algebra $L$ of finite rank $m$ is exponentially negligible.
\end{Theorem}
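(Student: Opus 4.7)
The plan is to pass to universal enveloping algebras and reduce the statement to a Hilbert series inequality, combining the Shirshov--Witt theorem with the Poincar\'e--Birkhoff--Witt (PBW) theorem. First I would replace $H$ by its associated graded $\mathrm{gr}\,H$ with respect to the filtration $L^{(1)} \subset L^{(2)} \subset \cdots$, which preserves finite generation, properness, and the growth function $g_H(n)$. One may therefore assume $H$ is a graded (homogeneous) subalgebra of $L$. By Shirshov--Witt, $H$ is free; fix a homogeneous free basis $y_1,\ldots,y_k$ with $y_i$ of $X$-degree $d_i\ge 1$, and set $g(t)=\sum_{i=1}^k t^{d_i}$.

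By PBW applied to the Lie inclusion $H\subset L$, the enveloping algebra $B=U(H)$ embeds in $A=U(L)$ as the free associative subalgebra on $\{y_1,\ldots,y_k\}$, so its Hilbert series in the $X$-grading is $P_B(t)=1/(1-g(t))$, while $P_A(t)=1/(1-mt)$. Since $H\subset B$ as graded subspaces, $\dim H^{(n)}\le\dim B^{(n)}$, and it suffices to bound the growth of $B$ in $A$.

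The central claim is that $g(1/m)<1$ whenever $H\ne L$. The weak inequality $g(1/m)\le 1$ follows from $\dim B_n\le\dim A_n=m^n$. For strict inequality, PBW also makes $A$ a free right $B$-module, whose transversal has Hilbert series $T(t)=P_A(t)/P_B(t)=(1-g(t))/(1-mt)$. Combining the two PBW product formulas $P_A(t)=\prod_n(1-t^n)^{-\dim L_n}$ and $P_B(t)=\prod_n(1-t^n)^{-\dim H_n}$ yields the key identity
\[T(t)=\prod_{n\ge 1}(1-t^n)^{-(\dim L_n-\dim H_n)}.\]
Now if $g(1/m)=1$, then $(1-mt)$ divides $(1-g(t))$ as polynomials, so $T(t)$ is itself a polynomial. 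But in the product above, any factor with $\dim L_n>\dim H_n$ contributes $(1-t^n)^{-p}$ with $p>0$, an honest infinite series; for the product to collapse to a polynomial every such factor must be trivial, forcing $\dim H_n=\dim L_n$ for every $n$ and hence $H=L$, contradicting properness. Therefore $g(1/m)<1$.

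Consequently the smallest positive root $\rho$ of $1-g(t)$ satisfies $\rho>1/m$, so $B$ has exponential growth base $\beta_B=1/\rho<m$, and $\dim H^{(n)}\le\dim B^{(n)}\le C\beta_B^n$. Combined with Witt's formula (\ref{en1}), which yields exponential base $m$ for $\dim L^{(n)}$, the ratio $g_L(n)/g_H(n)$ grows at least like $(m/\beta_B)^n$, i.e., exponentially, establishing the theorem. The crucial and subtlest step is the passage from the easy $g(1/m)\le 1$ to the strict inequality; its elegance comes from reading the transversal series $T(t)$ in two ways---as the rational function $(1-g(t))/(1-mt)$ (forced to be a polynomial when $g(1/m)=1$) and as an infinite PBW product---which can only coincide when $H=L$.
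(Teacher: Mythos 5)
Your proposal is correct, but it takes a genuinely different route from the paper's. Both arguments start identically, replacing $H$ by $\mathrm{gr}\,H$ so that one may assume $H$ is graded with a finite homogeneous free basis; after that the paper changes the free basis of $L$ so that the degree-one generators of $H$ miss some letter $x_1$, notes that every element of $H$ lies in the associative subalgebra generated by monomials none of which is a power of $x_1$, and then quotes the forbidden-subword estimate (\cite[Lemma 8]{BO}): words of length $n$ avoiding $x_1^d$ number at most $C(m-\varepsilon)^n$. You instead argue through Hilbert series: PBW identifies $U(H)$ with the free associative algebra on the homogeneous free basis, and the two readings of $P_{U(L)}/P_{U(H)}$ --- as the rational function $(1-g(t))/(1-mt)$ and as the infinite product $\prod_{n\ge 1}(1-t^n)^{-(\dim L_n-\dim H_n)}$ --- force $g(1/m)<1$ for proper $H$, hence an exponential upper bound with base $1/\rho<m$, where $\rho$ is the positive root of $g(t)=1$. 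Your route buys more quantitative information in one stroke: it identifies the base $\beta_H$ as $1/\rho$ and proves the strict inequality $\beta_H<m$ directly, facts the paper only reaches in Theorem \ref{t1aI}(ii) via Lemma \ref{ldXn}, and there the strictness is deduced \emph{from} Theorem \ref{t1I} rather than independently; your polynomial-versus-infinite-product observation is not in the paper at all. What the paper's combinatorial method buys instead is robustness: the subword-avoidance lemma is exactly the form that generalizes to infinite graded alphabets (Lemma \ref{bezslov}), which is what the proof of Theorem \ref{t2I} on subideals needs, whereas your trick requires $g$ to be a polynomial, i.e., finite generation. Two points worth making explicit in a final write-up: that a finitely generated subalgebra has a \emph{finite} irreducible free generating set, so $\mathrm{gr}\,H$ is indeed finitely generated and its leading-part basis is homogeneous (Lemmas \ref{pnC0} and \ref{pnC2}); and the graded form of the PBW facts you invoke (injectivity of $U(H)\to U(L)$ and the product formulas $P_{U(L)}=\prod_n(1-t^n)^{-\dim L_n}$, $P_{U(H)}=\prod_n(1-t^n)^{-\dim H_n}$), all standard but doing real work in your argument.
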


In the case of subalgebras which are \textit {subideals} the situaion is entirely opposite.

\medskip
 
Let us recall that  a subalgebra $M$ of a Lie algebra $L$ is named an $\ell$-\textit {subideal}, $\ell=1,2,\ldots$, if there is a decreasing sequence

 \begin{displaymath}
L \triangleright L_1 \triangleright \ldots \triangleright L_{\ell-1}\triangleright M,
\end{displaymath}
 where each term is an ideal in the preceding term. 

The general theory of infinite-dimensional Lie algebras with a sizeable portion of material devoted to subideals is exposed in a book \cite{AS}; however, our present topic, the \textit{growth and cogrowth of subideals}, is not covered there.

\begin{Theorem}\label{t2I}
The cogrowth of any nonzero subideal of a free Lie algebra of rank $m\ge 2$ is exponentially negligible.
\end{Theorem}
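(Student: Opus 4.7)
The plan is to proceed by induction on the subideal depth $\ell$. The base case $\ell=1$ is the ideal case, which was handled in the introduction via the embedding $L/I\hookrightarrow A(X)/\widetilde I$ coming from PBW together with the associative cogrowth result [BO, Proposition~8].

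For the inductive step, suppose the theorem holds for $(\ell-1)$-subideals, and let $M$ be a nonzero $\ell$-subideal with chain
$L = L_0 \triangleright L_1 \triangleright \cdots \triangleright L_{\ell-1} \triangleright L_\ell = M$.
Truncating at $L_{\ell-1}$ exhibits $L_{\ell-1}$ as a nonzero $(\ell-1)$-subideal of $L$, so by the inductive hypothesis its cogrowth in $L$ is exponentially negligible. From
\[
\dim L^{(n)}/(M\cap L^{(n)}) = \dim L^{(n)}/(L_{\ell-1}\cap L^{(n)}) + \dim(L_{\ell-1}\cap L^{(n)})/(M\cap L^{(n)}),
\]
it remains to bound the second summand. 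By Shirshov--Witt, $L_{\ell-1}$ is a free Lie algebra, and since $L_{\ell-1}\triangleright M$, PBW yields an embedding $L_{\ell-1}/M\hookrightarrow U(L_{\ell-1})/\mathcal{M}$, where $\mathcal{M}=U(L_{\ell-1})\cdot M$ is the two-sided associative ideal of $U(L_{\ell-1})$ generated by $M$ (the left ideal and the two-sided ideal coincide because $M$ is a Lie ideal of $L_{\ell-1}$). Viewing $U(L_{\ell-1})\subseteq A=U(L)$ as a filtered subalgebra with the inherited filtration $U(L_{\ell-1})\cap A^{(n)}$, one obtains
\[
\dim(L_{\ell-1}\cap L^{(n)})/(M\cap L^{(n)}) \;\le\; \dim\bigl(U(L_{\ell-1})\cap A^{(n)}\bigr)\big/\bigl(\mathcal{M}\cap A^{(n)}\bigr).
\]

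The main obstacle is to show that this right-hand side is exponentially negligible compared to $\dim L^{(n)}$. A direct appeal to [BO, Proposition~8] applied to $U(L_{\ell-1})$ only controls the cogrowth of $\mathcal{M}$ with respect to the \emph{native} filtration of $U(L_{\ell-1})=A(Y)$ coming from a free associative basis $Y$ of $L_{\ell-1}$; but as soon as $\ell\ge 2$, the set $Y$ has infinite rank and unbounded $X$-degrees, so this native filtration is sharply incompatible with the $X$-filtration we actually need. To bridge the gap I would choose a nonzero homogeneous $x\in M$ of $X$-degree $d$, replace $\mathcal{M}$ by the smaller two-sided ideal $U(L_{\ell-1})\cdot x\cdot U(L_{\ell-1})\subseteq\mathcal{M}$, and then try to adapt the counting arguments of [BO, Subsection~2.2] to the $X$-filtration. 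The adaptation would exploit a PBW decomposition of $A$ relative to a vector-space splitting $L=L_{\ell-1}\oplus C$ (so that PBW monomials factor into a ``$C$-part'' and a ``$L_{\ell-1}$-part'') and would crucially use the fact, already known from the inductive hypothesis, that $L_{\ell-1}$ is ``dense'' in $L$ with respect to the $X$-filtration. Carrying this adaptation out so as to obtain an exponential base strictly less than $m$ for the inherited cogrowth of $\mathcal{M}$ in $U(L_{\ell-1})$ is the essential new technical content of the proof.
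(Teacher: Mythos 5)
Your outline is not a proof: the decisive step is explicitly left open, and it is exactly the step that constitutes the content of the theorem. In your inductive step you must show that $\dim\bigl(L_{\ell-1}\cap L^{(n)}\bigr)/\bigl(M\cap L^{(n)}\bigr)\le C(m-\varepsilon)^n$, i.e.\ that the cogrowth, measured in the \emph{original} $X$-filtration, of an ideal generated by one element inside the infinitely generated free algebra $L_{\ell-1}$ is exponentially negligible. You correctly observe that [BO, Proposition 8] does not apply because a free (associative or Lie) basis $Y$ of $L_{\ell-1}$ is infinite with unbounded $X$-degrees, but then you only say you would ``try to adapt'' the counting of [BO] using a PBW splitting and the ``density'' of $L_{\ell-1}$; no such adaptation is given, and the inductive hypothesis (negligible cogrowth of $L_{\ell-1}$) is too weak to drive it. What is actually needed is quantitative control of the degree generating function of a homogeneous free basis $Y$ of the intermediate algebra: in the paper's notation, that $\sum_i k_i/m^i\le 1$ \emph{and} that $\sum_i k_i/(m-\zeta)^i$ converges for $\zeta$ in a neighborhood of $0$ (Conditions G and $\mathrm{W}_m$). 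The second, analytic condition does not follow from negligible cogrowth of $L_{\ell-1}$ in $L$, and without it the subword-avoidance counting that produces the base $m-\varepsilon$ breaks down; an arbitrary term $L_{\ell-1}$ of an arbitrary subideal chain gives you no handle on it.

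The paper also organizes the reduction differently, in a way that sidesteps your chain induction. It first replaces the given subideal by the $\ell$-subideal closure of a single element $w$, reduces to homogeneous $w$ via the associated graded algebra (Lemmas \ref{lGR} and the argument in Section \ref{sCS} that $\mathrm{id}^\ell_{\gr H}(\Lp w)\subset\gr(\mathrm{id}^\ell_H w)$), and then descends through an explicit chain of codimension-one ideals obtained by iterated Lazard elimination (Lemma \ref{pnC1}); along this particular chain Conditions G and $\mathrm{W}_m$ are verified by direct computation, and eventually $w$ becomes a linear combination of free generators. At that point the subideal depth $\ell$ enters only through Lemma \ref{ldegsub}: every LS-commutator whose associative support contains $y^\ell$ lies in $\mathrm{id}^\ell_L y$, so the cogrowth is bounded by the number of words avoiding the subword $y^\ell$, which Lemma \ref{bezslov} bounds by $C(m-\varepsilon)^n$ for graded infinite alphabets satisfying the two conditions. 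So your plan identifies the right obstacle but stops exactly where the new technique of Sections \ref{sCGW}--\ref{sCS} begins; as written it has a genuine gap.
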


The result of Theorem \ref{t2I} contrasts dramatically with the situation in the Group Theory where the cogrowth of a nontrivial subnormal subgroup of the free group $F_r$, $r\ge 2$, can be equivalent to the growth of $F_r$.

When compared to the proof of the same result for the ideals, as given earlier, the proof of this result required much more sophistication and even the development of a new piece of technique. This technique is presented in Section \ref{sCGW}. One of the immediate applications of the technique is the following result, which provides essential details to our claim of exponential negligibility of  finitely generated subalgebras in Theorem \ref{t1I}.  

\begin{Theorem}\label{t1aI} Let $L$ be a free Lie algebra of rank $m$, with a fixed  basis $X$. Then the following are true.
\begin{enumerate}
\item[\emph{(i)}] The relative growth function $g_H(n)$ of a proper nonabelian subalgebra $H$ of $L$, with respect to $X$, is exponential.
\item[\emph{(ii)}] If $H$ is finitely generated  nonabelian subalgebra of $L$ then the exponential base $\beta_H$ of $g_H$ is an algebraic integer. If $H$ is proper then $\beta_H<m$.
\item[\emph{(iii)}] For any real $m_0\in[1,m]$ there is a subalgebra $H$ of $L$ such that $\beta_H=m_0$. One can choose $H$ as a retract of an ideal of codimension 1 in $L$.
\item[\emph{(iv)}] For any real $m_0\in[1,m]$ there is a 2-subideal $S$ of $L$ such that the exponential base $\beta_{L/S}$ of the relative cogrowth function $g_{L/S}$ equals $m_0$. One can choose $S$ as an ideal in an ideal $M$ of codimension 1 in $L$, with free factor algebra $M/S$.
\end{enumerate}
\end{Theorem}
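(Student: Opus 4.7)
To prove (i) and (ii), I would start from Shirshov--Witt, which gives that every proper nonabelian subalgebra $H$ is a free Lie algebra of rank $\ge 2$. Replacing $H$ by its associated graded with respect to the filtration $L^{(n)}$ does not alter $g_H(n)$, so I may assume $H$ is a graded subalgebra, free on a homogeneous set $\{y_i\}$ with $L$-degrees $d_i$. PBW applied to $U(H)$ (the free associative algebra on the $y_i$ with weights $d_i$) yields
\[
\prod_{n\ge 1}(1-t^n)^{\dim H_n} \;=\; 1-\sum_{i} t^{d_i},
\]
and hence the Hilbert series of $H$ has its smallest singularity at the smallest positive root $\rho_H$ of $1-\sum_i t^{d_i}=0$. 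Standard singularity analysis (the right-hand side has a simple zero at $\rho_H$) gives $\dim H_n\sim c\,\rho_H^{-n}/n$, whence $g_H(n)^{1/n}\to\beta_H:=1/\rho_H>1$, proving (i). For finitely generated $H$ the equation rewrites as $x^D-\sum_i x^{D-d_i}=0$ with $D=\max d_i$, a monic polynomial in $\Z[x]$ whose largest real root is $\beta_H$, so $\beta_H$ is an algebraic integer. The strict inequality $\beta_H<m$ for proper $H$ is immediate from Theorem~\ref{t1I}, since exponential negligibility of $g_H(n)$ against $g_L(n)\sim m^n/n$ forces $\beta_H<m$; this establishes (ii).

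For (iii) my plan is to fix the codimension-$1$ ideal $M=\ker(L\to\F,\ x_1\mapsto 1,\ x_j\mapsto 0)$, which is free by Shirshov--Witt. A direct Hilbert-series computation gives $\prod(1-t^n)^{\dim M_n}=(1-mt)/(1-t)$, so $M$ admits a free basis with exactly $m-1$ elements of every $L$-degree $d\ge 1$; concretely one can take $\{(\ad x_1)^k(x_j):j\ge 2,\ k\ge 0\}$. Given $m_0\in(1,m)$, let $t_0=1/m_0$ and apply a greedy (base-$m_0$) expansion to write $1=\sum_{d\ge 1}c_d\,t_0^d$ with each $c_d\in\{0,\dots,m-1\}$; the induction $R_{d+1}<t_0^d$ on the remainder $R_d$ keeps $c_d=\lfloor R_d/t_0^d\rfloor\le m-1$, and the total available mass $(m-1)t_0/(1-t_0)\ge 1$ (precisely when $m_0\le m$) supplies enough room. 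Choose $Y_0\subseteq Y$ containing exactly $c_d$ of the free generators of $L$-degree $d$; the subalgebra $H=\langle Y_0\rangle$ is a retract of $M$ via the unique idempotent endomorphism of $M$ sending $Y\setminus Y_0$ to zero, and by the formula from (ii), $\beta_H=1/t_0=m_0$. The endpoints $m_0=1$ (single generator) and $m_0=m$ (take $H=M$) are handled immediately.

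For (iv), I would use the same data: let $S$ be the ideal of $M$ generated by $Y\setminus Y_0$, so $S\triangleleft M\triangleleft L$ is a $2$-subideal and $M/S$ is free on the images of $Y_0$ with the same $L$-degrees used in (iii). The Hilbert-series argument from (ii) then gives $\beta_{M/S}=m_0$. Since $S\subseteq M$ one has $S\cap L^{(n)}=S\cap M^{(n)}$, and because $L/M$ is one-dimensional concentrated in degree $1$,
\[
g_{L/S}(n)\;=\;\bigl(\dim L^{(n)}-\dim M^{(n)}\bigr)+g_{M/S}(n)\;=\;1+g_{M/S}(n)\qquad (n\ge 1),
\]
so $\beta_{L/S}=\beta_{M/S}=m_0$.

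The main obstacle is the combinatorial realizability in (iii): verifying that for every real $m_0\in[1,m]$ the greedy digits $c_d$ stay within $\{0,\dots,m-1\}$ and that their generating function sums to \emph{exactly} $1$ at $t_0$, so that $M$ really has enough free generators in each degree to realize an arbitrary base-$m_0$ expansion of unity. A secondary analytic point, needed in (i) when $H$ has infinite free rank, is that the Tauberian step $g_H(n)^{1/n}\to\beta_H$ requires checking that $\phi_H(t)=\sum_i t^{d_i}$ has a unique smallest positive root with $\phi_H'(\rho_H)\neq 0$; this is what makes the growth limit exist, rather than merely the limit superior.
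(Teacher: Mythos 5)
Your proposal is correct in substance, and for (iii) and (iv) it is essentially the paper's own argument: the paper does the case $m=2$, where the ideal $M$ of Lemma \ref{pnC1} has one free generator in each degree and a greedy expansion writes $1=\sum_i k_i m_0^{-i}$ with digits $k_i\in\{0,1\}$; your version with $m-1$ generators per degree and digits in $\{0,\dots,m-1\}$ is exactly the ``quite analogous'' general case, your $S$ is the same ideal of $M$, and your identity $g_{L/S}(n)=1+g_{M/S}(n)$ is the paper's decomposition $L=H\oplus S\oplus\langle y\rangle$. (The issue you call the ``main obstacle'' is in fact settled by your own greedy induction.) For (i) and (ii) you take a genuinely different route: you read the base off the Hilbert series $1/(1-\phi_H(t))$ via PBW and singularity analysis, whereas the paper never analyzes generating functions; after the same reduction via Lemmas \ref{lGR} and \ref{pnC2}, it counts words in the free associative algebra on the abstract graded set $Y$ by the elementary recursions of Lemmas \ref{bezslov1} and \ref{rSUP}, which give matching upper and lower exponential bounds (and handle the gcd of the generator degrees), identifies the base as $z_0=\inf\{z>1:\sum_i k_i z^{-i}\le 1\}$ in Lemma \ref{ldXn}, and transfers from associative words to the Lie algebra by counting LS-words (Lemma \ref{lAtL}); algebraic integrality and uniqueness of the positive root then come from Descartes' rule, and $\beta_H<m$ from Theorem \ref{t1I}, just as in your sketch. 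Your route is more conceptual; the paper's buys uniform validity with no analyticity hypotheses.

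The one genuine overstatement is the asymptotic $\dim H_n\sim c\,\rho_H^{-n}/n$: it fails in the periodic case (all generator degrees divisible by some $\delta>1$, when $\dim H_n=0$ off the lattice $\delta\Z$), and for infinitely generated $H$ one must also argue around the degenerate possibility that the smallest positive root $\rho_H$ of $\phi_H(t)=1$ sits at the radius of convergence of $\phi_H$, where $1-\phi_H$ need not have an isolated simple dominant zero. Neither phenomenon threatens the statement, because only the base of the cumulative function $g_H(n)$ is claimed, so two-sided bounds $c(z')^n\le g_H(n)\le Cz^n$ for any $z'<1/\rho_H<z$ suffice; these are exactly what Lemmas \ref{bezslov1} and \ref{rSUP} provide, so replacing ``standard singularity analysis'' by these crude estimates (or simply quoting Lemma \ref{ldXn}) closes your argument completely. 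A minor remark: the condition $\phi_H'(\rho_H)\ne 0$ that you flag is automatic whenever $\rho_H$ lies strictly inside the disc of convergence, since $\phi_H'$ has nonnegative coefficients; the real points needing care are the two named above.
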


We do not know if for any $m_0\in[1,m]$ there is an ideal $S$ with $\beta_{L/S}=m_0$.

\medskip

To further emphasize the potential of the combinatorial approach to the questions of the growth\!/\!cogrowth, in Section \ref{sE}, as an example, we calculate the cogrowth function for the minimal 2-subideal containing one of the free generators of a free Lie algebra of rank 2.

Theorem \ref{t1I} gives a quantitative evaluation of the ``size'' of a finitely generated subalgebra $H$ when compared to the ``size'' of the whole of the free Lie algebra $L$. An example showing how the ``quantitative'' Theorems \ref{t1I} and \ref{t2I}, can be applied to obtain purely ``qualitative'' consequences, is as follows. 

\begin{Corollary}\label{c2I} No proper finitely generated subalgebra $K$ of
a free Lie algebra $L$ can contain a nonzero subideal $H$ of $L$.
\end{Corollary}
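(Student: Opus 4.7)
The plan is to derive the corollary directly from Theorems \ref{t1I} and \ref{t2I}, using the trivial dimension identity
\[
g_V(n) + g_V^{*}(n) \;=\; \dim L^{(n)} \;=\; g_L(n),
\]
where $g_V(n) = \dim(V \cap L^{(n)})$ and $g_V^{*}(n) = \dim L^{(n)}/(V\cap L^{(n)})$ denote the relative growth and cogrowth of a subspace $V \subseteq L$. The idea is that a containment $H \subseteq K$ forces the cogrowth of $K$ to be dominated by the cogrowth of $H$, so if both theorems simultaneously apply, then $g_L(n)$ is squeezed between two exponentially negligible quantities.

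Assume, toward a contradiction, that $H \subseteq K$ with $H$ a nonzero subideal and $K$ a proper finitely generated subalgebra. By Theorem \ref{t1I}, the ratio $g_L(n)/g_K(n)$ grows exponentially, so there exists $\alpha > 1$ with $\sqrt[n]{g_L(n)/g_K(n)} \to \alpha$. Since Witt's formula (\ref{en1}) yields $\sqrt[n]{g_L(n)} \to m$, we obtain
\[
\limsup_{n\to\infty} \sqrt[n]{g_K(n)} \;\le\; m/\alpha \;<\; m.
\]
Similarly, by Theorem \ref{t2I} applied to the nonzero subideal $H$, the ratio $g_L(n)/g_H^{*}(n)$ grows exponentially, so there exists $\beta > 1$ with
\[
\limsup_{n\to\infty} \sqrt[n]{g_H^{*}(n)} \;\le\; m/\beta \;<\; m.
\]

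Now from $H \cap L^{(n)} \subseteq K \cap L^{(n)}$ we get a surjection $L^{(n)}/(H\cap L^{(n)}) \twoheadrightarrow L^{(n)}/(K\cap L^{(n)})$, hence $g_K^{*}(n) \le g_H^{*}(n)$. Therefore
\[
g_L(n) \;=\; g_K(n) + g_K^{*}(n) \;\le\; g_K(n) + g_H^{*}(n).
\]
Taking $n$-th roots and passing to $\limsup$, the right-hand side is bounded above by $m/\min(\alpha,\beta) < m$, while the left-hand side tends to $m$. This contradiction proves the corollary.

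There is no genuine obstacle here: once the quantitative Theorems \ref{t1I} and \ref{t2I} are available, the argument is a one-line squeeze. The only thing to check is the direction of the containment-induced inequality between the cogrowths, which is immediate from the quotient description above.
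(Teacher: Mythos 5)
Your argument is correct and is essentially the paper's own proof: both derive the corollary by combining Theorems \ref{t1I} and \ref{t2I} with the identity $g_V(n)+g_V^{*}(n)=g_L(n)$ and the monotonicity coming from $H\subseteq K$ (the paper phrases it as ``$g_H\le g_K$, so $H$ would be negligible, contradicting Theorem \ref{t2I},'' while you equivalently use $g_K^{*}\le g_H^{*}$ and squeeze $g_L=g_K+g_K^{*}$). The only cosmetic difference is that, like the paper, you should note the trivial reduction to the case of finite rank $m\ge 2$ before invoking Witt's formula.
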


\proof We may assume that $L$ has a finite rank $m\ge 2$.
By Theorem \ref{t1I}, $K$ is negligible. If $H\subset K$, then also $H$ is negligible. However, this contradicts Theorem \ref{t2I}.
\endproof 

\medskip

Apart from the sharp difference in the character of growth/cogrowth, there are other properties where finitely generated subalgebras and subideals differ drastically. Let us recall that given a subalgebra $H$ of a Lie algebra $G$, the \textit{idealizer} of $H$ in $G$ is the unique maximal subalgebra $K$ containing $H$ as an ideal. Clearly, any proper subideal is different from its idealizer in any Lie algebra. At the same time, an easy consequence of our Theorem \ref{t1I} is the following, probably known fact: \textit{any nonzero finitely generated subalgebra of a nonabelian free Lie algebra is self-idealizing, that is, equal to its idealizer}. Using this property one could derive Corollary \ref{c2I} using a more traditional argument, like Theorem \ref{t4I} below. This result, for which we were unable to find a source in the literature, constitutes a natural counterpart to M. Hall's theorem about finitely generated subgroups in the free groups \cite{H}.

\begin{Theorem}\label{t4I}
Let $H$ be a finitely generated subalgebra of a free Lie algebra $L$ of finite rank. Then a free basis $B_0$ of $H$ can be complemented to produce a free basis $B$ of a subalgebra $C$ of finite codimension in $L$. In other words, if $K$ is the subalgebra generated by $B\setminus B_0$ then $C\cong H\ast K$, the free product of $H$ and $K$. 

One can always choose $B\setminus B_0$, hence $K$, homogeneous. If $H$ is itself homogeneous, one can choose $B$ so that $C$ contains the $t^\mathrm{th}$ term of the lower central series $L^t$, for some $t$.
\end{Theorem}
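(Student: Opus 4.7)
The plan is to treat the homogeneous case first (which yields the extra conclusion $C \supseteq L^{t}$) and then to reduce the general case to it by passing to leading terms. By Shirshov--Witt $H$ is free, and finite generation forces $r := \dim H/[H,H] < \infty$, so $H$ admits a finite free basis $B_{0} = \{h_{1}, \ldots, h_{r}\}$; when $H$ is homogeneous this basis may be chosen homogeneous of degrees bounded by some $d$.

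In the homogeneous case, fix $t > d$ and set $C := H + L^{t}$. Since $L^{t}$ is an ideal of $L$, $C$ is a subalgebra containing $L^{t}$, and $L/C$ is a quotient of the finite-dimensional nilpotent algebra $L/L^{t}$, so $C$ has finite codimension. The crucial step is the identity $H \cap [C,C] = [H,H]$: writing $h \in H \cap [C,C]$ as $\sum [h'_{i} + \ell_{i},\, h''_{i} + \ell'_{i}]$ with $h'_{i}, h''_{i} \in H$ and $\ell_{i}, \ell'_{i} \in L^{t}$, all mixed terms together with the $[L^{t}, L^{t}]$-terms lie in $L^{t+1}$, so $h \equiv \alpha \pmod{L^{t+1}}$ for some $\alpha \in [H, H]$. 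Decomposing into homogeneous components: each component of degree $\le t$ must equal the corresponding part of $\alpha$ (since $L^{t+1}$ vanishes in such degrees), while any homogeneous component of $H$ of degree $> d$ lies automatically in $[H, H]$ (as the generators $h_{i}$ have degree $\le d$). Thus $h \in [H, H]$, and the natural map $H/[H,H] \hookrightarrow C/[C,C]$ is injective. Extending the image of $H/[H,H]$ by a homogeneous basis $\{\bar b_{j}\}$ of $C/[C,C]$—necessarily of degree $\ge t$, hence lifting to homogeneous $b_{j} \in L^{t}$—and invoking a Shirshov--Witt-style construction of a free basis for $C$ adapted to a graded vector-space complement of $C$ in $L$ compatible with $B_{0}$ (essentially a Lazard elimination exploiting the ideal structure of $L^{t} \triangleleft C$), we obtain a free basis $B = B_{0} \cup \{b_{j}\}$ of $C$, yielding $C = H \ast K$ with $K = \langle b_{j} \rangle$.

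For the general case, take any finite free basis $B_{0}$ of $H$ and let $\hat h_{i}$ denote the top-degree homogeneous component of $h_{i}$. A Gröbner--Shirshov (Nielsen-type) reduction of $B_{0}$—iteratively replacing basis elements by Lie polynomials in the others to eliminate any relation among the leading terms—produces a new finite free basis of $H$ whose leading terms $\{\hat h_{1}, \ldots, \hat h_{r}\}$ themselves freely generate a homogeneous subalgebra $\hat H$. Applying the homogeneous case to $\hat H$ yields homogeneous $\hat b_{j}$ such that $\{\hat h_{i}\} \cup \{\hat b_{j}\}$ is a free basis of $\hat C := \hat H + L^{t}$. Setting $B := B_{0} \cup \{\hat b_{j}\}$, the leading-term set of $B$ is free, so by the standard leading-term lemma $B$ itself freely generates the subalgebra $C := \langle B \rangle$, giving $C = H \ast K$ with $K = \langle \hat b_{j} \rangle$. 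A descending induction on degree shows $L = C + L^{(t-1)}$, so $C$ has finite codimension.

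The two principal technical hurdles are (i) the Gröbner--Shirshov reduction of $B_{0}$ to a basis with free leading terms, whose termination requires Shirshov's composition-diamond lemma; and (ii) in the homogeneous case, showing that the lifts $b_{j}$ actually extend $B_{0}$ to a \emph{free} basis of $C$—not automatic, since a lift of an abelianization basis need not even generate the ambient algebra (witness $\langle x, y+[x,y]\rangle \subsetneq L(x,y)$), but guaranteed here by the Shirshov--Witt construction applied relative to the ideal $L^{t} \triangleleft C$.
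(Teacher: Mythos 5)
Your overall route---reduce the general case to the homogeneous one by passing to a Nielsen/Shirshov-reduced (irreducible) free basis and its leading parts, then treat a homogeneous $H$ directly---is essentially the paper's strategy, and your non-homogeneous reduction is sound: the leading-part set of $B_0\cup\{\hat b_j\}$ is independent, hence $B$ is independent, and $L=C+L^{(t-1)}$ gives finite codimension (this is the route through Lemmas \ref{pnC0}, \ref{pnC2} and \ref{lGR}; incidentally what you need for the reduction is Shirshov's irreducible-generating-set lemma rather than the composition--diamond lemma). The genuine gap is in the homogeneous case, at exactly the point you flag as hurdle (ii) and then dismiss: you must prove that $B=B_0\cup\{b_j\}$ actually \emph{generates} $C=H+L^{t}$, since only a generating set that is independent modulo $[C,C]$ is a free basis. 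Your own example $\langle x,\,y+[x,y]\rangle\subsetneq L(x,y)$ shows that lifting a basis of the abelianization need not generate, and the appeal to a ``Shirshov--Witt construction applied relative to the ideal $L^{t}\triangleleft C$'' or to Lazard elimination is not an argument: Lazard elimination (Lemma \ref{pnC1}) presupposes a free generating set already split into two parts, which is precisely what you are trying to produce. Your identity $H\cap[C,C]=[H,H]$ is correct, but it only gives injectivity of $H/[H,H]\to C/[C,C]$, i.e.\ independence of the images of $B$; the generation statement is simply asserted.

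The gap can be closed, because your $B$ is homogeneous: $C$ is graded, the images of $B$ span $C/[C,C]$, and an induction on degree (a graded Nakayama-type argument) gives $\langle B\rangle=C$---a homogeneous $c\in C_e$ is a linear combination of elements of $B$ of degree $e$ plus a sum of brackets $[a_i,b_i]$ with $a_i,b_i\in C$ homogeneous of degrees strictly less than $e$, which lie in $\langle B\rangle$ by induction---after which the criterion ``generates and is independent modulo the commutator subalgebra'' (\cite[Theorem 4.2.11]{B}, the same criterion the paper invokes) yields that $B$ is a free basis and $C=H\ast K$. It is worth noting that the paper organizes the homogeneous case so that this issue never arises: it enlarges $H$ degree by degree, adjoining in each homogeneous component a linear basis of a complement, so generation of each intermediate subalgebra holds by construction and only the independence modulo the commutator subalgebra requires a degree argument (via $[H_1,H_1]\subset[H,H]\oplus L^{t+1}$, the analogue of your computation). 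So your decomposition $C=H+L^{t}$ in one step is a legitimate variant, but as written the decisive step of the theorem is missing.
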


From this theorem we easily obtain the following consequence.

\begin{Corollary}\label{c4I}
Every proper finitely generated subalgebra $H$ in a free Lie algebra $L$ of finite rank is a free factor in strictly greater subalgebra of $L$; in addition, if homogeneous, $H$ is a free factor of a \emph{subideal} of $L$.
\end{Corollary}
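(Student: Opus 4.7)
The plan is to deduce Corollary \ref{c4I} almost directly from Theorem \ref{t4I}, invoking Theorem \ref{t1I} only to guarantee that the surrounding subalgebra is strictly larger than $H$. The case $H=0$ is trivial (every generator $x\in X$ yields the free factorization $0\ast\langle x\rangle$ inside a proper subalgebra, and $0$ lies in every subideal), so I will assume $H\neq 0$.

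First, I apply Theorem \ref{t4I} to obtain a free basis $B$ extending a free basis $B_0$ of $H$ such that the subalgebra $C$ generated by $B$ has finite codimension in $L$ and admits the free product decomposition $C\cong H\ast K$, where $K$ is freely generated by $B\setminus B_0$. This already exhibits $H$ as a free factor of $C$. To see that the inclusion $H\subseteq C$ is strict, I use Theorem \ref{t1I}: the relative growth $g_H$ is exponentially negligible compared to the (exponentially growing) function $g_L$, so the difference $g_L(n)-g_H(n)$ itself tends to infinity exponentially. Hence $H$ has infinite codimension in $L$, while $C$ has finite codimension, and therefore $H\subsetneq C$. This proves the first assertion.

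For the second assertion, I use the last sentence of Theorem \ref{t4I}: when $H$ is homogeneous, $B$ can be chosen so that the $t\th$ term $L^t$ of the lower central series of $L$ lies inside $C$ for some $t\ge 1$. Then $\bar C:=C/L^t$ is a subalgebra of the nilpotent quotient $\bar L:=L/L^t$. The remaining ingredient is the elementary fact that every subalgebra of a nilpotent Lie algebra is a subideal: if $M\subsetneq N$ with $N$ nilpotent, then the normalizer of $M$ in $N$ strictly contains $M$, so iterating the normalizer construction terminates at $N$ in finitely many steps, producing a chain $M=M_0\triangleleft M_1\triangleleft\cdots\triangleleft M_s=N$. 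Applied to $\bar C\subseteq\bar L$ and then lifted to $L$ through the projection $L\to L/L^t$ (the correspondence theorem works because every term of the chain contains $L^t$), this yields a chain of ideals realizing $C$ as a subideal of $L$. Combined with $C\cong H\ast K$, this shows $H$ is a free factor of the subideal $C$, as required.

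There is no genuinely hard step: Theorem \ref{t4I} packages all the nontrivial combinatorics, Theorem \ref{t1I} is invoked only to exclude the degenerate possibility $C=H$, and the nilpotent-implies-subideal observation is standard. The only point requiring care is recognizing that exponential negligibility of $g_H$ forces $H$ to have infinite codimension in $L$, which is precisely what makes the enlargement $C\supsetneq H$ nontrivial.
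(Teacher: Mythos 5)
Your argument is correct and is essentially the route the paper intends (it states the corollary as an easy consequence of Theorem \ref{t4I}): that theorem gives $C\cong H\ast K$ of finite codimension, Theorem \ref{t1I} (via the observation that a proper finitely generated subalgebra cannot have finite codimension) forces $H\subsetneq C$, and in the homogeneous case $L^t\subseteq C$ yields the subideal property. Your passage through the nilpotent quotient $L/L^t$ with idealizer iteration is just a harmless variant of the direct chain $C=C+L^t\triangleleft C+L^{t-1}\triangleleft\cdots\triangleleft C+L^1=L$.
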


A comparison with Group Theory reveals the following differences. First, in Hall's Theorem \cite{H}, saying that a finitely generated subgroup $H$ of a free group $F_r$ of rank $r$ is a free factor in a greater subgroup of $F_r$, one must stipulate that $H$ is of \textit{infinite index} in $F_r$. Second, in the case of Lie algebras, if $H$ is proper nonzero, the free factor $K$ appearing in Theorem \ref{t4I} can never be chosen finitely generated; whereas in the case of Groups, Hall's Theorem asserts that the complementary free factor $K$ is \textit{always finitely generated.}

When dealing with the cogrowth of subideals,  the following situation comes to mind quite naturally. If a subideal $ S $ contained a nonzero ideal $ I $ then the cogrowth of $ S $ would be exponentially negligible, without any further argument.  The result that follows shows that this such situation is highly unlikely.

\begin{Theorem}\label{t0601} Let $J$ be a proper ideal of a free Lie algebra $L$ and $S$ a finite subset of $J$. Then the ideal closure $I$ of $S$ in $J$ does not contain nonzero ideals of $L$.
\end{Theorem}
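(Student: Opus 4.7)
The plan is to argue by contradiction: I will suppose that $I_0$ is a nonzero ideal of $L$ contained in $I$, and work toward showing $I_0 \subseteq J^n$ for every $n \geq 1$, where $J^1 = J$ and $J^{n+1} = [J, J^n]$ is the lower central series of $J$. Since $J$ is free by Shirshov--Witt and hence residually nilpotent, $\bigcap_{n \geq 1} J^n = 0$, which will yield $I_0 = 0$, the desired contradiction.

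The base case $I_0 \subseteq [J, J]$ should come from an abelianization argument. Set $\bar J = J/[J, J]$ and $\mathfrak{g} = L/J$; a short Jacobi calculation shows $[J, J]$ is in fact an ideal of $L$, so $\bar J$ is a $U(\mathfrak{g})$-module with $J$ acting trivially. Because $L$ is free and $J$ is proper, $\mathfrak{g} \neq 0$ and $U(\mathfrak{g})$ is an infinite-dimensional integral domain. The Lie-algebra analogue of the Fox calculus provides a short exact sequence of $U(\mathfrak{g})$-modules
\[
0 \longrightarrow \bar J \longrightarrow U(\mathfrak{g})^{\,m} \xrightarrow{\ \epsilon\ } U(\mathfrak{g})_+ \longrightarrow 0, \qquad \epsilon\bigl((u_i)\bigr) = \sum_{i=1}^m u_i \bar x_i,
\]
embedding $\bar J$ into a free $U(\mathfrak{g})$-module. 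In particular $\bar J$ is torsion-free, and a short argument (a finite-dimensional $U(\mathfrak{g})$-submodule would, for any $0 \neq \xi \in \mathfrak{g}$, be annihilated by a nonzero polynomial in $\xi$, contradicting that $U(\mathfrak{g})$ is a domain) shows that $\bar J$ contains no nonzero finite-dimensional $U(\mathfrak{g})$-submodule. Now $I = \mathrm{span}_k(S) + [J, I]$ and $[J, I] \subseteq [J, J]$ is killed in $\bar J$, so the image $\bar I$ of $I$ equals $\mathrm{span}_k(\bar S)$, finite-dimensional because $S$ is finite. The image $\bar{I_0}$ of $I_0$ in $\bar J$ is a $U(\mathfrak{g})$-submodule of $\bar J$ (by $L$-invariance of $I_0$) contained in $\bar I$, hence finite-dimensional, hence zero. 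This gives $I_0 \subseteq [J, J] = J^2$.

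The main obstacle is the inductive step: from $I_0 \subseteq J^n$ I want to derive $I_0 \subseteq J^{n+1}$. The natural move is to replace $\bar J$ by the $U(\mathfrak{g})$-module $J^n/J^{n+1}$ (on which $J$ still acts trivially), identified via $\mathrm{gr}(J)$ with the $n$-th homogeneous component of the free Lie algebra on a basis of $\bar J$. The trouble is that the image of $I$ in $J^n/J^{n+1}$ is \emph{not} finite-dimensional once $n \geq 2$ --- already $[J, s] \bmod J^3$ spans an infinite-dimensional subspace of $J^2/J^3$ whenever $\bar s \neq 0$ --- so the base-case argument does not transfer verbatim. Instead one must combine the $L$-invariance of $I_0$ with the structural description of the graded pieces (sitting inside tensor and exterior powers of the free module $U(\mathfrak{g})^{\,m}$) to argue that, even though the image of $I$ is large, its largest $U(\mathfrak{g})$-submodule is zero. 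This delicate step is the heart of the theorem and, I expect, is where the cogrowth machinery developed in Section~\ref{sCGW} becomes essential.
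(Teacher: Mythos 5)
There is a genuine gap, and you have named it yourself: your argument only establishes the base case $I_0\subseteq[J,J]$, while the inductive step $I_0\subseteq J^n\Rightarrow I_0\subseteq J^{n+1}$ --- which you correctly identify as the heart of the matter --- is not supplied. The base case is fine (the relation-module sequence $0\to J/[J,J]\to U(\mathfrak{g})^m\to U(\mathfrak{g})_+\to 0$ is the standard Lie analogue of the Magnus/Fox embedding, and your torsion-freeness argument does kill any finite-dimensional submodule), but the finite-dimensionality of the image of $I$ is exactly what is lost in $J^n/J^{n+1}$ for $n\ge 2$, and no replacement argument is offered; ``combine $L$-invariance with the structure of the graded pieces'' is a hope, not a proof. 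Your closing guess is also off target: the cogrowth machinery of Section \ref{sCGW} plays no role in this theorem.

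For comparison, the paper's route is quite different and avoids the lower central series of $J$ altogether. One first proves Theorem \ref{t3}: after reducing to the case where $L=\langle J,z\rangle$ with $J$ of codimension $1$ and $z$ a free generator of $L$, Lazard elimination (Lemma \ref{pnC1}) exhibits free generators $[y,z,\ldots,z]$ of $J$ on which the inner derivation $\mathrm{ad}\,z$ acts as a shift; the combinatorial Lemma \ref{lInvDer} (with its multi-orbit variant Lemma \ref{lInvDerMO}) then shows that for any nonzero $a\in J$ some iterate $[a,z,\ldots,z]$ escapes the ideal of $J$ generated by any finite set of these generators, hence escapes $I=\mathrm{id}_J S$. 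If $I$ contained a nonzero ideal $I_0$ of $L$, then for $0\neq a\in I_0$ all the elements $[a,z,\ldots,z]$ would stay in $I_0\subseteq I$, a contradiction; this is how Theorem \ref{t0601} follows. If you want to salvage your approach you would need a genuinely new idea for the inductive step (or a different filtration in which the image of $I$ stays ``small''), and I do not see one coming from the material in Section \ref{sCGW}; the relevant technique in the paper is the shifting-derivation lemma of Section \ref{sTIS}.
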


To state the corollaries of this theorem, we will use the following notation. Given a subset $S$ of a Lie algebra $L$, we denote by $\mathrm{id}_L S$, the ideal of $L$ generated by $S$. This is the minimal ideal of $L$ that contains $S$. We also say that $\mathrm{id}_L S$ is an \textit{ideal closure} of $S$ in $L$. If $\ell$  is a natural number, then an $\ell$-subideal of $L$ generated by $S$ is the subalgebra $\mathrm{id}^\ell_L S$ of $L$ which is defined by induction as follows. We set $\mathrm{id}^1_L S=\mathrm{id}_L S$ and if $\ell>1$ then define $\mathrm{id}^\ell_L S$ as the ideal of $\mathrm{id}^{\ell-1}_L S$ generated by $S$. An easy inductive argument shows that $\mathrm{id}^\ell_L S$ is contained in every $\ell$-subideal of $L$ containing $S$. We also say that $\mathrm{id}^\ell_L S$ is an \textit{$\ell$-subideal closure} of $S$ in $L$.

\begin{Corollary}\label{t3I1}Let $L$ be a free Lie algebra, $J$ a proper ideal of $L$, $S$ a finite subset of $J$. Then for no $\ell\ge1$ the $\ell$-subideal closure $\mathrm{id}_J^{\ell}S$ contains a nonzero $\ell$-subideal of $L$. 
\end{Corollary}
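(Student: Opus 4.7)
The plan is to argue by induction on $\ell$. The base case $\ell = 1$ is precisely Theorem~\ref{t0601}, since a 1-subideal of $L$ is an ideal of $L$ and $\mathrm{id}_J^1 S$ is the ideal closure $\mathrm{id}_J S$ of $S$ in $J$.

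For the inductive step with $\ell \geq 2$, suppose for contradiction that $T \subseteq \mathrm{id}_J^\ell S$ is a nonzero $\ell$-subideal of $L$ realized by a chain $L = L_0 \triangleright L_1 \triangleright \ldots \triangleright L_{\ell-1} \triangleright T$. I may assume each inclusion strict: a collapse at any level would realize $T$ as a $k$-subideal with $k < \ell$ lying in $\mathrm{id}_J^\ell S \subseteq \mathrm{id}_J^k S$, contradicting the inductive hypothesis. Next I preprocess the chain by replacing each $L_i$ (for $1 \le i \le \ell-1$) by $L_i \cap J$. Since $L_i$ and $J$ are both ideals of $L$ their intersection is an ideal of $L$, and a short check using $[L_{i-1}, L_i] \subseteq L_i$ and $[J, J] \subseteq J$ shows $L_i \cap J$ remains an ideal of $L_{i-1} \cap J$. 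The element $T$ still sits at the bottom because $T \subseteq M_\ell \subseteq J$. After discarding any new repetitions (again handled by the inductive hypothesis), I may therefore assume $L_1 \subseteq J$. By Shirshov--Witt $L_1$ is itself a free Lie algebra.

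The central step is to apply the inductive hypothesis inside $L_1$. In the principal case $S \subseteq L_1$, set $M_1 = \mathrm{id}_J S$; since $S \subseteq L_1$ and $L_1$ is $L$-invariant, $M_1 \subseteq \mathrm{id}_L S \subseteq L_1$, and the inclusion $L_1 \subseteq J$ gives $[L_1, M_1] \subseteq [J, M_1] \subseteq M_1$, so $M_1$ is an ideal of $L_1$. It is proper in $L_1$: if $M_1$ equaled $L_1$, then $L_1$ would be a nonzero ideal of $L$ inside $M_1$, violating Theorem~\ref{t0601}. A routine induction yields $\mathrm{id}_{M_1}^i S = \mathrm{id}_J^{i+1} S$ for every $i \ge 0$, so $T \subseteq M_\ell = \mathrm{id}_{M_1}^{\ell-1} S$. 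The truncated chain $L_1 \triangleright L_2 \triangleright \ldots \triangleright L_{\ell-1} \triangleright T$ exhibits $T$ as a nonzero $(\ell-1)$-subideal of $L_1$, and the inductive hypothesis applied to the triple $(L_1, M_1, S)$ at level $\ell-1$ delivers the contradiction.

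The main obstacle is the remaining possibility that $S \not\subseteq L_1$ even after preprocessing. In this case $M_1$ is not contained in $L_1$ and the clean identification $\mathrm{id}_{M_1}^{\ell-1} S = M_\ell$ fails. I expect this case to be settled by replacing $S$ by an appropriate finite ``projection'' $S' \subseteq L_1$, built from $S \cap L_1$ together with finitely many brackets $[a,s]$ for $s \in S \setminus L_1$ and $a$ in a suitable finite generating set, chosen so that $S'$ still generates, inside $L_1$ relative to the proper ideal $L_1 \cap M_1$, an $(\ell-1)$-subideal closure containing $T$; the contradiction then follows from the inductive hypothesis as before. Verifying finiteness of $S'$ and the required containment of $T$ is the technical heart of the argument.
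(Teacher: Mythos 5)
Your induction framework, the reduction to a chain with $L_1\subseteq J$, and the ``principal case'' $S\subseteq L_1$ (where $\mathrm{id}_{M_1}^{\,i}S=\mathrm{id}_J^{\,i+1}S$ and the inductive hypothesis applied to the triple $(L_1,M_1,S)$ finishes the job) are all sound, modulo the slip that $L_i$ for $i\ge 2$ is not an ideal of $L$ (your actual check, that $L_i\cap J$ is an ideal of $L_{i-1}\cap J$, is what matters and is fine). But the argument has a genuine, and admitted, gap: the case $S\not\subseteq L_1$, which is the generic situation since $S$ is a fixed finite set and $L_1$ is whatever ideal the hypothetical subideal chain hands you, is not proved. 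Your sketch for it --- replace $S$ by a finite ``projection'' $S'\subseteq L_1$ built from $S\cap L_1$ and finitely many brackets $[a,s]$ so that $\mathrm{id}_{L_1\cap M_1}^{\,\ell-1}S'$ still contains $T$ --- is exactly the statement that needs proof, and there is no evident reason such a finite $S'$ exists: the containment $T\subseteq\mathrm{id}_J^{\,\ell}S$ is produced by brackets of elements of $S$ with arbitrary elements of $J$ and of the successive closures, and $J$, $L_1$, $M_1$ are in general infinitely generated, so no finite set of brackets obviously captures enough of $\mathrm{id}_J^{\,\ell}S$ inside $L_1$. As written, the ``technical heart'' is missing, and it is not a routine verification.

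It is worth noting that the paper's proof avoids ever having to force $S$ into a term of the chain. Writing $J(i)=\mathrm{id}_J^{\,i}S$ and $L\supset M_1\supset\cdots\supset M_\ell$ for the subideal chain, one first intersects the chain with the $J(i)$ (using that two nonzero ideals of a free Lie algebra meet nontrivially, since a free algebra has no commuting linearly independent pair), so one may assume $M_i\subseteq J(i-1)$ for all $i$; then one takes $\ell$ minimal with $M_\ell\subseteq J(\ell)$, picks $z\in M_{\ell-1}\setminus J(\ell-1)$, and checks that the ideal closure $R$ of $M_\ell$ in $J(\ell-1)$ satisfies $[z,R]\subseteq R$, so $R$ is a nonzero ideal of the free subalgebra $P=\langle J(\ell-1),z\rangle$ contained in $J(\ell)=\mathrm{id}_{J(\ell-1)}S$; Theorem \ref{t0601} applied to $P$, its proper ideal $J(\ell-1)$, and the same finite set $S$ gives the contradiction. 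The key point is that $S$ only needs to lie in $J(\ell-1)$, which it does by construction --- precisely the feature your descent into $L_1$ lacks. Either adopt this route or supply a genuine proof of your projection step; without one the corollary is not established.
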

 
\begin{Corollary}\label{t3I}
Let $L$ be a free Lie algebra and $S$ a finite subset of $L$ whose ideal closure in $L$ is different from $L$. Then for any $\ell\ge 1$,  the $(\ell+1)\st$ subideal closure of $S$ in $L$ does not contain any nonzero $\ell$-subideal of L.	
\end{Corollary}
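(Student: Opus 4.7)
The plan is to reduce Corollary \ref{t3I} to Corollary \ref{t3I1} by a bookkeeping identification of iterated subideal closures. Set $J = \mathrm{id}_L S$. By the hypothesis that the ideal closure of $S$ in $L$ is different from $L$, we have $J \neq L$, so $J$ is a proper ideal of $L$ with $S \subset J$; hence the pair $(J, S)$ satisfies the hypothesis of Corollary \ref{t3I1}.

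The key step is to verify the identity $\mathrm{id}^{\ell+1}_L S = \mathrm{id}^{\ell}_J S$ for every $\ell \ge 1$, which I would prove by induction on $\ell$. For $\ell = 1$, the left-hand side $\mathrm{id}^2_L S$ is by definition the ideal of $\mathrm{id}^1_L S = J$ generated by $S$, and this is precisely $\mathrm{id}^1_J S = \mathrm{id}_J S$. For the inductive step, $\mathrm{id}^{\ell+1}_L S$ is the ideal of $\mathrm{id}^{\ell}_L S$ generated by $S$; the inductive hypothesis gives $\mathrm{id}^{\ell}_L S = \mathrm{id}^{\ell-1}_J S$, so this coincides with the ideal of $\mathrm{id}^{\ell-1}_J S$ generated by $S$, which is $\mathrm{id}^{\ell}_J S$ by definition.

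Now Corollary \ref{t3I1} applied to $(J, S)$ with the given $\ell$ asserts exactly that $\mathrm{id}^{\ell}_J S$ contains no nonzero $\ell$-subideal of $L$; substituting the identity above yields the desired conclusion that $\mathrm{id}^{\ell+1}_L S$ contains no nonzero $\ell$-subideal of $L$. The substantive content is entirely packaged in Corollary \ref{t3I1}, which itself rests on Theorem \ref{t0601}; the reduction from \ref{t3I1} to \ref{t3I} is merely a reindexing of nested ideal-in-ideal constructions. Consequently the genuine obstacle does not lie in this corollary at all but in the underlying Theorem \ref{t0601}; the only care required here is to confirm that iterating the ideal-in-ideal construction inside $L$ beginning at step $\ell = 1$ agrees one step later with the iteration performed inside $J$ starting from $S$ itself, which is exactly what the induction above records.
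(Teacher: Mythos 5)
Your reduction is correct and is essentially the argument the paper intends (the paper leaves the derivation of Corollary \ref{t3I} implicit): taking $J=\mathrm{id}_L S$, which is a proper ideal containing $S$, the inductive identification $\mathrm{id}^{\ell+1}_L S=\mathrm{id}^{\ell}_J S$ is exactly what is needed to read off the statement from Corollary \ref{t3I1}. Nothing is missing.
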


The restriction $\mathrm{id}_L(S)\ne L$ in the statement of the above result is necessary because, otherwise, for every $\ell=1,2,\ldots$, the $\ell$-subideal closure of $S$  equals $L$ and the claim is not valid.

\medskip
We conclude this section with few remarks about the differences of the situation with the growth and cogrowth in the case of algebras as opposed to the case of groups. We will also give some hints as for the methods we have used or developed. 

\medskip

Two observations make the situation in the case of algebras so different from the case of groups (and semigroups!). In the case of algebras, we measure the growth using the \textit{degree} of the elements while in the case of groups we deal with the \textit{length}. So our first observation is that the degree of the sum of two elements never exceeds the degree of the summands, while the length of the product does not need to obey this rule.  As for the second, the reader probably already noticed that in the proof of Corollary \ref{c2I} we have used a simple general purely ``algebraic'' property: \textit{The sum of the (relative) growth and cogrowth functions of a subalgebra always equals to the growth function of the whole algebra}. Again, no analogue of this claim holds valid in the case of groups. As a result, dealing with the growth and cogrowth functions is by necessity more delicate in the case of groups.

Still, to prove our results about the growth of subalgebras in the free Lie algebras here, it was necessary to suggest at least two pieces of technique we think new. 

One of them, suggested in Section \ref{sCS}, deals with the following issue. Alhough we define the growth functions for subalgebras in the free Lie algebra $L=L(X)$ using a finite fixed free basis $X$, in our proofs we need to switch from this fixed basis to other bases, which could be by necessity infinite. To preserve growth functions, we need to develop an approach to \textit{the growth with respect to infinite  graded bases} of free Lie algebras. This is done in Section \ref{sCS} where certain analytic finiteness conditions are suggested that allow us to estimate the cogrowth when we use induction to pass to infinitely generated ideals and subideals and at the same time have the degrees of elements unchanged. The second piece of technique is produced in Lemma \ref{lInvDer} where we examine what can be naturally called \textit{shifting derivations} in associative and Lie algebras.

\section{Preliminaries}\label{sP}

In this section we give necessary definitions concerning the relative growth of subspaces in algebras. We also recall some results about the connection between associative and Lie algebras, bases of free Lie algebras and their subalgebras.

\subsection{Relative growth of  subspaces in finitely filtered spaces}\label{ssRG}
Let $V$ be a vector space over a field $F$ with the filtration
 
\begin{equation}\label{eF}
\alpha:\quad \{0\}\subset V^{(0)}\subset V^{(1)}\subset\ldots\subset V^{(n)}\subset\ldots,\qquad V=\bigcup_{n=0}^\infty V^{(n)},  
\end{equation}

We say that $\alpha$ is a \textit{finite ascending filtration}
if all subspaces $V^{(n)}$ are finite-dimensional. The\textit{ growth function} of $\alpha$, $g_\alpha$, is then given by $g_\alpha (n)=\dim V^{(n)} $. The\textit{ graded growth function} of $\alpha$, $d_\alpha$,  is given by $d_\alpha(n)=\dim V^{(n)}/ V^{(n-1)}$ (assuming $V^{(-1)}=\{ 0\}$). The filtration $\alpha$ also defines a degree function $\deg_\alpha$ on the set of nonzero elements of $L$ given by $\deg_\alpha(a)=n$ provided that $a\in  V^{(n)}\setminus V^{(n-1)}$. 

If $W$ is subspace then the \textit{subfiltration} $\alpha\cap W$ is a filtration formed by the subspaces $W^{(n)}=W\cap V^{(n)}$. The factor-space $V/W$ acquires a \textit{factor-filtration} $\alpha/W$ given by  the subspaces $(V/W)^{(n)}=(V^{(n)}+W)/W\cong V^{(n)}/(W\cap V^{(n)})$. 

In each of these cases the filtrations of the respective spaces are finite. Once $\alpha$ and $g_\alpha$ are fixed, we can define the \textit{growth function relative to $\alpha$}: $g_{\alpha\cap W}(n)=\dim W^{(n)}$, and \textit{cogrowth function relative to $\alpha$}: $g_{\alpha/W}(n)=\dim (V/W)^{(n)}$. If the filtrations are fixed than we denote the respective growth functions as $g_V$, $g_W$ and $g_{V/W}$.

In each of these cases we have also \textit{relative graded functions} $d_V$, $d_W$ and $d_{V/W}$. Say, $d_{V/W}(n)=g_{V/W}(n)-g_{V/W}(n-1)$. 

It often so happens that the space $V$ is $\Z$-graded: $V=V_0\oplus V_1\oplus V_2\oplus\cdots\oplus V_n\oplus\cdots$, and for each term of the filtration $\alpha$ one has $V^{(n)}=V_0\oplus V_1\oplus V_2\oplus\cdots\oplus V_n$, for all $n=1, 2,\ldots$. The nonzero elements of $V_n$ are called \textit{homogeneous of degree} $n$ and $V_n$ the \textit{homogeneous subspace of the grading of degree} $n$. If $v\in V$ then $v$ can be uniquely written as the sum of some elements $v_n\in V_n$. We call $v_n$ a homogeneous components of $v$ of degree $n$. If $n$ is the greatest such that $v_n\ne 0$ then we call $v_n$ the \textit{leading part} of $u$ and write $v_n=\mathrm{Lp}(u)$. 

A subspace $U\subset V$ is called \textit{graded} if $U$ contains all homogeneous components of its elements. Equivalently, $U=(U\cap V_0)\oplus (U\cap V_1)\oplus (U\cap V_2)\oplus\cdots\oplus (U\cap V_n)\oplus\cdots$. If $W$ is not graded then the linear span of the leading parts of nonzero elements of $W$ is a graded subspace of $V$, which is denoted by $\gr W$. For each natural $n$, the homogeneous component $(\gr W)_n$ of this space is the linear span of the leading parts of all elements of degree $n$ in $W$.

An easy but useful technical result is the following.

\begin{Lemma}\label{lGR} For the relative growth functions, one has 
\begin{enumerate}
\item[$\mathrm{(a)}$] $g_W=g_{\emph{\gr} W}$;
\item[$\mathrm{(b)}$] $g_{V/W}=g_{V/\emph{\gr} W}$.
\end{enumerate}
\end{Lemma}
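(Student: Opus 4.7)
The plan is to prove part (a) directly by comparing graded pieces, and then deduce (b) from (a) almost for free using the additivity of dimension.

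First I would observe that (b) reduces to (a). Since $V^{(n)}$ is finite-dimensional and
\[
\dim (V/W)^{(n)} = \dim V^{(n)} - \dim (W \cap V^{(n)}),
\]
with the analogous identity holding for $\gr W$ in place of $W$, an equality $g_W(n) = g_{\gr W}(n)$ immediately yields $g_{V/W}(n) = g_{V/\gr W}(n)$. So it suffices to focus on (a).

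For (a), the key idea is to compare the filtration $W^{(n)} = W \cap V^{(n)}$ of $W$ to the grading of $\gr W$ piece by piece. I would define, for each $n \geq 0$, the linear map
\[
\varphi_n : W^{(n)}/W^{(n-1)} \longrightarrow V_n
\]
that sends the class of a nonzero $w \in W^{(n)}$ to its homogeneous component of degree $n$ (which, when $w \notin W^{(n-1)}$, is exactly the leading part $\mathrm{Lp}(w)$). The map $\varphi_n$ is well-defined because any $w \in W^{(n-1)}$ has zero degree-$n$ component. It is injective because if $\varphi_n(w + W^{(n-1)}) = 0$ then $w$ has no degree-$n$ component, hence $w \in V^{(n-1)} \cap W = W^{(n-1)}$. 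By the very definition of the leading-part span, the image of $\varphi_n$ is precisely $(\gr W)_n$. Therefore
\[
\dim \bigl(W^{(n)}/W^{(n-1)}\bigr) = \dim (\gr W)_n.
\]

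Finally, I would sum these equalities telescopically over $k = 0, 1, \dots, n$ to obtain
\[
g_W(n) = \dim W^{(n)} = \sum_{k=0}^n \dim (\gr W)_k = \dim \bigl((\gr W) \cap V^{(n)}\bigr) = g_{\gr W}(n),
\]
where in the last step I use that $\gr W$ is a graded subspace and therefore $(\gr W) \cap V^{(n)} = \bigoplus_{k \leq n}(\gr W)_k$. This closes (a), and (b) follows by the reduction above.

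I do not anticipate a genuine obstacle here: the only subtlety is making sure $\varphi_n$ is well-defined and injective, which just amounts to noting that an element of $W$ lies in $V^{(n-1)}$ precisely when its top component vanishes. This is the standard associated-graded trick, and the statement is really an instance of the fact that passing to $\gr W$ preserves the Hilbert function of a filtered subspace.
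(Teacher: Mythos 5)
Your proof is correct and follows essentially the same route as the paper: the paper also establishes (a) via the linear isomorphism $W^{(n)}/W^{(n-1)}\to(\gr W)_n$ given by taking leading parts (your $\varphi_n$ is the same map, phrased as the degree-$n$ component), sums dimensions, and deduces (b) from (a) by the identity $\dim V^{(n)}/W^{(n)}=\dim V^{(n)}-g_W(n)$. No gaps.
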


\proof
 (a) It follows from the definition of $(\gr W)_n$ that the mapping
$w+W^{(n-1)}\mapsto \mathrm{Lp}(w)$ for $w\in W^{(n)}\setminus W^{(n-1)}$ and $w\mapsto 0$ for $w\in W^{(n-1)}$ is a well- defined linear isomorphism between $W^{(n)}/W^{(n-1)}$ and $(\gr W)_n$.
Hence
  
\begin{displaymath}
g_W(n)=\dim W^{(n)}=\sum_{i=0}^n \dim  W^{(i)}/W^{(i-1)}=\sum_{i=0}^n \dim\:(\gr W)_i = g_{\gr W}(n).
\end{displaymath}

(b) Follows from (a) and  the equality  $\dim V^{(n)}/W^{(n)}=\dim V^{(n)}-g_W(n)$.
\endproof

\subsection{Linear and free bases of subalgebras in free Lie algebras}\label{ssRLFB}

As usual, given an alphabet $X$, we denote by $W(X)$ the monoid of all words in $X$, including the empty word $1$. In this section  we will need to count the words in the subsets of  $W(X)$, where $X$ is  not necessarily finite. If we do not impose any conditions, the number of words of the same length can be infinite. To cope with this, we consider the alphabets which are \textit{finitely graded}, that is, $X=\bigsqcup_{i=1}^{\infty}{X_n}$ with each  set $X_n$ finite. Now in addition to the \textit{lengths} of words, we can speak about the \textit{degrees} of words. We define the degree of a word $w$ in $W(X)$ by induction on its length if we set  $\deg (1)=0$,  $\deg (x)=n$ for $x\in X_n$ and if $w=x_1\cdots x_k$ then  $\deg (w)=\deg (x_1)+\cdots+\deg (x_k)$. Let $W(X)_n$ be the set of all words of degree $n$. Now each number $d_X(n)=\# W(X)_n$ is finite. We have $W(X)=\bigsqcup_{n=0}^\infty W(X)_n$ where  $W(X)_n$ is the set of all words of degree $n$ in $X$. 

In its turn, the free associative algebra $A(X)$, which is a vector space with basis $W(X)$, acquires a $\Z$-grading $A(X)=\bigoplus_{n=0}^\infty A(X)_n$, where $A(X)_n$ is the linear span of all words in $W(X)_n$.  Now in addition to the grading, $A(X)$  have a natural \textit{degree} filtration
\begin{displaymath}
A(X)^{(0)}\subset A(X)^{(1)}\subset\ldots\subset A(X)^{(n)}\subset\ldots
\end{displaymath}
where $A(X)^{(n)}= A(X)_0\oplus A(X)_1\oplus\ldots\oplus A(X)_n$, for each $n=0,1,2,\ldots$.

As mentioned earlier, every element $f\in A(X)$ acquires a degree if we set $\deg (f) =n$ in the case where $f\in A(X)^{(n)}$ but $f\not\in A(X)^{(n-1)}$. In this case $f=\sum_{i=0}^n f_i$, where $f_i\in A(X)_i$, $f_n\neq 0$; $f_n=\Lp{f}$ is the \textit{leading part} of $f$.

In what follows we are going to study the growth and cogrowth of certain subalgebras $H$ of a free Lie algebra $L=L(X)$. We view $L$ as a graded subspace of $A=A(X)$ which is generated by $X$ with respect to the bracket operation $[a,b]=ab-ba$. The cardinality $\# X$ of $X$ is often called the \textit{rank} of any of the system appearing in this text: $W(X)$, $A(X)$ or $L(X)$.
Being a graded subspace of $A$,  $L$ becomes $\Z$-graded: $L=\bigoplus_{n=1}^\infty L_n$ where $L_n=L\cap A_n$. Similarly, we have a degree filtration of $L$ as follows:
\begin{equation}\label{en2}
L^{(1)}\subset L^{(2)}\subset\ldots\subset L^{(n)}\subset\ldots
\end{equation}
where $L^{(n)}=L\cap A^{(n)}$ or 
$L^{(n)}= L_1\oplus\ldots\oplus L_n$, for each $n=1,2,\ldots$. Every element $u\in L$ has degree which is the same whether we use the degree filtration of $A$ or $L$. When we apply the notions of subsection \ref{ssRG} to the free Lie algebra, we view  (\ref{en2}) as the ``base'' filtration $\alpha$ of the formula (\ref{eF}) and then we know what is the degree, the leading part of an element of $L$ and, given a subalgebra $H\subset L$, what is the associated graded subalgebra $\gr H$. This is indeed a subalgebra because the commutator of two leading parts is ether zero or the leading part of the commmutator of two elements of $H$.
 
Using Lemma \ref{lGR}, one can see that the growth and the cogrowth of $H$ and $\gr H$ are the same.

In the next lemmas we consider the elements of the free Lie algebra $L(X)$ called the \textit{commutators in $X$} of certain length. The commutators of length 1 are just the elements of $X$. If the commutators of all lengths less than $n>1$ have been defined,  then the commutators of length $n$ are all the elements $[c,d]$ where $c$ is a commutator of length $k$ and $d$ a commutator of length $n-k$. 

Since we always view $L(X)$ as a subalgebra in the free associative algebra $A(X)$, recalling $[c,d]=cd-dc$, one can write each commutator as a linear combination of associative words, each of which has the same degree. Thus each commutator acquires a uniquely defined degree and $\deg ([c,d])=\deg (c)+\deg (d)$.

If we forget all brackets and commas on a commutator $c$, then resulting is an associative word $w$, we write $c=[w]$, called the \textit{associative support of} $c$. We can also say that any commutator can be obtained by replacing brackets (and there inherent commas) on an associative word. Replacing brackets in a certain way on certain associative words may produce a linear basis of the free Lie algebra $L(X)$. Let us describe a process due to A. I. Shirshov \cite{S}. 

Suppose we have a linear ordering $\le$ on the set $X$ of free generators of the free Lie algebra $L(X)$. We extends this ordering to the lexicographical ordering of the free monoid $W(X)$. Under this ordering,  given two words $u$ and $v$,  then we say that $u\ge v$ if $u$ is a prefix of $v$. If not, we say that $u <v$ if $u=wxu'$, $v=wyv'$ and $x<y$ where $x,y\in X$ and $w, u, v, u', v'\in W (X)$. 

If $c$ is a commutator then the \textit{leading word} $\bar c$ of $c$ is defined as the greatest word nontrivially entering the expansion of $c$ as a linear combination of the elements of $W(X)$. Let us say that $w$ is a \textit{Lyndon - Shirshov} or simply \textit{LS-word} if whenever $w=uv$, with $u$ and $v$ nonempty, then $w>vu$. (Notice that in this case we must also have $w>v$, that is, every LS-word is greater than its proper suffix.)

Let us call a commutator $c$ an \textit{LS-commutator} if $c=[w]$ where $w$ is an LS-word and the following two conditions hold. 

\begin{enumerate}
\item[{\rm(a)}] If $c=[c_1,c_2]$ then each $c_i$ is an LS-commutator with LS-support $w_i$, $i=1,2$, and $w_1>w_2$;
\item[{\rm(b)}] If  $c$ is as in (a) and $c_1=[c_1',c_1'']$ where $w_1''$ is the associative support of $c_1''$ then  $w_1''\le w_2$.
\end{enumerate}

In the remainder of this section, for the convenience of the reader, we state several results of A.I. Shirshov, which are now conveniently available in the collection \cite{SW}. They can also be found in the books \cite{B} and \cite{BK}.

\begin{Lemma}\label{lnS} On each LS-word $w$ one can replace brackets in a unique way so that the resulting commutator $c=[w]$ is an LS-commutator (such that ${\bar c}=w$). The set of all LS-commutators is a linear basis of $L(X)$. 
\end{Lemma}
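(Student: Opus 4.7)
The plan is to prove both assertions of the lemma simultaneously, by induction on the degree $n$ of the LS-word, together with the following classical combinatorial fact (the \emph{standard factorization}): every LS-word $w$ of degree $\ge 2$ has a unique decomposition $w=uv$ in which $v$ is the longest proper suffix of $w$ that is itself an LS-word; moreover in this decomposition $u$ is also an LS-word and $u>v$ in the lexicographic order. I would first record this combinatorial fact (verified directly from the defining inequality $w>vu$), and then do all of the Lie-algebraic work around it.

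For existence and uniqueness of the bracketing, take an LS-word $w$ of degree $n>1$ and form its standard factorization $w=uv$. By induction, both $u$ and $v$ admit unique LS-bracketings $c_u=[u]$, $c_v=[v]$, and I set $c:=[c_u,c_v]$. Condition (a) holds because $u>v$. For condition (b), if $c_u=[c_u',c_u'']$ arises from the standard factorization $u=u'u''$, then I claim $u''\le v$: otherwise $u''v$ would be a longer LS-suffix of $w$ than $v$, contradicting maximality of $v$. Uniqueness follows by running the same argument backwards: in any LS-bracketing $c=[c_1,c_2]$, condition (b) forces the associative support $w_2$ of $c_2$ to be the \emph{longest} proper LS-suffix of $w$, hence $w_2=v$, which then determines $c$ recursively. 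The identity $\bar c=w$ is immediate by induction: expanding $[c_u,c_v]=c_uc_v-c_vc_u$ in $A(X)$, the leading monomial of $c_uc_v$ is $\bar c_u\bar c_v=uv=w$, and every other monomial (including all those coming from $c_vc_u$) is strictly smaller in the lexicographic order, because $u>v$ ensures $uv>vu$ and every non-leading term of $c_u$ or $c_v$ is already smaller than $u$ or $v$ respectively.

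For the basis claim, linear independence is free: if $\sum \lambda_i c_i=0$ with $c_i$ distinct LS-commutators, then by part one the leading words $\bar c_i$ are distinct, so the $\lambda_i c_i$ cannot cancel in degree lexicographic order. For spanning, I would work in each homogeneous component $L_n$ and argue that the LS-commutators of degree $n$ are equinumerous with a linear basis of $L_n$: the number of LS-words of degree $n$ over the alphabet $X$ (with $\#X=m$) is exactly $\frac{1}{n}\sum_{d\mid n}\mu(d)m^{n/d}$, which matches $\dim L_n$ by Witt's formula (\ref{en1}); combined with linear independence, this forces the LS-commutators to form a basis of each $L_n$, hence of $L(X)$. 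Alternatively one can give a direct reduction: any commutator $[x_{i_1},\ldots,x_{i_n}]$ with associative support $w_0$ can, via the Jacobi identity, be rewritten as a linear combination of LS-commutators whose leading words are $\le$ the maximum cyclic rotation of $w_0$, with strict reduction in a well-founded order.

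The main obstacle is the technical verification of condition (b) under the standard factorization, i.e., the combinatorial inequality $u''\le v$ for the nested standard factorizations $w=uv$ and $u=u'u''$. This is the heart of the argument and is what makes the bracketing genuinely \emph{unique} rather than just one of several possibilities; everything else, once (a), (b), and $\bar c=w$ are in hand, follows by routine induction and the Witt count.
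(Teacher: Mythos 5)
The paper itself does not prove this lemma: it is quoted verbatim from Shirshov's work (see the sentence preceding it, citing \cite{SW}, \cite{B}, \cite{BK}), so there is no in-paper argument to compare against. Your route is the classical Shirshov/Chen--Fox--Lyndon one, and in outline it is the right proof: standard factorization $w=uv$ with $v$ the longest proper LS-suffix, verification of conditions (a), (b) and of $\bar c=w$ by induction, triangularity for linear independence, and a counting (or rewriting) argument for spanning. The leading-term computation is fine as you state it, since all monomials being compared have equal length, so the prefix clause of the paper's order never intervenes, and $uv>vu$ handles the $c_vc_u$ terms.

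Two places need genuine work rather than a gesture. First, your check of condition (b) silently uses the concatenation lemma (if $p,q$ are LS-words with $p>q$ then $pq$ is an LS-word) to conclude that $u''v$ would be an LS-suffix longer than $v$; and the uniqueness direction is not just "the same argument backwards": you must prove that conditions (a)+(b) force $w_2$ to be the longest proper LS-suffix of $w$, which is a separate combinatorial lemma (one assumes a longer LS-suffix $sw_2$ with $s$ a nonempty proper suffix of $w_1$, and derives a contradiction using $w_1''\le w_2$ and the inductive characterization of $w_1''$; this takes real case analysis, comparable in length to the existence step). Second, your spanning argument via Witt's formula (\ref{en1}) counts LS-words only when $X$ is finite and all generators have degree $1$; but the paper invokes Lemma \ref{lnS} for finitely graded, possibly infinite alphabets (e.g.\ in Lemma \ref{w1} and Lemma \ref{lAtL}), so you should either redo the necklace count degreewise against the generalized Witt formula for graded generating sets (each $W(X)_n$ is finite, so this works), or carry out in full the alternative rewriting argument you only sketch. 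With these points supplied, the proposal is a correct and standard proof of the cited result.
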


\begin{Lemma}\label{lnS1} Any associative word $v\ne 1$, which is not necessarily an LS-word, can be uniquely written as the product of LS-words $v=u_1u_2\cdots u_s$ so that $1\ne u_1\le u_2\le\ldots\le u_s$.
\end{Lemma}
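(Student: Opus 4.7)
The statement is the classical Chen–Fox–Lyndon factorization theorem transported into the paper's conventions: since a proper prefix is strictly \emph{larger} in the paper's lex order, LS-words are precisely the words strictly greater than each of their nontrivial cyclic rotations, and the requested factorization is \emph{non-decreasing}. I would handle existence and uniqueness separately, with existence resting on a single merging lemma.

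\textbf{Existence.} I would induct on $\deg v$. If $\deg v=1$ then $v\in X$ is already LS. For longer $v$, start from the trivial factorization of $v$ as a concatenation of single letters (each an LS-word). If the current admissible factorization $v=f_1\cdots f_r$ is not sorted, pick any $i$ with $f_i>f_{i+1}$ and merge the two factors into the single word $f_i f_{i+1}$ via the

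\emph{Merging Lemma.} If $u$ and $w$ are LS-words with $u>w$, then $uw$ is an LS-word.

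Each merge strictly drops the factor count, so the procedure terminates in a sorted admissible factorization. To prove the Merging Lemma I would check that $uw>\rho$ for every proper cyclic rotation $\rho$ of $uw$. The rotations come in three flavors: $\rho=u_2 w u_1$ from a nontrivial split $u=u_1 u_2$; the boundary rotation $\rho=wu$; and $\rho=w_2 u w_1$ from a nontrivial split $w=w_1 w_2$. For the first flavor, a common-prefix analysis between $u_1$ and $u_2$ reduces the desired inequality to the LS property of $u$ applied to its own rotation $u_2 u_1$. The boundary case is settled at the first letter when $u[1]\ne w[1]$ by $u>w$; when $u$ is a proper prefix of $w$, one writes $w=uv$ and uses the LS inequality $uv>vu$ to deduce $uuv>uvu$. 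The third flavor is dual to the first and rests on the LS property of $w$.

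\textbf{Uniqueness.} Suppose $v=u_1\cdots u_s=w_1\cdots w_t$ are two sorted factorizations into LS-words. If $u_1=w_1$, strip the first factor and induct on $\deg v$. Otherwise one of $u_1$, $w_1$ is a proper prefix of the other; by symmetry assume $|u_1|<|w_1|$, so $w_1=u_1 p$ with $p$ nonempty. Being a proper prefix makes $u_1>w_1$, and the sorted hypothesis gives $u_j\ge u_1>w_1$ for every $j$. Let $j\ge 2$ be the smallest index with $|u_1|+\cdots+|u_j|\ge|w_1|$. If equality holds, $u_j$ is a proper suffix of the LS-word $w_1$, so $u_j<w_1$, contradicting $u_j>w_1$. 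If the inequality is strict, $w_1=u_1\cdots u_{j-1}p'$ with $p'$ a nonempty proper prefix of $u_j$; the proper-prefix order gives $p'>u_j>w_1$, while $p'$ is a proper suffix of the LS-word $w_1$, giving $p'<w_1$, again a contradiction. Hence $u_1=w_1$ and the induction closes.

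\textbf{Main obstacle.} The substantive work is concentrated in the Merging Lemma. Although each case reduces conceptually to one of the LS properties of $u$ or $w$ together with the hypothesis $u>w$, making the reduction precise demands a careful letter-by-letter comparison, because in the paper's order a rotation of $uw$ may share an initial segment with $uw$ of unpredictable length, and the verdict is decided only at the first letter of disagreement. Once the Merging Lemma is established, both existence and uniqueness are straightforward inductions on $\deg v$.
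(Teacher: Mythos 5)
The paper offers no proof of this lemma to compare against: it is stated as one of Shirshov's classical results, quoted with references to \cite{SW}, \cite{B}, \cite{BK} (it is the Chen--Fox--Lyndon/Shirshov factorization transported to the convention that a proper prefix is the \emph{larger} word). Judged on its own, your plan is the standard Duval-style argument, correctly dualized: with the paper's order the factorization is non-decreasing, the right key statement is indeed that $u,w$ LS with $u>w$ forces $uw$ to be LS, and your uniqueness argument is complete and correct as written --- it uses only the prefix rule and the paper's observation that an LS-word exceeds its proper suffixes, and the two subcases ($u_j$ a proper suffix of $w_1$, resp.\ $p'$ simultaneously a proper prefix of $u_j$ and a proper suffix of $w_1$) are exactly the right dichotomy.

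The thin spot is inside the Merging Lemma, and it is slightly thinner than your ``careful letter-by-letter comparison'' caveat suggests. For rotations $u_2wu_1$ with $u=u_1u_2$, the common-prefix analysis reduces to the inequality $u>u_2u_1$ only when the first disagreement between $u_1u_2$ and $u_2u_1$ falls inside $u_2$; otherwise $u_2$ is a border of $u$, and you need the (easy, but separate) fact that LS-words are unbordered, proved by playing two rotations of $u$ against each other. More importantly, the rotations $w_2uw_1$ are not ``dual'' to the first flavor and cannot rest on the LS property of $w$ alone: the hypothesis $u>w$ must enter, via the chain $u>w>w_2$, and in the residual case where $u$ is a proper prefix of $w_2$ (this does occur, e.g.\ $u=ba$, $w=baabaaa$, $w_2=baaa$ with $b>a$), stripping the common prefix $u$ leaves a comparison of $w=w_1ut$ with $tuw_1$, which is in general \emph{not} a rotation of $w$; finishing requires the suffix inequality $w>t$ together with unborderedness of $w$. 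The cleanest repair is the classical one: first prove that being greater than every proper suffix characterizes LS-words (or at least prove unborderedness and the intermediate inequality $uw>w$, the dual of the usual $uv<v$), and then all three families of comparisons fall out in a few lines. With that supplement your existence-by-merging and your uniqueness argument go through as stated.
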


  To list a consequence of this fact, of importance to us, we introduce a convenient notation $[c_1,c_2,\ldots,c_k]$, called the \textit{left-normed commutator} of $c_1,c_2,\ldots,c_k$. This is defined by induction, starting with just $c_1$, if $k=1$. If $k>1$ then one sets $[c_1,c_2,\ldots,c_{k-1},c_k]=[[c_1,c_2,\ldots,c_{k-1}],c_k]$.

\begin{Corollary}\label{cnLS} If $c$ is an LS-commutator and $w={\bar c}=yv$ for some $y\in X$ and $w\in W(X)$ then $y$ is the maximal letter among all letters involved in the expression of $w$ and $c=[y,[u_1],[u_2],\ldots[u_s]]$, $u_1\le u_2\le\ldots\le u_s$, where each $[u_i]$ is the (unique) LS-commutator with the associative support $u_i$, $i=1,2,\ldots,s$.
\end{Corollary}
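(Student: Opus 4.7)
The two assertions split naturally: maximality of $y$ among the letters of $w$ comes from the LS-property applied to a particular proper suffix, while the bracketing structure is obtained by induction on the length of $w$ using conditions (a) and (b) of the LS-commutator definition, together with the uniqueness supplied by Lemmas \ref{lnS} and \ref{lnS1}. For maximality, I would argue by contradiction: if some letter $z$ appearing in $w$ satisfies $z>y$, let $w_*$ be the proper suffix of $w$ beginning at that occurrence of $z$. Since an LS-word exceeds each of its proper suffixes, $w>w_*$; but $w$ and $w_*$ differ at their first letter, with $w$ carrying $y$ and $w_*$ carrying $z>y$, so the lexicographic rule instead gives $w<w_*$, a contradiction.

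For the structural claim, I would induct on the length of $w$. The base case (length $1$) reduces to $c=y$ with an empty decomposition. For the inductive step, condition (a) presents $c=[c_1,c_2]$ with LS-commutators $c_i$ whose LS-supports $w_i$ satisfy $w_1>w_2$ and $w=w_1w_2$. Since $w$ begins with $y$, so does $w_1$, and $w_1$ is strictly shorter than $w$; applying the inductive hypothesis to $c_1$ yields $c_1=[y,[u_1],\ldots,[u_{s-1}]]$ with $w_1=y\,u_1\cdots u_{s-1}$ the non-decreasing LS-factorization from Lemma \ref{lnS1}. Setting $u_s:=w_2$ and $[u_s]:=c_2$ produces $c=[y,[u_1],\ldots,[u_s]]$ with $w=y\,u_1\cdots u_s$, as claimed.

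It remains to verify $u_{s-1}\le u_s$, which will ensure that the factorization $v=u_1\cdots u_s$ is non-decreasing and hence, by the uniqueness in Lemma \ref{lnS1}, coincides with the standard LS-factorization of $v$. When $s\ge 2$, condition (b) applied to $c=[c_1,c_2]$ with $c_1=[c_1',[u_{s-1}]]$ gives precisely $u_{s-1}\le w_2=u_s$; for $s=1$ there is nothing to check. I expect the only genuine subtlety to lie here: condition (b) is the ingredient that forces the inductive bracketing to be compatible with the non-decreasing factorization of Lemma \ref{lnS1}, and without it the monotonicity would not be automatic.
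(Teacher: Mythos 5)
Your argument is correct. The paper states this corollary without proof, as a direct consequence of Shirshov's results (Lemmas \ref{lnS} and \ref{lnS1}), and your reconstruction is exactly the natural argument: the maximality of $y$ follows from the fact that an LS-word exceeds its proper suffixes (noted parenthetically in the paper) together with the first-letter comparison, which applies since neither word is a prefix of the other; and the left-normed form with $u_1\le\cdots\le u_s$ follows by induction on the length of $w$, with condition (a) giving the top-level split $w=w_1w_2$, the inductive hypothesis handling $c_1$, and condition (b) supplying precisely the missing inequality $u_{s-1}\le u_s$ (with the case $s=1$ trivial). Your closing observation that (b) is what forces compatibility with the unique non-decreasing factorization of Lemma \ref{lnS1} is exactly the right point to single out.
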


A standard technique in the theory of free Lie algebras is the \textit{elimination} of one or more elements of a free basis (sometimes called Lazard elimination \cite{NB}).

\begin{Lemma}\label{pnC1} Let $L(X)$ be a free Lie algebra with free basis $X=Y\bigsqcup\{ z\}$, where $Y\neq\emptyset$. Let $J=\mathrm{id}_L Y$ be the ideal of $L$ generated by $Y$. Let $B$ be the set of all left normed commutators $[y,z,\ldots,z]$, where $y\in Y$. Then $J$ is a free Lie algebra with free basis $B$.
\end{Lemma}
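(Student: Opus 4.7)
The plan has two parts: first to show that $B$ generates $J$ as a Lie subalgebra of $L$, and then to verify that $B$ is algebraically free.

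Let $K$ denote the Lie subalgebra of $L$ generated by $B$. Each commutator $[y,z,\ldots,z]$ lies in $J$ because $y\in J$ and $J$ is an ideal, so $K\subseteq J$. For the reverse inclusion, I will argue that $K$ is an ideal of $L$ containing $Y$. Since $Y\subseteq B\subseteq K$, one has $[K,Y]\subseteq[K,K]\subseteq K$ at once. Moreover $[K,z]\subseteq K$: on the generating set $B$ this is clear, since $[[y,z,\ldots,z],z]\in B$, and the subspace $\{u\in K:[u,z]\in K\}$ is closed under brackets by the Jacobi identity, so it coincides with $K$. A standard Jacobi argument then shows that $\{\ell\in L:[K,\ell]\subseteq K\}$ is a Lie subalgebra of $L$ containing $X=Y\cup\{z\}$, hence equal to $L$. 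Thus $K$ is an ideal of $L$ containing $Y$, so $K\supseteq J$ and $K=J$.

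For freeness I build an auxiliary model. Let $\tilde L$ be the abstract free Lie algebra over $F$ on a set $B'=\{\tilde b_y^{(n)}:y\in Y,\,n\ge 0\}$ in bijection with $B$. The universal property of $\tilde L$ produces a unique derivation $\partial:\tilde L\to\tilde L$ with $\partial(\tilde b_y^{(n)})=-\tilde b_y^{(n+1)}$ on free generators. Put $L':=\tilde L\oplus Fz$ with the bracket of $\tilde L$ extended by $[z,u]:=\partial(u)$ for $u\in\tilde L$; the Jacobi identity on $L'$ is equivalent to the Leibniz property of $\partial$. The universal property of $L=L(X)$ then supplies a Lie algebra homomorphism $\psi:L\to L'$ sending $y\mapsto\tilde b_y^{(0)}$ for $y\in Y$ and $z\mapsto z$. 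A short induction gives $\psi([y,z,\ldots,z])=\tilde b_y^{(n)}$ for every $n\ge 0$: the minus from the definition of $\partial$ and the minus from $[u,z]=-[z,u]$ cancel at each step.

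To conclude, let $\phi:L(B)\to J$ be the canonical Lie algebra homomorphism from the abstract free Lie algebra on $B$ sending each formal generator to its interpretation in $J$; it is surjective by the first part. The composition $\psi\circ\phi:L(B)\to\tilde L$ sends the free basis of $L(B)$ bijectively onto the free basis $B'$ of $\tilde L$, hence is an isomorphism; this forces $\phi$ to be injective as well. Thus $J\cong L(B)$, i.e.\ $J$ is free on $B$. The principal subtlety is the construction of the auxiliary algebra $L'$: one invokes the universal property of a free Lie algebra twice (to define $\partial$ and then $\psi$), and must check that the Jacobi identity on $L'$ follows from the Leibniz identity for $\partial$ — routine, but the signs must be tracked carefully so that $\psi$ lands on $\tilde b_y^{(n)}$ rather than $(-1)^n\tilde b_y^{(n)}$.
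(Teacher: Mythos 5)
Your proof is correct. The paper itself gives no proof of this lemma: it is quoted as the standard Lazard elimination with a citation to Bourbaki, and your argument is essentially the classical one found there. Both halves check out: the generation step (showing the subalgebra $K$ generated by $B$ is an ideal containing $Y$, via the Jacobi identity applied first to $\{u\in K:[u,z]\in K\}$ and then to the idealizer of $K$ in $L$) is sound, and the freeness step via the semidirect product $L'=\tilde L\oplus Fz$ with $z$ acting by the derivation $\partial$ is the standard device; your sign convention $\partial(\tilde b_y^{(n)})=-\tilde b_y^{(n+1)}$ does make $\psi([y,z,\ldots,z])=\tilde b_y^{(n)}$ on the nose, the Jacobi identity on $L'$ holds (the case with two occurrences of $z$ is trivial, the case with one occurrence is exactly the Leibniz rule), and the conclusion that $\psi\circ\phi$ carries free generators bijectively to free generators, hence is an isomorphism forcing $\phi$ injective, is a legitimate way to finish. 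The only reading convention worth making explicit is that $B$ includes the commutators with zero occurrences of $z$, i.e.\ $Y\subseteq B$, which is how the paper uses the lemma and which your indexing $n\ge 0$ already assumes.
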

 
\section{Growth of finitely generated subalgebras of free Lie algebras}\label{sCGFGS}

As before, $L=L(X)$ stands for a free Lie algebra over a field $F$ with a free basis $X$. This section is devoted to the proof of Theorem \ref{t1I}.

Let us call a subset $S$ of a free Lie algebra $L$ \textit{irreducible} if no leading part of any of its elements belongs to the subalgebra generated by the leading parts of the remaining elements of $S$. In his proof of the theorem on the freeness of subalgebras of free Lie algebras \cite{S}, Shirshov shows that any subalgebra $M$ can be generated by an irreducible set $S$. Then he shows that any irreducible set is \textit{independent}, that is, may serve as a free basis for a subalgebra it generates.  We can summarize and slightly complement
 this as follows.

\begin{Lemma}\label{pnC0} Let $ S $ be a subset of a free Lie algebra $ L $,  $ S ' $ the set of leading parts of the elements in $ S $.  If $ S $ is irreducible then $ S ' $ is independent.  If $ S ' $ is independent then $S $ is also independent. Any subalgebra of a free Lie algebra can be generated by an irreducible set.
\end{Lemma}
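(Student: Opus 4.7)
The plan is to treat the three assertions separately: (2) ``$S'$ independent $\Rightarrow$ $S$ independent'' is a direct top-degree calculation; (3) existence of an irreducible generating set is a degree-reduction argument; and (1) ``$S$ irreducible $\Rightarrow$ $S'$ independent'' is the combinatorial heart, essentially Shirshov's original argument for the Shirshov--Witt theorem. I sketch each in turn, expecting (1) to be the main obstacle.

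For (2), suppose for contradiction that $f(s_1,\ldots,s_k)=0$ for distinct $s_i\in S$ and a nonzero Lie polynomial $f\in L(y_1,\ldots,y_k)$. I would grade the free Lie algebra on the $y_i$ by $\deg y_i=\deg s'_i$ and write $f=\sum_j f_j$ under this grading. Letting $j^*=\max\{j:f_j\ne0\}$, the degree-$j^*$ component in $L(X)$ of $f(s_1,\ldots,s_k)$ equals $f_{j^*}(s'_1,\ldots,s'_k)$, because substituting $s_i=s'_i+(\text{lower degree})$ keeps every term of $f_j$ at degree at most $j$, so only $f_{j^*}$ can contribute to degree $j^*$. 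The vanishing of $f(s_1,\ldots,s_k)$ forces $f_{j^*}(s'_1,\ldots,s'_k)=0$, and independence of $S'$ then forces $f_{j^*}=0$ as a polynomial, contradicting the choice of $j^*$.

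For (3), I would start with any generating set $S$ of the subalgebra $M$. If $S$ is reducible, some $s\in S$ admits an expression $\Lp{s}=\varphi(\Lp{t_1},\ldots,\Lp{t_r})$ for a Lie polynomial $\varphi$ and $t_j\in S\setminus\{s\}$; one replaces $s$ by $s-\varphi(t_1,\ldots,t_r)$, which lies in $\langle S\rangle=M$ and has strictly smaller degree (or is zero, in which case it is discarded). Iterating, and using well-founded induction on multisets of degrees (or Zorn's lemma for uncountable $S$) to guarantee termination, one arrives at an irreducible generating set of $M$.

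For (1), I would assume for contradiction a nontrivial homogeneous Lie relation $f(s'_1,\ldots,s'_k)=0$ (homogeneous with respect to the grading $\deg y_i=\deg s'_i$), chosen minimal first in the number of involved variables and then in total degree. The goal is to extract from $f$ an explicit identity $s'_j=\psi(s'_1,\ldots,\widehat{s'_j},\ldots,s'_k)$, contradicting irreducibility of $S$. The intended tools are the Lazard elimination of Lemma \ref{pnC1}, which decomposes $L(y_1,\ldots,y_k)=Fy_k\oplus J$ with $J$ free on the left-normed commutators $[y_i,y_k,\ldots,y_k]$, together with the Lyndon--Shirshov basis of Lemma \ref{lnS} and Corollary \ref{cnLS}, which control the leading LS-words of the summands of $f$. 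The hard part will be this extraction step: unlike in commutative algebra one cannot simply ``solve for $y_j$'' in a Lie relation, and the passage from a vanishing Lie polynomial to an expression of one generator in terms of the others has to proceed through a careful combinatorial tracking of leading LS-words, essentially Shirshov's composition argument, which is the reason (1) forms the technical core of Shirshov--Witt rather than admitting a one-line derivation.
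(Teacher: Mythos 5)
Your treatment of the second claim ($S'$ independent $\Rightarrow$ $S$ independent) is exactly the paper's argument: the paper's entire proof is devoted to this claim, phrased as the general fact that in an algebra free in a variety and graded with respect to a free basis, a nontrivial relation among the elements of $S$ entails a nontrivial relation among their leading parts; your computation with the top graded component $f_{j^*}$ is that principle made explicit. (Both versions tacitly assume the leading parts $s'_1,\dots,s'_k$ are pairwise distinct; this is harmless because irreducibility of $S$ forces it, but it deserves a word.) For the third claim the paper gives no proof at all: the paragraph preceding the lemma attributes to Shirshov \cite{S} both that every subalgebra has an irreducible generating set and that irreducible sets are independent, and the proof opens with ``we only need to explain the second claim.'' Your rewriting sketch for (3) is Shirshov's reduction in spirit, though termination for infinite $S$ is the delicate point; it is cleaner to choose, by induction on degree, elements of $M$ whose leading parts generate $\gr M$, which produces an irreducible generating set directly.

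The genuine gap is your claim (1). You set up the contradiction, name the tools (Lemma \ref{pnC1}, Lemma \ref{lnS}, Corollary \ref{cnLS}), and then explicitly leave the ``extraction step'' open; but that step is the entire content of Shirshov's theorem that an irreducible set is independent, so as written (1) is not proved. You also miss the reduction that makes any new combinatorics unnecessary: the elements of $S'$ are homogeneous, hence each is its own leading part, so irreducibility of $S$ says verbatim that $S'$ is irreducible (and that the leading parts are distinct); the Shirshov result quoted just before the lemma then applies directly to $S'$ and yields (1) with no further work, which is how the paper treats it. So either invoke \cite{S} at that point, as the authors do, or supply Shirshov's elimination/LS-word argument in full — the proposal currently does neither.
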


\begin{proof} We only need to explain the second claim.  However,  it is true in any algebra which is free in a variety of algebras and endowed with a grading with respect to a free basis, that any nontrivial relation between the elements of a subset $S$ entails a nontrivial relation between the elements of $S'$.
\end{proof}

\begin{Lemma}\label{pnC2} Let $M$ be a subalgebra of a free Lie algebra $L$, $M'=\gr M$ the associated graded subalgebra. Let $S$ be an irreducible free basis of $M$ and $S'=\{ \Lp{s}\,|\,s\in S\}$ the set of leading parts of the elements in $S$. Then the map $\vp:S\to S'$ given by $\vp(s)=\Lp s$ extends to a degree preserving isomorphism $\overline{\vp}:M\to M'$. 
\end{Lemma}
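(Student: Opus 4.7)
The plan is to invoke the universal property of the free Lie algebra $M$ on $S$ to construct $\overline{\vp}$. Since $S$ is, by hypothesis, a free basis of $M$, the assignment $s\mapsto\Lp{s}$ extends uniquely to a Lie algebra homomorphism $\overline{\vp}\colon M\to L$, sending any Lie polynomial expression $P(s_1,\ldots,s_k)$ to $P(\Lp{s_1},\ldots,\Lp{s_k})$; its image is exactly the subalgebra $\langle S'\rangle$ of $L$ generated by $S'$. By Lemma~\ref{pnC0}, irreducibility of $S$ makes $S'$ independent, so $S'$ is a free basis of $\langle S'\rangle$. Since $\overline{\vp}$ carries the free basis $S$ of $M$ bijectively onto the free basis $S'$ of $\langle S'\rangle$, it is a Lie algebra isomorphism between $M$ and $\langle S'\rangle$.

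What remains is to identify $\langle S'\rangle$ with $M'=\gr M$ and to check that $\overline{\vp}$ preserves degrees. For this I would assign to each formal variable $x_s$ the weight $\deg s$, and for a nonzero $a=P(s_1,\ldots,s_k)\in M$ decompose $P=\sum_w P_w$ into its weighted-homogeneous components. Each piece $P_w(s_1,\ldots,s_k)$ lies in $L^{(w)}$, and its component of top degree $w$ is precisely $P_w(\Lp{s_1},\ldots,\Lp{s_k})$, which is nonzero whenever $P_w\ne 0$ by the independence of $S'$. Letting $w_0$ be the largest weight with $P_{w_0}\ne 0$, it follows that $\deg a=w_0$ and $\Lp{a}=P_{w_0}(\Lp{s_1},\ldots,\Lp{s_k})\in\langle S'\rangle$. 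Thus every leading part of an element of $M$ lies in $\langle S'\rangle$, giving $\gr M\subseteq\langle S'\rangle$; the reverse inclusion is immediate from $S'\subseteq\gr M$ and the fact that $\gr M$ is a subalgebra. The same expansion yields $\overline{\vp}(a)=\sum_w P_w(\Lp{s_1},\ldots,\Lp{s_k})$ with nonzero top homogeneous component of degree $w_0$, so $\deg\overline{\vp}(a)=w_0=\deg a$.

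No substantive obstacle arises: once one introduces the weighted grading in which each generator $s_i$ carries its own $L$-degree, the three required properties of $\overline{\vp}$ — being a well-defined homomorphism, bijectivity, and degree preservation — all reduce to the freeness of $\langle S'\rangle$ on $S'$ supplied by Lemma~\ref{pnC0}.
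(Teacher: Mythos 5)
Your proposal is correct and follows essentially the same route as the paper: freeness of $M$ on $S$ together with Lemma~\ref{pnC0} gives the isomorphism onto $\langle S'\rangle$, and the weighted-homogeneous decomposition of the Lie polynomial (your $P=\sum_w P_w$ versus the paper's $f=f_1+f_2$ with $f_1$ the top weighted part) shows $\Lp{\overline{\vp}(u)}=\Lp{u}$, whence $\langle S'\rangle=\gr M$ and degree preservation. Your version merely spells out slightly more explicitly the two inclusions identifying $\langle S'\rangle$ with $\gr M$.
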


\begin{proof} According to Lemma \ref{pnC0},  $S'$ is an independent set,  hence a free basis of a subalgebra $ N $. As a result, $\overline{\vp}$ is an isomorphism from $ M $ to $ N $.  We only need to show that $ N=M' $. Since $ S ' $ and $ M'$ are homogeneous,  it is sufficient to establish $\Lp {\overline{\vp}(u)}= \Lp {u} $, for any $ u\in M\setminus\{ 0\}$. To do this,  let us express $ u $ as a Lie polynomial $ u=f (s_1,\ldots, s_m)$, in $s_1,\ldots,s_m \in S$, and write $ f=f_1+f_2 $, where $ f_1 $ is a linear combination of monomials of the highest degree if we count each variable $s_i$ with the same degree as $\deg s_i $ in $ L $.  Then 
\begin{eqnarray*}
{\overline{\vp}(u) }&=& \Lp{\overline {\vp}(f_1(s_1,\ldots, s_m))+ \overline {\vp} (f_2(s_1,\ldots, s_m ))}\\&=&\Lp{f_1(s'_1, \ldots, s'_m )+f_2( s'_1, \ldots, s'_m )}=f_1( s'_1, \ldots, s'_m )=\Lp{u}, 
\end{eqnarray*}as needed. 
\end{proof}

Now we proceed to the proof of Theorem \ref{t1I}.

\proof  

According to Lemmas \ref{lGR} and \ref{pnC2}, given a finitely generated subalgebra $M$ of $L$, the growth and cogrowth of a finitely generated homogeneous subalgebra $M'=\gr M$ is the same as the growth and cogrowth of $M$. If $M$ is proper then the same is true for $M'$. Therefore, in proving our theorem we may restrict ourselves to the case where $M$ is a nonzero graded subalgebra. This entails, $m\ge 2$.

Let $\{ z_1,\ldots,z_k\}$ be a free homogeneous basis of $M$. Since $M$ is proper in $L$, the elements of degree 1 among $\{ z_1,\ldots,z_k\}$ form a basis $Z$ of $M\cap L_1$, which is proper in $L_1$. Let us complement $Z$ to a linear basis $Y$ of $L_1$. Then we will obtain a new free basis $Y$ of $L$, which properly includes $Z$.  As a result, the main degree filtration (\ref{en2}) does not change and so the relative growth functions of subalgebras do not change. The elements of degree 1 of this set are a proper part $Z$ of a free basis $Y$. Hence there is an element of the new free basis, say $y$, which is not an element of $Z$. 

As a result, without loss of generality, we may assume from the very beginning that the elements of degree 1 in the free homogeneous basis $\{ z_1,\ldots,z_k\}$ of $M$ are a part of the fixed free basis $\{ x_1,\ldots,x_m\}$ of $L$ and $x_1$ is not in $M$. Let us consider $L$ as a subalgebra in the free associative algebra $A=A(x_1,\ldots,x_m)$. Clearly,  $z_1,\ldots,z_k$ are the elements of the subalgebra $B$ of $A$ which is generated by finitely many monomials $u_1,\ldots,u_t$ (all the  monomials used to write the generators $z_1,\ldots,z_k$). By our assumption, we know that none of these monomials equals $x_1$. Moreover, we can actually assume that none of $u_1,\ldots,u_t$ is a power of $x_1$. Indeed, no such monomials can appear while writing the elements of the free Lie algebra $L(x_1,\ldots,x_m)$ in the free associative algebra $\A(x_1,\ldots,x_m)$. It remains to show that the exponent of the growth of $B$ is less than $m$. 

Let us choose an integer $d$ which is at least the double maximum of the degrees of all $u_1,\ldots,u_t$ with respect to $x_1,\ldots,x_m$. Since any of these monomials contains as a factor a letter different from $x_1$, no product can contain a subword $x^d$. It is known (see, e.g., \cite[Lemma 8]{BO} that for every nonempty word $w$ in $x_1,\ldots,x_m$  there exist $C,\ve>0$ such that the number of words of length $n$ which do not have $w$ as a subword is bounded by $C(m-\varepsilon)^n$.
\endproof

\begin{Remark}\label{rassoc0} There are some consequences of Theorem \ref{t1I} that are well-known (see e.g. \cite[Chapter 3]{B}). For instance, because the growth of a subspace of finite codimension is equivalent to the growth of the whole space, we conclude  that \textit{ in a free Lie algebra the proper nonzero subalgebras of finite codimension cannot be finitely generated}.  Another consequence, already mentioned in Introduction, is the following: \textit{ in a free Lie algebra any nonzero finitely generated subalgebra is self-idealizing}. 
\end{Remark}

\begin{Remark}\label{rassoc1} Theorem \ref{t1I} fails in the case of associative algebras. A simple example of a proper finitely generated subalgebra whose growth is not exponentially negligible is the subalgebra  $B$ generated by all monomials of degree 2 and 3 in the free associative algebra $A$ of rank $m\ge 1$. In this case $\dim A/B=m+1$, following because any number $n>1$ can be written as $n=2k+3l$, where $k$ and $l$ are non-negative integers.   

However, even if we assume $A/B$ infinite-dimensional, we still can have examples of not exponentially negligible finitely generated subalgebras. For instance, one can proceed as follows. Let $\alpha$ be the standard filtration on $A$ of rank $m\ge 1$ and $B$ a subalgebra of $A$ generated by all monomials of degree 2.  We obviously have $\dim A/B=\infty$, but at the same time the value of the growth function for the filtration $\beta=\alpha\cap B$ is $g_\beta(n)=\dfrac{m^{n+2}-1}{m^2-1}$, if $n$ is even. For $\alpha$ we have $g_\alpha(n)=\dfrac{m^{n+1}-1}{m-1}$. Clearly, $B$ is \textit{not} exponentially negligible.
\end{Remark}

\section{Some properties of words in infinite alphabets}\label{sCGW}

Infinite alphabets naturally appear when one applies a standard technique of elimination (see Lemma \ref{pnC1}). Although it is not our goal to formally generalize certain results to the case of free Lie algebras of infinite rank, the logic of the proofs makes it necessary to consider such algebras, as an auxiliary tool.

In what follows, we will be considering only the alphabets with at \textit{least two letters}. Let $k_n=\# X_n$ be the number of letters of degree $n$. In our analysis, we will be imposing on $X$ some conditions, as follows.

\bigskip
\textbf{Condition} $\mathrm{G}$: \textit{Either $k_2=k_3=...=0$ or, for every $i$, if  $k_i>0$ then $k_{i+1}>0$.}
\smallskip

To formulate another condition we first introduce a function of one real variable $\zeta$ ($z$ is a positive parameter): 
\begin{equation}\label{ezeta}
F(\zeta)=F^X_z(\zeta)=\sum_{i=1}^{\infty}\frac {k_i}{(z-\zeta)^i}.
\end{equation}
Its domain is the set of real values of $\zeta$ for which the series on the right hand side converges.

Recall (Subsection \ref{ssRLFB}) that $d_X(n)$ stands for the number of words of degree $n$ in the free monoid $W(X)$. A simple fact is the following.

\begin{Lemma} \label{bezslov1} 
If $F(0)\le 1$ then $d_X(n)\le z^n$, for all natural values of $n$.
\end{Lemma}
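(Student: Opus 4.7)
My plan is to proceed by a straightforward induction on $n$, using the natural recurrence for $d_X(n)$ together with the definition of $F(0)$.

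First I would record the recurrence. Every nonempty word $w \in W(X)$ of degree $n \ge 1$ factors uniquely as $w = x\, w'$, where $x \in X_i$ is its first letter, of some degree $1 \le i \le n$, and $w' \in W(X)$ has degree $n - i$. Counting over the choice of first letter gives
\begin{equation*}
d_X(n) \;=\; \sum_{i=1}^{n} k_i \, d_X(n-i), \qquad n \ge 1,
\end{equation*}
with the initial value $d_X(0) = 1$ (only the empty word has degree $0$). Note also that $F(0) = \sum_{i=1}^{\infty} k_i \, z^{-i}$ by the definition \eqref{ezeta} specialized at $\zeta = 0$, so the hypothesis reads $\sum_{i=1}^{\infty} k_i z^{-i} \le 1$.

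Next I would carry out the induction. The base case $n = 0$ is immediate, since $d_X(0) = 1 = z^0$. For the inductive step, assuming $d_X(j) \le z^j$ for every $0 \le j < n$, the recurrence yields
\begin{equation*}
d_X(n) \;=\; \sum_{i=1}^{n} k_i \, d_X(n-i) \;\le\; \sum_{i=1}^{n} k_i \, z^{n-i} \;=\; z^n \sum_{i=1}^{n} k_i z^{-i} \;\le\; z^n \sum_{i=1}^{\infty} k_i z^{-i} \;=\; z^n F(0) \;\le\; z^n,
\end{equation*}
which completes the induction. There is no real obstacle here; the only subtlety is that the series defining $F(0)$ must make sense, but this is built into the hypothesis $F(0) \le 1$ (in particular $z > 0$ and the tail $\sum_{i>n} k_i z^{-i}$ is nonnegative, so extending the finite sum to the infinite one only enlarges it). Note that Condition $\mathrm{G}$ is not needed for this particular statement — it will presumably enter in subsequent results where one wants a converse or a matching lower bound.
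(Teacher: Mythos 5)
Your proof is correct and follows essentially the same route as the paper: induction on $n$ using the recurrence $d_X(n)=\sum_{i=1}^{n}k_i\,d_X(n-i)$ (the paper peels off the last letter rather than the first, which is immaterial) and then bounding the finite sum by the full series $F(0)\le 1$. Nothing further is needed.
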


\proof Induction on $n=0,1,\ldots$. We have $d_X(0)=1$ and 
for $n\ge 1$, the number of words of degree $n$ with the last letter
from $X_i$ equals $d_X(n-i)k_i$. Applying induction and our hypothesis, we obtain:
\begin{displaymath}d_X(n)=\sum_{i=1}^n k_id_X(n-i)\le \sum_{i=1}^n k_i z^{n-i}\le z^{n}\sum_{i=1}^{\infty} \frac{k_i}{z^{i}}=z^nF(0)\le z^n.\end{displaymath}
\endproof

\begin{Lemma}\label{rSUP}
Let for some real $z>1$ the series $\displaystyle\sum_{i=1}^{\infty}\frac{k_i}{z^i}$ converge
to a number $\alpha>1$. Denote by $\delta$ the greatest common divisor of all integers in the set $I=\{ i\,|\, k_i\neq 0\}$. Then there is a positive constant $c$ such that for every sufficiently
large $n$ divisible by $\delta$, we have $d_X(n)>cz^n.$
\end{Lemma}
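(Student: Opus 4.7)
The plan is to reduce to a finite subalphabet where standard generating-function techniques (Perron--Frobenius type) apply, and to extract a lower bound matching the required exponential rate.

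First I would pick a finite subalphabet $X_0\subseteq X$ with letter counts $k^{(0)}_i=\#(X_0\cap X_i)$ chosen so that (i) $\sum_i k^{(0)}_i/z^i>1$ and (ii) $\gcd\{i:k^{(0)}_i>0\}=\delta$. Both can be arranged simultaneously: since $\sum_i k_i/z^i=\alpha>1$ converges, a sufficiently large finite truncation gives (i); and because $\delta$ is already the $\gcd$ of a set of positive integers, it is realized by the $\gcd$ of some finite subset $\{i_1,\dots,i_r\}\subset I$, so we additionally include at least one letter of each degree $i_s$ in $X_0$ (enlarging $X_0$ only increases the sum in (i)). The obvious inclusion $W(X_0)\subseteq W(X)$ gives $d_X(n)\ge d_{X_0}(n)$, so it suffices to prove the bound for $X_0$.

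Next, as in the proof of Lemma \ref{bezslov1}, the generating series $D_0(t)=\sum_n d_{X_0}(n) t^n$ satisfies $D_0(t)=1/P(t)$, where $P(t)=1-\sum_i k^{(0)}_i t^i$ is a polynomial. Because only degrees divisible by $\delta$ occur, one has $P(t)=Q(t^\delta)$ for a polynomial $Q(s)=1-\sum_j k^{(0)}_{j\delta} s^j$; by construction $\gcd\{j:k^{(0)}_{j\delta}>0\}=1$. Since $Q(0)=1>0$ and $Q(1/z^\delta)=1-\sum_i k^{(0)}_i/z^i<0$, the polynomial $Q$ has a smallest positive real root $s_0\in(0,1/z^\delta)$; set $\rho:=1/s_0$, so $\rho>z^\delta$.

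The key step is the Perron--Frobenius style argument showing $s_0$ is the \emph{unique} root of $Q$ of minimum modulus and is simple. Uniqueness: if $Q(s_1)=0$ with $|s_1|=s_0$, write $s_1=s_0 e^{i\theta}$; from $\sum_j k^{(0)}_{j\delta}s_1^j=1$ and the triangle inequality, all nonzero terms $k^{(0)}_{j\delta}s_1^j$ must have the same argument, forcing $e^{ij\theta}$ to be constant on the support, and hence (since $\gcd=1$) $e^{i\theta}=1$, i.e.\ $s_1=s_0$. Simplicity follows from $Q'(s_0)=-\sum_j j k^{(0)}_{j\delta} s_0^{j-1}<0$. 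Expanding by partial fractions, $1/Q(s)=A/(s_0-s)+R(s)$ with $R$ holomorphic on a disc of radius $>s_0$; thus the Taylor coefficients $e_m=[s^m]\,1/Q(s)$ satisfy $e_m=C\rho^m+O(r^m)$ for some $C>0$ and $0<r<\rho$. Since $D_0(t)=\sum_m e_m t^{\delta m}$, we get $d_{X_0}(\delta m)=e_m\ge (C/2)\rho^m$ for all sufficiently large $m$, while $d_{X_0}(n)=0$ when $\delta\nmid n$. Setting $n=\delta m$ and using $\rho^{1/\delta}>z$,
\[
d_X(n)\ge d_{X_0}(n)\ge (C/2)\rho^{n/\delta}=(C/2)\bigl(\rho^{1/\delta}/z\bigr)^n z^n\ge c z^n
\]
for large $n$ divisible by $\delta$, as claimed. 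The main obstacle is the Perron-type uniqueness of the minimal-modulus singularity; everything else is the reduction to the finite, rational-generating-function setting where that argument is available.
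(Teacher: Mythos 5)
Your proof is correct, but it takes a genuinely different route from the paper. The paper argues by direct induction on $n$: it truncates the \emph{series} at a finite $N$ with $\sum_{i=1}^{N}k_i/z^i>1$, uses the numerical-semigroup fact that every sufficiently large multiple of $\delta$ is a nonnegative integral combination of elements of $I$ (which both seeds the induction with nonzero values $d_X(n)>cz^n$ on a window $[M,M+N]$ and explains why only multiples of $\delta$ matter), and then propagates the bound through the recurrence $d_X(n)=\sum_i k_i d_X(n-i)$. You instead truncate the \emph{alphabet} to a finite $X_0$ (carefully keeping the gcd of the support equal to $\delta$, which is the one point where the reduction could have gone wrong and which you handle correctly), pass to the rational generating function $1/P(t)=1/Q(t^\delta)$, and run a Perron--Frobenius style singularity analysis: the triangle-inequality argument shows the positive root $s_0$ of $Q$ is the unique root of minimal modulus and $Q'(s_0)<0$ makes it simple, so partial fractions give the asymptotic $d_{X_0}(\delta m)=C\rho^m+O(r^m)$ with $\rho=1/s_0>z^\delta$ and $C=-1/(s_0Q'(s_0))>0$. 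Your approach buys a sharper conclusion --- a genuine asymptotic with an explicit exponential rate strictly above $z$ for the subalphabet count, rather than just a lower bound $cz^n$ --- at the cost of complex-analytic (or at least root-location) machinery; the paper's argument is more elementary, stays entirely within integer recurrences, and works with the infinite alphabet directly without needing the generating function to be rational.
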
 

\begin{proof}
Since the series converges to $\alpha>1$, there is $N$ such that
$\displaystyle\sum_{i=1}^{N}\frac{k_i}{z^i}>1$. 
It is well known that there is an integer $M$ such that
every $n>M$ divisible by $\delta$ is a linear combination of the numbers from $I$
with non-negative integral coefficients. Without loss of generality, let us assume that
both $M$ and $N$ are  divisible by $\delta$.
It follows that there exists a word of degree $n$ over the graded alphabet $X$,
and so there is a small real number $c>0$ such that $d_X(n) > cz^n$ for
every $n=\delta t$ belonging to the segment $[M, M+N],$ where $t$ is a positive integer.

The assertion of the lemma will be proved by induction on $t$, where $n=\delta t$, with base
$t=M/\delta$ guaranteed above by the choice of $c$. Moreover, for the inductive step,
we may assume that $n>M+N$ and so $t\ge (M+N)/\delta$.  Now we have
\begin{equation}\label{eSUP}
d_X(n)=\sum_{i=1}^{n}k_id_X(n-i)\ge\sum_{i=1}^{N}k_id_X(n-i)
\end{equation}
The right-hand side of (\ref{eSUP}) can be rewritten as $\displaystyle\sum_{i=1}^{N/d}k_{\delta i}d_X(\delta t-\delta i)$
 because obviously $d_X(j)=0$ if $j$ is not a multiple of $\delta$.
Note that $t>t-i\ge t-N/\delta>M/\delta$ for $1\le i\le N/\delta$, and so by the inductive hypothesis, 
$\displaystyle d_X(\delta t-\delta i)>cz^{\delta t-\delta i}=c\frac{z^n}{z^{\delta i}}$. Taking (\ref{eSUP}) into account, we obtain
$\displaystyle d_X(n)>cz^n\sum_{i=1}^{N/\delta}\frac{k_{\delta i}}{z^{\delta i}}$. Again, the sum on the right-hand side is equal
to $\displaystyle\sum_{i=1}^N \frac{k_{i}}{z^{i}}$, that is, greater than $1$ by the choice of $N$. Hence
$d_X(n)>cz^n$ as required.
\end{proof}

In the next result, we denote by $g_A(n)$ the growth function of $A(X)$ with respect to the degree filtration defined by the graded set $X$.

\begin{Lemma}\label{ldXn}
The function $g_A(n)$
is superexponential if the series F(0) diverges for every positive $z$. Otherwise,
provided that $\# X >1$, the growth of $g_A(n)$ is exponential. The base of the
exponent of the growth can be determined as $z_0$ in the following.
\begin{equation}\label{edXn}
\lim_{n\to\infty}\sqrt[n]{g_X(n)}=z_0=\inf \{z>1 \mid \sum_{i=1}^{\infty}\frac{k_i}{z^i}\le 1\}.
\end{equation}
Moreover, $d_X(n)\le z^n$ for any $n\ge 0.$
\end{Lemma}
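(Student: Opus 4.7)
My plan is to handle the two cases of the dichotomy separately, using Lemma \ref{bezslov1} to bound $d_X(n)$ from above and Lemma \ref{rSUP} to bound it from below, and then transferring to $g_A(n)$ by summation and monotonicity. Throughout I set $\phi(z):=F_z(0)=\sum_{i\ge 1}k_i/z^i$, viewed as a non-negative, non-increasing function of $z>0$ (possibly $+\infty$).

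In the divergent case $\phi(z)=+\infty$ for all $z>0$. I would fix an arbitrary $z>1$, choose $N$ large enough that the partial sum $\sum_{i=1}^N k_i/z^i$ exceeds $1$, and apply Lemma \ref{rSUP} to the finite subalphabet $X':=\bigsqcup_{i\le N}X_i$ (for which the series trivially converges). This yields $c>0$ and $\delta'\in\mathbb{N}$ with $d_{X'}(n)>cz^n$ for sufficiently large $n$ divisible by $\delta'$. Since $d_X(n)\ge d_{X'}(n)$ and $g_A$ is non-decreasing, this propagates to $g_A(n)\ge c'z^n$ for all large $n$; as $z>1$ was arbitrary, $\sqrt[n]{g_A(n)}\to\infty$, so the growth is superexponential.

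In the convergent case $\phi$ is finite on some half-line $(z_*,\infty)$ and decreases to $0$ at infinity, so $z_0$ is a real number. The upper bound goes through Lemma \ref{bezslov1}: for any $z>z_0$ the definition of infimum together with the monotonicity of $\phi$ gives $\phi(z)\le 1$, whence $d_X(n)\le z^n$ and $g_A(n)\le(n+1)z^n$. Letting $z\downarrow z_0$ yields $\limsup\sqrt[n]{g_A(n)}\le z_0$, and infimizing in $z$ also gives the stated bound $d_X(n)\le z_0^n$. For the lower bound, for any $z$ with $1<z<z_0$ the inequality $\phi(z)>1$ holds by definition of $z_0$, so a truncation followed by Lemma \ref{rSUP} again yields $g_A(n)\ge c'z^n$ for all large $n$, whence $\liminf\sqrt[n]{g_A(n)}\ge z_0$. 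Finally, $z_0>1$ follows from $\#X\ge 2$: one has $\phi(1)=\sum_i k_i\ge\#X\ge 2$ (interpreted as $+\infty$ if the series diverges at $z=1$), so if $z_0$ were $1$ then monotone convergence of the non-negative series as $z\downarrow 1$ would force $\phi(1)\le 1$, a contradiction.

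I expect the main obstacle to be the careful handling of the boundary value at $z_0$ and the various possibilities for where $\phi$ first becomes finite; in particular, one has to juggle the cases $\phi(z_0)=1$, $z_*>1$, and divergence of $\phi$ at $z=1$ without losing track of the direction in which $z$ approaches $z_0$. A secondary nuisance is that Lemma \ref{rSUP} only delivers lower bounds along an arithmetic progression of residues modulo $\delta'$, which is precisely why I rely on the monotonicity of $g_A$ (rather than of $d_X$) when extracting the asymptotic base.
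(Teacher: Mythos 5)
Your proof is correct and follows essentially the same route as the paper: upper bound from Lemma \ref{bezslov1} for $z>z_0$, lower bound from Lemma \ref{rSUP} for $z<z_0$, then summation, $n$\th roots, and letting $z\to z_0$. You additionally spell out details the paper leaves implicit (the truncation to a finite subalphabet when the series diverges below $z_0$, the use of monotonicity of $g_A$ to get past the $\delta$-progression restriction, the superexponential case, and the verification that $z_0>1$), which only strengthens the argument.
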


\begin{proof} By Lemma \ref{bezslov1}, $d_X(n)\le z^n$ for any $z>z_0$, whence $g_X(n)\le c_1 z^n$ for a positive
constant $c_1$. By Lemma \ref{rSUP}, $d_X(n)> c(z')^n$ for any sufficiently large $n$ divisible by $\delta$
if $z'<z_0$. Therefore $g_X(n)>c_2(z')^n$ for a positive $c_2$ and every large enough $n$.
To finish the proof of formula (\ref{edXn}), we should extract the $n\th$ roots from the  inequalities   obtained for $g_X(n)$ and pass to the limit, using that $z$ and $z'$ can be chosen arbitrary close to $z_0$.
The second statement is proved in Lemma \ref{bezslov1}.
\end{proof}

\smallskip

To further proceed we need to impose one more condition on the set $X$.
 
\textbf{Condition} $\mathrm{W}_z\:(z$ a real number $>1)$: \textit{$F(\zeta)$ is defined in a neighborhood of $0$ and $F(0)\le 1$.}

\medskip

Our main Lemma in this section deals with the following sets. Given a nonempty word $u\in W(X)$, let $N(u)$ be the set of all words in $W(X)$ that do not include $u$ as a subword. We also set $N(u)_n=N(u)\cap W(X)_n$ and define $f_u(n)=\# N(u)_n$, that is, $f_u(n)$ is the number of all words of degree $n$ without subword $u$.

\begin{Lemma} \label{bezslov} Let $X$ satisfy Conditions $\mathrm{G}$ and $\mathrm{W}_z$.  Then for every nonempty word $u\in W(X)$ there exist positive constants $C$ and $\ve$, such that 
\begin{displaymath}
f_u(n)<C(z-\varepsilon)^n.
\end{displaymath}
\end{Lemma}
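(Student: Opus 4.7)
The plan is to extend the Guibas--Odlyzko correlation polynomial technique for pattern avoidance to the graded (possibly infinite) alphabet $X$. Write $k=\deg u$ and introduce the formal power series
\begin{equation*}
\sigma(\zeta)=\sum_{i\ge 1}k_i\zeta^i,\quad B(\zeta)=\sum_{n\ge 0}f_u(n)\zeta^n,\quad E(\zeta)=\sum_{n\ge 0}e_u(n)\zeta^n,
\end{equation*}
where $e_u(n)$ counts words of degree $n$ that end with $u$ and whose prefix one letter shorter lies in $N(u)$. Condition $\mathrm{W}_z$ guarantees that $\sigma(\zeta)$ extends to a real-analytic function on an open interval strictly containing $[0,1/z]$ and that $\sigma(1/z)=F^X_z(0)\le 1$; the bound from Lemma~\ref{bezslov1} gives $f_u(n)\le d_X(n)\le z^n$, so $B$ and $E$ are a priori defined and analytic on $|\zeta|<1/z$.

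Next I would establish two functional identities. The first,
\begin{equation*}
1+B(\zeta)\sigma(\zeta)=B(\zeta)+E(\zeta),
\end{equation*}
records that appending a letter to a word in $N(u)$ either produces another word in $N(u)$ or a word of the kind counted by $e_u$. The second is the correlation identity
\begin{equation*}
B(\zeta)\,\zeta^k=E(\zeta)\,c_u(\zeta),
\end{equation*}
where $c_u(\zeta)=\sum_{s}\zeta^{\deg s}$ is summed over the suffixes $s$ of $u$ (including the empty one) for which the complementary prefix of $u$ is also a suffix of $u$. This follows from the standard bijection: for $w\in N(u)$, the concatenation $wu$ admits a unique decomposition $wu=m\cdot s$ with $m$ in the family enumerated by $E$ and $s$ a compatible suffix of $u$, determined by the position of the first (necessarily straddling) occurrence of $u$ in $wu$. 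Eliminating $E$ between the two identities yields
\begin{equation*}
B(\zeta)=\frac{c_u(\zeta)}{\zeta^k+c_u(\zeta)\bigl(1-\sigma(\zeta)\bigr)}.
\end{equation*}

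The punchline is the evaluation at $\zeta=1/z$. The polynomial $c_u$ has nonnegative coefficients with $c_u(0)=1$, so $c_u(1/z)\ge 1>0$; Condition $\mathrm{W}_z$ gives $1-\sigma(1/z)\ge 0$; and $\zeta^k=z^{-k}>0$. Hence the denominator is bounded away from zero at $\zeta=1/z$. Combined with the fact that $\sigma$ and $c_u$ are analytic on a neighborhood of $1/z$, it follows that $B(\zeta)$ extends analytically to some disc $|\zeta|<\rho$ with $\rho>1/z$. Standard Cauchy coefficient estimates then give $f_u(n)\le C\rho^{-n}=C(z-\varepsilon)^n$ with $\varepsilon=z-1/\rho>0$ and a suitable constant $C$.

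The main obstacle I anticipate is making the correlation identity rigorous in the infinite graded setting --- in particular, defining $c_u$ with respect to degrees rather than letter counts and verifying the straddling-occurrence bijection without reference to an absolute position count. Condition $\mathrm{G}$ does not enter this argument; its role in the paper is to control the lower bound in Lemma~\ref{rSUP}. As an elementary fallback one could read off the bound from the $(\ell+1)$-state Aho--Corasick automaton for $u$ (where $\ell\le k$ is the number of letters in $u$), by comparing the spectral radii of its full transition operator and its restriction to non-accepting states; the analytic content would be the same.
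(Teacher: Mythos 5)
Your route is genuinely different from the paper's: the paper never forms generating functions, but instead proves the recursion $f(n)\le -f(n-\ell)+\sum_i k_if(n-i)$ (the subtracted term coming from the block $N(u)_{n-\ell}u$ after normalizing $u$ to be unbordered) and runs an induction after absorbing the $-f(n-\ell)$ into modified coefficients $k_i'$; that absorption is exactly where Condition $\mathrm{G}$ is used (cases G1/G2), so your aside about $\mathrm{G}$ is inaccurate as a description of the paper, although it is a genuine merit of your argument that it apparently does not need $\mathrm{G}$ at all. The weighted Guibas--Odlyzko identities do survive the graded, infinite alphabet: each coefficient identity is a finite count, degree is additive under concatenation, and your decomposition of $wu$ at the first occurrence of $u$ is the standard bijection. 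Two small repairs: in $c_u$ the sum must run over \emph{proper} suffixes $s$ of $u$ only (allowing $s=u$, whose complementary prefix is empty, adds a spurious term $\zeta^k$ and already breaks the identity for $u=ab$ over a two-letter alphabet of degree-one letters), and the first occurrence need not straddle the seam --- it can be the appended copy itself, which is precisely the term $s=\emptyset$.

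The genuine gap is the final analytic step. From $D(1/z)=z^{-k}+c_u(1/z)\bigl(1-\sigma(1/z)\bigr)>0$ and analyticity of $\sigma$ and $c_u$ near $1/z$ you can only conclude that $B$ continues analytically to a neighborhood of the single point $1/z$; it does \emph{not} follow, as you claim, that $B$ extends to a disc $|\zeta|<\rho$ with $\rho>1/z$, because nothing you have established rules out zeros of the denominator (hence singularities of $B$) elsewhere on the circle $|\zeta|=1/z$, and regularity at one boundary point never enlarges a disc of analyticity by itself. What rescues the argument is the positivity of the coefficients of $B$: by Pringsheim's theorem the radius of convergence $R$ of $B$ is a singular point of $B$, so $R=1/z$ would force $1/z$ to be singular, contradicting the local continuation by $c_u/D$; hence $R>1/z$, which is exactly the bound $f_u(n)<C(z-\varepsilon)^n$. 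Note that the cheaper real-variable substitute (boundedness of $B(t)=c_u(t)/D(t)$ as $t\uparrow 1/z$, giving $\sum_n f_u(n)z^{-n}<\infty$) is not sufficient, since it only yields $f_u(n)=O(z^n)$. With Pringsheim inserted your proof is complete; the Aho--Corasick fallback has the same missing ingredient in disguise, since the strict drop of the relevant growth rate upon deleting the accepting state is again a Perron--Frobenius/positivity statement, and in the graded setting the transition ``matrix'' has power-series rather than scalar entries, so it would need at least as much care as the argument you gave.
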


\proof With $u$ fixed, let us write $f(n)=f_u(n)$. If $u$ is a subword of $u'$ then, clearly, $N_n(u)\subset N_n(u')$.  Since $X$ has at least two letters, it is easy to include $u$ as a subword in a word $u'$ such that no proper prefix of  $u'$ is a suffix of $u'$. Therefore we may assume from the very beginning that the word $u$ itself enjoys this property.

It follows from Condition $\mathrm{W}_z$, that $F(\zeta)$ is continuous in a neighbourhood of $0$. Let us choose a small positive $\varepsilon\in (0;1)$ such that $F(\varepsilon)< 1+\dfrac{1}{z^\ell}$, where $\ell=\deg (u)$. Clearly one can choose $C$ so that $f(n)< C(z-\varepsilon)^n$ for every $n=1,\dots, \ell-1$. To show that the same inequality also holds for all $n\ge \ell$, we first apply the argument of Lemma \ref{bezslov1}. Namely, we use 
\begin{equation}\label{eNU}
N(u)_n\subset \bigsqcup_{i=1}^{n}N(u)_{n-i}X_{i},
\end{equation}
to produce an upper bound 
\begin{displaymath}
f(n)\le\sum_{i=1}^{n} f(n-i)k_{i}.
\end{displaymath}

Let us note that the union on the right hand side of (\ref{eNU}) contains some words with subword $u$. For example, such are the words in $N(u)_{n-\ell}u$. Indeed, if $u=u'x$ with $x\in X_s$, and $v\in N_{n-\ell}(u)$  then $vu=(vu')x\in N_{n-s}(u)X_s$.  This follows because, considering $v$ has no subword $u$, if $vu'$ has such a subword, then $ v=v' p$, $u=pq$ and $u'=qr$, for some words $v',p,q,r$, $q$ nonempty. Then a nonempty word $q$ would be a suffix and a prefix of $u$ at the same time, which contradicts our assumption about $u$.  Since the number of  the words in  $N(u)_{n-\ell}u$ is $f(n-\ell)$, the stronger inequality holds:
\begin{equation}\label{fn}
f(n)\le-f(n-\ell)+\sum_{i=1}^n k_if(n-i)
\end{equation}

Now according to our Conditon G  we either have G1: $k_2=k_3=\dots=0$ or G2: for every $i$, if  $k_i>0$ then $k_{i+1}>0$. Let us handle these cases separately.

In the case G1,  all letters are contained in $X_1$ and $f(n-1)\le f(n-\ell)z^{\ell-1}$ by Lemma \ref{bezslov1} since  in this case, every word of degree $n-1$ is a product  of a word of degree $n-\ell$ and a word of degree $\ell-1$.
Therefore we obtain from (\ref{fn}):
\begin{equation}\label{e478}
f(n)\le -f(n-1)z^{1-\ell}+\sum_{i=1}^n k_if(n-i)\le\sum_{i=1}^n k'_if(n-i),
\end{equation}
where $k'_1=k_1-z^{1-l}\ge 0$ and $k'_j=k_j$ for $j\ne 1$. Note that in this case, 
\begin{equation}\label{e479}
\frac{k'_1}{z-\varepsilon}=\frac{k_1}{z-\varepsilon}-\frac{1}{z^{\ell-1}(z-\varepsilon)}\le
\dfrac{k_1}{z-\varepsilon}-\dfrac{1}{z^{\ell}}.
\end{equation}

In the case G2, since the last letter $x$ of $u$ is in $X_s$, we have $k_s>0$. Also $\ell=\deg (u)=\deg (u'x)\ge \deg (x) =s$. As a result, we have $k_\ell>0$ and so we can write
\begin{equation}\label{e482}
f(n)\le -f(n-\ell)+\sum_{i=1}^n k_if(n-i)\le \sum_{i=1}^n k'_if(n-i),\end{equation}
where $k'_\ell=k_\ell-1\ge 0$ and $k'_j=k_j$ for $j\ne \ell.$ 
In this case we have
\begin{equation}\label{e483}
\dfrac{k'_\ell}{(z-\varepsilon)^\ell}\le \dfrac{k_\ell}{(z-\varepsilon)^{\ell}}-\dfrac{1}{z^{\ell}}.
\end{equation}  

Now we can make conclusions which are the same both for G1 and G2. Since the coefficients $k'_i$ are non-negative, we conclude from the inductive hypothesis and equations (\ref{e478}) and (\ref{e482}) that
\begin{equation}\label{fn1}
f(n)\le\sum_{i=1}^n k'_if(n-i)\le \sum_{i=1}^n Ck'_i(z-\varepsilon)^{n-i}=C(z-\varepsilon)^n\sum_{i=1}^n \frac{k_i'}{(z-\varepsilon)^{i}}
\end{equation}
Hence, both in G1 and G2,  it  follows from Equation (\ref{fn1}), the definition of the coefficients $k'_i$ and equations (\ref{e479}) and (\ref{e483})that
\begin{eqnarray*}
f(n)&\le& C(z-\varepsilon)^n\left(-\frac{1}{z^{\ell}}+\sum_{i=1}^n \frac{k_i}{(z-\varepsilon)^{i}}\right)\\&\le& C(z-\varepsilon)^n \left(-\frac{1}{z^{\ell}}+F(\varepsilon)\right)\\&<& C(z-\varepsilon)^n,
\end{eqnarray*}
because $F(\varepsilon)<1+\dfrac{1}{z^\ell}.$ The proof is now complete. \endproof

\section{Cogrowth of Subideals}\label{sCS}

This section contains the proof of Theorem \ref{t2I}. We will be using the notions and results about free associative and Lie algebras given in Subsection \ref{ssRLFB}.

\begin{Lemma}\label{ldegsub} Let $L=L(X)$ be a free Lie algebra with free linearly ordered basis $X$. Let $w$ be an LS-word, $x\in X$ the maximal letter involved in $w$ and $c=[w]$ the (unique) LS-commutator with associative support $w$. If  $x^\ell$ is a subword of $w$ for some $\ell>0$, then $c$ is an element of the $\ell$-subideal closure $\mathrm{id}_L^\ell x$ of $L$ generated by $x$. 
\end{Lemma}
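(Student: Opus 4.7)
The plan is to induct on $\deg(w)$ and exploit Corollary \ref{cnLS}, which lets us write $c$ in left-normed form $c = [x, [u_1], \ldots, [u_s]]$, where $u_1 \le \cdots \le u_s$ are the LS-words of the unique LS-factorization of the tail of $w$. Writing $I_j = \mathrm{id}_L^{j} x$, the goal is to show that each $[u_i]$ already lies in $I_{\ell-1}$ while $x \in I_\ell$. Since $I_\ell$ is an ideal of $I_{\ell-1}$, left-normed bracketing with elements of $I_{\ell-1}$ preserves membership in $I_\ell$, so $c \in I_\ell$ follows.

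The key combinatorial input is the observation that \emph{if an LS-word $w$ with maximal letter $x$ contains $x^\ell$ as a subword, then $w$ already starts with $x^\ell$}. To see this, let $k$ be the length of the initial $x$-run of $w$; by Corollary \ref{cnLS}, $w$ begins with $x$, so $k \ge 1$, and if $w$ is not the single letter $x$ then position $k+1$ carries some letter $y < x$. Suppose, for contradiction, that $x^\ell$ begins at some position $p > k$. Then the proper suffix $w'$ of $w$ starting at $p$ has $x^\ell$ as prefix; if $k < \ell$, comparing $w$ with $w'$ at position $k+1$ yields $y$ against $x$, hence $w < w'$ in the ordering of Subsection \ref{ssRLFB}, contradicting LS-ness. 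Thus $k \ge \ell$. Applied to the decomposition $w = x u_1 \cdots u_s$ from Corollary \ref{cnLS}, this forces $u_1$ to begin with $x^{\ell-1}$ whenever $\ell \ge 2$.

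For the induction, the base case $\deg(w) = 1$ gives $w = x$, $\ell = 1$, and $c = x \in I_1$. When $\deg(w) \ge 2$ and $\ell = 1$, the assertion is immediate as $I_1 = \mathrm{id}_L x$ is an ideal of $L$ containing $x$. When $\ell \ge 2$, each $u_i$ satisfies $u_1 \le u_i$ and has all letters at most $x$ (as $u_i$ is a subword of $w$); a ``first-difference'' position at most $\ell-1$ between $u_i$ and $u_1$ would force a letter of $u_i$ to strictly exceed $x$, which is impossible, so either $u_i$ begins with $x^{\ell-1}$, or $u_i$ is a proper prefix of $u_1$ consisting entirely of $x$'s, in which case LS-ness collapses it to $u_i = x$. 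In the collapsed case $[u_i] = x \in I_{\ell-1}$ trivially; in the other case the induction hypothesis, applied to the shorter LS-word $u_i$ whose maximal letter is again $x$, gives $[u_i] \in I_{\ell-1}$. The algebraic assembly $c \in I_\ell$ now proceeds as sketched above. The principal difficulty is the combinatorial claim about where $x^\ell$ can sit inside an LS-word; once that structural fact is pinned down, the case analysis on $u_i$ and the purely ideal-theoretic conclusion are routine.
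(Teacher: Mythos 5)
Your proposal is correct and takes essentially the same route as the paper's proof: both establish that $w$ must begin with $x^\ell$ (the paper asserts this from the lexicographic order, you prove it via the suffix property), both use the left-normed decomposition $c=[x,[u_1],\ldots,[u_s]]$ from Corollary \ref{cnLS} to reduce to the $u_i$ containing $x^{\ell-1}$, and both finish with the same ideal-theoretic assembly, your induction on $\deg(w)$ replacing the paper's induction on $\ell$. The only point you assert rather than argue is that $u_1$ itself begins with $x^{\ell-1}$ (i.e., that $u_1$ is not a power of $x$); the paper covers this in one clause, and it follows from the LS-condition on the innermost bracket $[x,[u_1]]$, which forces $x>u_1$ and hence $u_1\ne x$.
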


\proof  By definition of the lexicographical order, it follows that $w=x^\ell w'$. According to Corollary \ref{cnLS}, $c=[x,[u_1],[u_2],\ldots,[u_s]]$, where $u_1\le u_2\le\ldots\le u_s$. If $\ell=1$ then $c$ is an element of the ideal generated by $x$. So we can proceed by induction on $\ell$ with basis $\ell=1$. Assume $\ell>1$. Since $[u_1]$ and $c$ are LS-commutators, $u_1$ cannot  be a power of $x$, hence $u_1$ has a proper prefix $x^{\ell-1}$. By the property of the lexicographical order, $x^{\ell-1}$ is also a prefix in each $u_2,\ldots,u_s$. Then by induction $[u_1],\ldots,[u_s]$ are all in the $(\ell-1)$-subideal generated by $x$, hence, $c$ is an element of of the $\ell$-subideal $\id_L^\ell x$ generated by $x$. 
\endproof

\begin{Lemma}\label{w1}  Let $L=L(X)$ be a free Lie algebra with the graded free basis $X$ satisfying Conditions $\mathrm{G}$ and $\mathrm{W}_m$.
Let $c$ be a nonzero linear combination of several free generators of  the same degree in $L$. Then for any $\ell\ge 1$, there are $C,\varepsilon>0$ such that the cogrowth function of the $\ell$-subideal $H$ generated by $c$ in $L$  does not exceed $C(m-\varepsilon)^n$.
\end{Lemma}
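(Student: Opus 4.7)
The plan is to reduce to the case where $c$ is a single basis letter, and then apply the Shirshov basis of $L$ together with Lemmas \ref{ldegsub} and \ref{bezslov}. Write $c=\sum_i\alpha_iy_i$ with $y_i\in X_d$ for some fixed $d$ and not all $\alpha_i=0$. We may substitute any $y_j$ with $\alpha_j\ne 0$ by $c$ inside $X_d$; this is an invertible linear change of coordinates inside the graded component $X_d$, so the resulting set $X'$ is still a free basis of $L$, has the same graded cardinalities $k_n$, induces the same grading and the same filtration $\{L^{(n)}\}$, and still satisfies Conditions $\mathrm{G}$ and $\mathrm{W}_m$. In particular the growth and cogrowth functions of $H$ are unaffected, so we may assume from the outset that $c=x$ for some $x\in X$ and set $H=\mathrm{id}^{\ell}_{L}x$.

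Next I would order $X$ linearly so that $x$ is the maximum letter, and consider the Shirshov basis of $L$ consisting of LS-commutators (Lemma \ref{lnS}). An LS-word begins with its maximum letter, so any LS-word in which $x$ occurs must start with $x$. If such a word has $x^{\ell}$ as a subword, then Lemma \ref{ldegsub} places the corresponding LS-commutator inside $\mathrm{id}^{\ell}_{L}x=H$. Consequently, the LS-commutators whose associative support avoids $x^{\ell}$ already span $L$ modulo $H$, and hence their degree-at-most-$n$ portion spans $L^{(n)}$ modulo $H\cap L^{(n)}$.

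Counting these spanning elements yields
\begin{displaymath}
\dim\bigl(L^{(n)}/(H\cap L^{(n)})\bigr)\le\sum_{k=1}^{n}f_{x^{\ell}}(k),
\end{displaymath}
where $f_{x^{\ell}}(k)$ denotes the number of words of degree $k$ in $W(X)$ that do not contain $x^{\ell}$ as a subword. Since $X$ satisfies $\mathrm{G}$ and $\mathrm{W}_m$ and $x^{\ell}$ is nonempty, Lemma \ref{bezslov} yields constants $C_0>0$ and $\varepsilon>0$, which we may shrink so that $m-\varepsilon>1$, with $f_{x^{\ell}}(k)\le C_0(m-\varepsilon)^k$. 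Summing the geometric series gives the required bound $C(m-\varepsilon)^n$ for a suitable constant $C$.

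The one subtlety is that the two preliminary choices, namely replacing a letter of $X_d$ by $c$ and reordering $X$, must not disturb the hypotheses. Neither does: the substitution is confined to the single graded piece $X_d$, so the cardinalities $k_n$, and hence Conditions $\mathrm{G}$ and $\mathrm{W}_m$, are unchanged; and the linear order on $X$ is extraneous to the filtration and to the cogrowth function. Once these reductions are in place, the argument is essentially a direct application of Lemmas \ref{ldegsub} and \ref{bezslov}, and I do not expect any further obstacle.
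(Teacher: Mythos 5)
Your proof is correct and follows essentially the same route as the paper: reduce to the case $c=x\in X$ by a degree-preserving change of free basis (the paper cites \cite[Lemma 2.4.1]{B} for exactly this step), order $X$ with $x$ maximal, use Lemma \ref{ldegsub} to discard all LS-commutators whose support contains $x^{\ell}$, and bound the remaining ones by Lemma \ref{bezslov}, summing over degrees. The only difference is the order of the two reductions, which is immaterial.
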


\proof We will use the fact that given a linear order on $X$, a linear basis of $L_n$ is given by LS-commutators of degree $n$. Hence the linear basis of $L$ mod $H$ can be chosen as a subset of the set of LS-commutators.

Let us first consider the case where $c=y$, an element of $X$. We can order $X$ so that $y$ is the greatest element with respect to this order. According to Lemma  \ref{ldegsub}, then any LS-commutator whose associative support contains a subword $y^\ell$ is an element of $H$. According to Lemma \ref{bezslov}, there are positive $C,\ve$ such that the number of associative words without a subword $y^\ell$  is bounded from above by $C(m-\ve)^n$, for any $n$. Thus $d_{L/H}(n)<C(m-\ve)^n$. Since $g_{L/H}(n)=d_{L/H}(1)+\cdots+d_{L/H}(n)$, we immediately observe that there is $C'>0$ such that $g_{L/H}(n)\le C'(m-\ve)^n$, as needed.

Now assume $c$ is not necessarily one letter and $c$ involves a generator $y$ with a nonzero coefficient. We have $\deg (c)= \deg (y)$. According to \cite[Lemma 2.4.1]{B}, the set $X'=(X\backslash \{ y\})\cup \{ c\}$ is another set of free generators of $L$. Notice that the degrees of elements of $L$ with respect to $X'$ remain to be the same as with respect to $X$. By the previous paragraph, $d_{L/H}(n)<C(m-\ve)^n$, and the proof is complete.
\endproof

\begin{Lemma}\label{w}  Let $L=L(X)$ be a free Lie algebra with the graded free basis $X$ satisfying Conditions $\mathrm{G}$ and $\mathrm{W}_m$.
Let $w\in L$ be any non-zero homogeneous element. Then for any $\ell\ge 1$, there are $C,\varepsilon>0$ such that the cogrowth function of the $\ell$-subideal $H$ generated by $w$ in $L$  does not exceed $C(m-\varepsilon)^n$.
\end{Lemma}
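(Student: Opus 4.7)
The strategy is to reduce Lemma~\ref{w} to Lemma~\ref{w1} by a change of graded free basis constructed from iterated Lazard elimination (Lemma~\ref{pnC1}). While $w$ need not itself be a linear combination of generators of $X$ of the same degree, after finitely many Lazard steps one can arrange for $w$ to take precisely this form with respect to the new graded basis of an ideal of $L$ of finite codimension, at which point Lemma~\ref{w1} applies directly.

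First, expand $w=\sum_i c_i[v_i]$ in the Shirshov basis (Lemma~\ref{lnS}); since $w$ is homogeneous of degree $d$, every LS-commutator $[v_i]$ has $L$-degree $d$. The plan is to choose a finite sequence of Lazard eliminations $z_1,\ldots,z_N$, applied successively to the current basis, so that in the resulting graded basis $X^{(N)}$ of the ideal $L^{(N)}\subseteq L$, each LS-commutator $[v_i]$ has become a single generator. For an individual iterated bracket $[v_i]$, this is achieved by eliminating the letters occurring in $[v_i]$ in an order that peels off the innermost sub-brackets first; each elimination absorbs one level of nesting into a new basis element. Because $w$ contains only finitely many summands, a single finite sequence of eliminations works for all of them simultaneously, leaving $w$ as a linear combination of elements of $X^{(N)}$ all of $L$-degree $d$.

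Next, Conditions~$\mathrm{G}$ and $\mathrm{W}_m$ are preserved at every step. A direct calculation shows that when a generator $z$ of degree $d_z$ is eliminated from a graded basis with function $F^X_m$, the new graded basis $B=\{[y,z^k]:y\in Y,k\ge 0\}$ satisfies
\[
F^B_m(0)\;=\;\frac{F^X_m(0)-m^{-d_z}}{1-m^{-d_z}},
\]
which is still $\le 1$ whenever $F^X_m(0)\le 1$ (with equality preserved). Condition~$\mathrm{G}$ is inherited and strengthened (once any generator of degree $\ge 2$ is present we are automatically in case $\mathrm{G}2$). Moreover, $L=L^{(N)}\oplus V$ as a direct sum of vector spaces where $V$ is spanned by the $N$ eliminated generators; in particular $V$ is finite-dimensional and $\dim V_n$ contributes only $O(1)$ to each graded component.

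Finally, Lemma~\ref{w1} applied to $L^{(N)}$ with graded basis $X^{(N)}$ and element $w$ (now in the required form) yields $\dim\bigl(L^{(N)}/\mathrm{id}_{L^{(N)}}^{\ell}(w)\bigr)_n\le C_1(m-\varepsilon)^n$. A straightforward induction along the chain $L\supseteq L^{(1)}\supseteq\cdots\supseteq L^{(N)}$ gives $\mathrm{id}_{L^{(N)}}^{\ell}(w)\subseteq\mathrm{id}_L^{\ell}(w)=H$, whence
\[
\dim(L/H)_n\;\le\;\dim\bigl(L^{(N)}/\mathrm{id}_{L^{(N)}}^{\ell}(w)\bigr)_n+\dim V_n\;\le\;C(m-\varepsilon)^n.
\]
The main obstacle is the combinatorial step of producing a single elimination sequence $z_1,\ldots,z_N$ that converts every LS-commutator occurring in $w$ into a generator of $X^{(N)}$ simultaneously; this rests on an induction on the total Lie-bracket depth of the terms of $w$, together with care in choosing each $z_i$ so that $w$ remains in the successor ideal (which is automatic whenever $\deg z_i\ne\deg w$, and otherwise can be arranged by selecting a suitable representative).
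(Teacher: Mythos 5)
Your overall scheme (iterated Lazard elimination, Lemma \ref{pnC1}, to reduce to Lemma \ref{w1}, with the bookkeeping $F^B_m(0)=\bigl(F^X_m(0)-m^{-d_z}\bigr)/\bigl(1-m^{-d_z}\bigr)$ and the finite-codimension correction at the end) is the right framework, but there is a concrete gap: your claim that Condition $\mathrm{G}$ is ``inherited and strengthened'' under an arbitrary elimination is false, and your parenthetical shows a misreading of the condition --- being in case $\mathrm{G}2$ does not mean ``some generator of degree $\ge 2$ exists''; it means that \emph{every} occupied degree is followed by an occupied degree. Since your eliminations are dictated by the structure of $w$ (you must eliminate specific sub-brackets, typically of degree $>1$ while degree-$1$ generators are still present), gaps in the degree sequence appear immediately. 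For example, start with $L(x,y)$, eliminate $y$ to get the basis $\{x,[x,y],[x,y,y],\ldots\}$ (degrees $1,2,3,\ldots$), and then eliminate the degree-$2$ generator $[x,y]$: the new graded basis has generators in degrees $1,3,4,5,\ldots$ but none in degree $2$, so Condition $\mathrm{G}$ fails while $\mathrm{W}_m$ survives. At that point Lemma \ref{w1} (whose proof rests on Lemma \ref{bezslov}, which genuinely uses $\mathrm{G}$) is no longer available, so your reduction does not go through as written. In addition, the ``simultaneous conversion'' of all LS-commutators of $w$ into generators, which you yourself flag as the main obstacle, is only sketched; it needs a careful induction (re-expression after each elimination uses anticommutativity, shared sub-brackets must be handled in a consistent order, and one must check the relevant elements remain basis members at later steps).

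For comparison, the paper sidesteps both difficulties: it always eliminates the generator of \emph{minimal} degree (this is exactly what preserves Condition $\mathrm{G}$, as well as $\mathrm{W}_m$), shows that the resulting descending chain of ideals $L\supset L'\supset L''\supset\cdots$ has zero intersection, and then observes that at the last step $t$ with $w\in L^{(t)}\setminus L^{(t+1)}$, homogeneity of $w$ forces its expression in the basis $X^{(t)}$ to be a linear combination of generators of a single degree (any bracket of length $\ge 2$ would have strictly larger degree than the minimal generator which $w$ must involve linearly). So no tailor-made engineering of $w$ is needed, and Lemma \ref{w1} applies directly. If you replace your $w$-driven elimination sequence by this minimal-degree scheme, the rest of your argument (preservation of $\mathrm{W}_m$, the containment $\mathrm{id}^{\ell}_{L^{(t)}}(w)\subseteq H$, and the finite-codimension correction) is sound.
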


\proof  Let us consider a linear order on $X$ such that $x<y$ if $\deg(x)<\deg(y).$
We choose the minimal generator $x\in X$. The minimal ideal $L'$ of $L$ containing
all other generators  has codimension $1$ in $L. $ By Lemma \ref{pnC1}, the Lie algebra $L'$ has a free basis $X'$ consisting of all 
commutators $z$ of the form $[y,x,...,x]$, where $y\in X\backslash\{x\}.$  The free basis $X'$ of $L'$
is a graded subset of $L$  and therefore the degree of every element of $L'$
with respect to $X'$ is equal to its degree with respect to $X$. 

Note that also $X'$ satisfies Condition $\mathrm{W}_m$. Indeed, assume $\deg (x)=s$. Removing $x$ from $X$ we decrease the value $F(0)$ by $\dfrac{1}{m^s}$. Then adding all the commutators $[y,x,...,x]$ to
$X\backslash\{x\}$ we multiply the contribution of every such $y\in X\backslash\{x\}$ by $\left(1+\dfrac{1}{m^s}+\dfrac{1}{m^{2s}}+\cdots\right)$. Hence the
$F^{X'}_m(0)$ does not exceed
\begin{displaymath}
\left(1-\dfrac{1}{m^s}\right)\left(1+\dfrac{1}{m^s}+\dfrac{1}{m^{2s}}+\cdots\right)=1.
\end{displaymath}
Also the function $F^{X'}_m(\zeta)$ for $X'$ remains defined in a neighbourhood of $0$. Indeed, we obtain this new function $F^{X'}_m(\zeta)$ by first subtracting $\dfrac{1}{(m-\zeta)^s}$ from the old function $F^X_m(\zeta)$. Then we multiply the difference obtained by the function  \begin{displaymath}
(1+\dfrac{1}{(m-\zeta)^s}+\dfrac{1}{(m-\zeta)^{2s}}+\dots)=\left(1-\dfrac{1}{(m-\zeta)^s}\right)^{-1},
\end{displaymath}
which is defined in the neighborhood of zero. Thus all components of Condition $\mathrm{W}_m$ remain in place.

Because we remove $x$ of the smallest degree in $X$, also Conditions G still holds for $X'$.

We will repeat the above construction applying it to $L'$ and $X'$, and so on. After $t$ steps we obtain the Lie algebra $L^{(t)}$ and its free basis $X^{(t)}$.

Now let us show that $\displaystyle\bigcap_{t=1}^{\infty} L^{(t)}=\{ 0\}$. Indeed, by our construction, the number of generators of degree 1 in $X'$ is strictly less (if any) than in $X$ (at the same time it is possible that we get more generators of degree 2). After several steps of application of our construction, there are no more generators of degree 1. Then, in the same manner, we get rid of all generators of degree 2, and so on.  As a result, if $u$ is an element of degree $k$ and on the $t^{\mathrm{th}}$ step we got rid of all generators of degree less than or equal to $k$, we have $u\not\in L^{(t)}$.

So let us assume that $w\in L^{(t)}\setminus L^{(t+1)}$. We want to show that $w$ is a linear combination of some elements of $X^{(t)}$. Suppose to the contrary, that the expression for $w$ in $X^{(t)}$ nontrivially includes some commutators of length $n$ where $n>1$. Because $w$ is homogeneous, it cannot include commutators of length 1 equal to the generators of the minimal degree, and so $w\in L^{(t+1)}$ by definition of the transition $L^{(t)}\to L^{(t+1)}$. This contradiction proves that a finite sequence of transitions described above will brings us to the case where $w$ is a linear combination of the free generators of some $L^{(t)}$. This case has been dealt with in Lemma \ref{w1}.

We have that the $\ell$-subideal $H'$ generated by $w$ in $L^{(t)}$
has codimension growth in $L^{(t)}$ with respect to $X^{(t)}$ 
at most $C(m-\varepsilon)^n$. The same is true with respect to $X$, since the degrees of
the elements in $L^{(t)}$ with respect to these bases are equal. Since the codimension of $L^{(t)}$ in $L$ is $t$,  the
codimension growth of $H'$ with respect to $X$ in the entire $L$ is bounded by  $t+C(m-\varepsilon)^n$, where $t$ does not depend on $n$. Since $H'\subset H$, we have the same upper bound for the cogrowth
of $H$, and altering $C$ we complete the induction.
\endproof

We are now in position to complete the proof of one of the main results of this paper, Theorem \ref{t2I}.

\proof 
Every nonzero subideal $S$ contains, for some $\ell\ge 1$, an $\ell$-subideal generated by one nonzero element $w$. If we prove our claim for such subideals $S$, we will be finished.  Since we already have this proven in Lemma \ref{w} for the case where $w$ is homogeneous (in that case, with respect to the standard grading associated with the finite free basis of $L$), using Part (b) of Lemma \ref{lGR} it is sufficient to show the following. 

Let $w $ be a nonzero element of subalgebra $H$ and $u$ its leading part. Then for $\ell \ge 1 $ the $\ell$-subideal $K_\ell$ generated by $u $ in $K= \gr H $ is contained in $\gr H_\ell$, where $H_\ell$ is an $\ell $-subideal generated by $w$ in $H$. 

Indeed, by definition of $K_1$, any nonzero homogeneous element $v\in K_1$ is a linear combination of nonzero commutators  $[u,u_1,\ldots,u_s]$, where all $u_i$ are homogeneous elements in $K$. In this case each $u_i$ is a leading part of an element $w_i\in H$. Therefore, $v$ itself is a leading part of a respective linear combination of commutators  $[w,w_1,\dots,w_s]\in H_1$, so that $K_1\subset \gr H_1$.

If $\ell > 1 $ then we have $K_{\ell -1}\le \gr H_{\ell -1} $, by induction. Since $K_\ell$ is an ideal in $K_{\ell-1}$ generated by the element  $u$, it is contained in the ideal $M$ generated by $u$ in $\gr H_{\ell -1}$. But $H_\ell $ is an ideal generated by  $w$ in $H_{\ell -1}$. Hence, if we apply what we have proved for the ideals (that is, when $\ell =1$), we obtain $M\subset \gr H_\ell$. As a result, $K_l\subset \gr H_\ell$, as needed.
\endproof

\section{An example}\label{sE}

In this section, based on Lemmas \ref{ldegsub} and \ref{lPC}, we find the cogrowth function for the $2$-subideal $\mathrm{id}_L^2 x$ generated by $x$ in the free Lie algebra $L=L(x,y)$. We will need the following converse to Lemma \ref{ldegsub} in our particular case.

\begin{Lemma}\label{lPC} Let $L=L(x,y)$. Then the set $T$ of LS-commutators, different from $x$, whose associative supports do not have $x^2$ as a subword, are linearly independent modulo $H=\id_L^2 x$.
\end{Lemma}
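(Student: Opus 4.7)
The plan is to build an explicit vector-space complement $L''$ of $H$ inside $L$, show that $T \subseteq L''$, and then conclude via the linear independence of the Shirshov basis. The complement comes from two successive applications of Lemma \ref{pnC1}. First, eliminating $y$ gives $I := \mathrm{id}_L x$ as a free Lie algebra on $\{e_k : k \ge 0\}$, where $e_k := [x, \underbrace{y, \ldots, y}_k]$ (so $e_0 = x$ and $\deg e_k = k+1$), and $L = Fy \oplus I$ as vector spaces. Inside $I$, the subalgebra $I'$ generated by $\{e_k : k \ge 1\}$ is free on these generators and maps isomorphically under the composite $I' \hookrightarrow I \twoheadrightarrow I/H$ onto the free Lie algebra on $\{e_k : k \ge 1\}$; hence $I = H \oplus I'$ and $L = H \oplus L''$, where $L'' := Fy \oplus I'$. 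Because $[y, e_k] = -e_{k+1} \in I'$ for $k \ge 1$ and $\mathrm{ad}(y)$ is a derivation, $[Fy, I'] \subseteq I'$, so $L''$ is in fact a Lie subalgebra of $L$.

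The heart of the argument is the claim $T \subseteq L''$, which I would prove by induction on $|w|$, the length of the LS-word underlying $t = [w] \in T$, with the sharpened conclusion that $t \in I'$ whenever $\deg t \ge 2$. The base case $|w| = 1$ forces $w = y$, so $t = y \in Fy \subseteq L''$. For $|w| \ge 2$, Corollary \ref{cnLS} writes $t = [y, [u_1], \ldots, [u_s]]$ with LS-words $u_1 \le \cdots \le u_s$ and $w = y u_1 \cdots u_s$. If $u_1 = x$, then $[y, x] = -e_1 \in I'$, after which one brackets successively with the later $[u_i]$ (each in $I'$ by induction for $i \ge 2$), staying in $I'$ by $[I', I'] \subseteq I'$. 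If $u_1 \ne x$, then $[u_1] \in I'$ by induction, $[y, [u_1]] \in [Fy, I'] \subseteq I'$, and iterated brackets with the $[u_i] \in I'$ again keep the result in $I'$.

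The main obstacle is to verify that every $u_i$ entering the induction (namely $i \ge 2$ in the first case, and all $i$ in the second) is an LS-word of length $\ge 2$, so that the inductive hypothesis really does supply $[u_i] \in I'$. Two possibilities must be excluded. First, $u_i = x$ cannot occur for $i \ge 2$: since $u_1 \le u_i$ and $x$ is the minimum LS-word, $u_i = x$ would force $u_1 = x$ as well, whence $u_1 u_2 = xx$ would be a subword of $w$, contradicting $w \in T$. Second, $u_i = y$ cannot occur at any index: under the paper's prefix-ordering $y$ is the maximum LS-word (being a prefix of every LS-word of length $\ge 2$), so $u_i = y$ forces $u_s = y$ and hence $w$ ends in $y$; but then $y$ is simultaneously a proper suffix and a prefix of $w$, giving $y > w$ by the prefix rule and violating the LS condition $w > y$.

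Once $T \subseteq L''$ is established, the conclusion is immediate: any finite linear combination $\sum_{t \in T} \alpha_t t$ lies in $L''$, so if it also lies in $H$ it belongs to $L'' \cap H = 0$, and the linear independence of $T$ inside $L$ (as a subset of the Shirshov basis furnished by Lemma \ref{lnS}) forces all $\alpha_t = 0$.
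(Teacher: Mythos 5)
Your vector--space framework is sound and is in fact the paper's own: your $I'$ is exactly the subalgebra $M$ with free basis $[x,y],[x,y,y],\ldots$ from Lemma \ref{pnC1}, the decomposition $L=H\oplus I'\oplus Fy$ is the paper's statement that $M$ meets $\id^2_Lx$ trivially and $y\notin\langle M,x\rangle$, and the final step (a combination of elements of $T$ lying in $H$ lies in a complement of $H$, hence vanishes, hence has zero coefficients by Lemma \ref{lnS}) is correct. The genuine gap is in the combinatorial heart, the claim $T\subseteq Fy\oplus I'$: you apply Corollary \ref{cnLS} as if $y$ were the \emph{maximal} letter, writing $t=[y,[u_1],\ldots,[u_s]]$, $w=yu_1\cdots u_s$, and asserting that ``$y$ is the maximum LS-word, being a prefix of every LS-word of length $\ge 2$'' and that $u_i=y$ never occurs. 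But the order relevant to this lemma is $x>y$: Lemma \ref{ldegsub}, with which Lemma \ref{lPC} is paired in Section \ref{sE}, requires $x$ to be the maximal letter (otherwise it says nothing about $\id_L^2x$), and the counting there explicitly fixes ``the LEX-grading such that $x>y$''. Under that order every LS-word of length $\ge 2$ begins with $x$, Corollary \ref{cnLS} gives $t=[x,[u_1],\ldots,[u_s]]$, and your exclusions fail: $u_i=y$ occurs constantly (e.g.\ $t=[x,y,y]$ has $u_1=u_2=y$), and $x$ is the \emph{maximal}, not minimal, LS-word (it is a prefix of every longer LS-word). So, as written, your induction analyzes the LS-basis attached to the opposite order, i.e.\ a set of commutators different from the set $T$ of the lemma, and the statement it yields cannot be combined with Lemma \ref{ldegsub} in the way Section \ref{sE} requires.

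The repair stays inside your framework. Under $x>y$, since $w$ has no subword $x^2$ and $|w|\ge 2$, the word following the initial $x$ must begin with $y$; as $y$ is the only LS-word beginning with $y$ and is the minimal LS-word, the non-decreasing sequence of Corollary \ref{cnLS} starts $u_1=\cdots=u_k=y$ for some $k\ge 1$, while the remaining $u_j$ are LS-words of length $\ge 2$ without $x^2$ (a possible $u_j=x$ is excluded since $x$ is maximal, which would force $w$ to end in $x$, contradicting that an LS-word exceeds its proper suffixes). Hence $t=[e_k,[u_{k+1}],\ldots,[u_s]]$ with $e_k\in I'$ and each $[u_j]\in I'$ by induction on length, so $t\in I'$ and your conclusion $T\subseteq Fy\oplus I'$ is restored; with this correction your argument becomes an explicit, spelled-out version of the paper's proof.
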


\begin{proof} Any LS-commutator without $x^2$ in the support (excluding $x$ and $y$) is an element of the subalgebra $M$ of $L$ with free basis $\{ [x,y],[x,y,y],\ldots\}$. At the same time, $M$ is a free factor in the ideal $N=\langle M,x\rangle=\id^1_L x$. Both claims follow by Lemma \ref{pnC1}. Therefore, $M$ trivially intersects the ideal generated by $x$ in $N$ which is exactly $H=\id^2_L x$. It then follows that all the set $T$ of commutators in question, excluding $y$, being linearly  independent in $L$ and $M$, remains linearly independent modulo $\id^2_L x$.  Restoring $y$ to the set does not change the situation because $y\notin \langle M,x\rangle$, while all the rest (including $\id^2_L x$) is a subset of $\langle M,x\rangle$.
\end{proof}

We will use a linear basis of $L$ composed of LS-commutators. It follows from Lemmas \ref{ldegsub} and \ref{lPC}, that we need to find the growth of the set of associative  LS-words different from $x$, that do not have $x^2$ as a subword.

We first count the number of words of length $n$ each being the product of the subwords $y$ and $yx$. Let $a_n$ denote the number of such words with suffix $x$ and $b_n$ with suffix $y$. Then $a_n =a_{n-2}+b_{n-2}$ (these are obtained by attaching $yx$ to the words of length $n-2$) and $b_n=a_{n-1}+b_{n-1}$. Then also $b_{n-1}=a_{n-2}+b_{n-2}=a_n$. It follows that $b_n$ is the $n\th$ term of the Fibonacci sequence $1,1,2,3,5,\ldots$,  while $a_n$ is the $(n-1)\st$ term of the same sequence (we assume the zeroth term being $0$).

The total number of these words of length $n$ is now $a_n+b_n=b_{n-1}+b_n$, hence the $(n+1)\st$ term of the Fibonacci sequence. Actually, we have counted the number of words $w$ of degree $n$, starting with $y$, such that no cyclic shift of $w$ contains $x^2$, as a subword. We will say that $w$ does not contain a \textit{cyclic subword} $x^2$.

The set of words we have just counted is the set of all words of length $n$ without subword $x^2$, with the exclusion of those words that have prefix $x$. It is obvious that the number of words of length $n$ without cyclic subword $x^2$ with prefix $x$ is equal to the number of words without cyclic subword $x^2$ with suffix $x$. Each of these latter can already be written via $y$ and $yx$ and has suffix $x$. Earlier, we denoted the number of such words as $a_n$. As a result, we have the total number $c_n$ of the words of length $n$ without cyclic subwords $x^2$ equal to $(a_n+b_n)+a_n$ (with the exception of $x$). Thus, $c_n$ is the sum of the $(n+1)\st$ and $(n-1)\st$ terms of the Fibonacci sequence.

Now we want to delete from the above number those subwords that are powers of shorter words. Notice that every proper power is a proper prime power. Note that if $v$ is a word without cyclic subwords $x^2$ then each power of $v$ enjoys the same property. Now the computation of the number of words which are prime powers is routine. If $p|n$, where $p$ is prime, then $c_{n/p}$ of our words will be $p\th$ proper powers. These must be subtracted, for each prime divisor $p$ of $n$. 

If $d=p_1p_2$ is the product of two different primes, then the $d\th$ powers have appeared in the above process twice, so while considering such powers we have to correct this error, etc. All this leads to a formula for the number $d_n$ of words of length $n$ without cyclic subwords $x^2$, where the M\"obius function plays the role of the ``corrector'': $\displaystyle d_n =\sum_{d|n} \mu(d)c_{n/d}$. Although we considered only those divisors $d$ which split as the product of pairwise different prime divisors, we have included in the preceding formula all divisors $d\,|\,n$ simply because $\mu(d)=0$ as soon as $d$ is divisible by a square of a prime number.

The set of all words we just counted splits into $n$-element subsets consisting of cyclic shifts of one of its elements. The greatest word in this subset, in the sense of the LEX-grading such that $x>y$,  is an LS-word (see Subsection \ref{ssRLFB}). As a result, the number of all LS-words of length $n$ in the alphabet $x,y$, without cyclic subwords $x^2$ is now given by the formula $ (1/n)\sum_{d|n} \mu(d)c_{n/d}$ (as earlier, $x$ is not counted here). Since no LS-word of length $\ge 2$ can have suffix $x$, this formula also gives the total number of LS-words of length $n$, without subword $x^2$.

\begin{Proposition}
Let $L=L(X)$ be a free Lie algebra of rank 2, with free basis $X$, $x$ a letter in $X$, $S$ a $2$-subideal of $L$ generated by $x$. Let $\fib(n)$ denote the $n\th$ Fibonacci number. Then the graded function $d_{L/S}$ of the relative cogrowth of $S$ in $L$ is given by
\begin{equation}\label{eFib}
d_{L/S}(n)=\frac{1}{n}\sum_{d|n} \mu(d)\left(\fib\left(\frac{n}{d}-1\right)+\fib\left(\frac{n}{d}+1\right)\right).
\end{equation}
\end{Proposition}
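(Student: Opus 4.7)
The plan is to read off a basis for the graded component $(L/S)_n$ from the Shirshov basis of $L$, and then count the resulting combinatorial objects. By Lemma \ref{ldegsub}, every LS-commutator $c=[w]$ whose associative support $w$ contains $x^2$ lies in $S=\mathrm{id}_L^2 x$, so such $c$ vanish modulo $S$. Lemma \ref{lPC} provides the converse: the LS-commutators, other than $x$ itself, whose supports avoid $x^2$ are linearly independent modulo $S$. Combining this with Lemma \ref{lnS}, the images of the LS-commutators $[w]$ for which $w$ is an LS-word of degree $n$ not containing $x^2$ as a subword form a basis of $(L/S)_n$. Consequently $d_{L/S}(n)$ equals the number of such LS-words of length $n$ over $\{x,y\}$.

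Next I would pass from LS-words to primitive necklaces. Fix the lex order with $x>y$, so that every LS-word is the strictly greatest cyclic rotation of a primitive word of the same length. Observe that no LS-word of length $\ge 2$ can end in $x$: shifting the trailing $x$ to the front yields a rotation beginning with the maximal letter and thus strictly lex-greater, contradicting maximality. Hence, among LS-words, forbidding $x^2$ as a linear subword is equivalent to forbidding $x^2$ in any cyclic position, and the LS-words of length $n$ avoiding $x^2$ are precisely the lex-greatest representatives of the primitive necklaces of length $n$ avoiding cyclic $x^2$.

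I would then count the total number $c_n$ of length-$n$ words without cyclic $x^2$, following the recurrences given in the paper: words starting with $y$ and having no cyclic $x^2$ are exactly concatenations of the blocks $y$ and $yx$, and splitting by the last letter yields $a_n=a_{n-2}+b_{n-2}$ and $b_n=a_{n-1}+b_{n-1}$, which identify $b_n=\fib(n)$ and $a_n=\fib(n-1)$. Adding the words with prefix $x$ (in cyclic bijection with those of suffix $x$, so also counted by $a_n$) yields $c_n=2a_n+b_n=\fib(n-1)+\fib(n+1)$. Finally, the standard Möbius inversion for primitive necklaces in a cyclically-closed family gives the number of LS-words of length $n$ avoiding $x^2$ as
\[
\frac{1}{n}\sum_{d\mid n}\mu(d)\,c_{n/d}=\frac{1}{n}\sum_{d\mid n}\mu(d)\left(\fib\!\left(\tfrac{n}{d}-1\right)+\fib\!\left(\tfrac{n}{d}+1\right)\right),
\]
which is (\ref{eFib}). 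The only conceptually delicate point is the "no suffix $x$" observation, which is what lets the linear and cyclic versions of the forbidden-subword condition coincide; once this is in hand, the rest is routine Fibonacci bookkeeping combined with the standard necklace-counting Möbius inversion.
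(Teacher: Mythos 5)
Your proposal is correct and follows essentially the same route as the paper: reduce via Lemmas \ref{ldegsub}, \ref{lPC} and \ref{lnS} to counting LS-words avoiding $x^2$, note that no LS-word of length $\ge 2$ ends in $x$ so the linear and cyclic conditions agree, derive the Fibonacci recurrences for blocks $y$ and $yx$, and finish with the M\"obius/necklace count $\frac{1}{n}\sum_{d\mid n}\mu(d)c_{n/d}$. The only (harmless) loose end is the degenerate case $n=1$, where $x$ itself must be excluded from the count, which you implicitly handle by invoking Lemma \ref{lPC} ``other than $x$'' just as the paper does.
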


We can see from (\ref{eFib}) that the growth of the functions $d_{L/S}$ and $g_{L/S}$ is exponential, with the base of exponent $\displaystyle\lim \sqrt[n]{g_{L/S}(n)}$ being the same number as for the Fibonacci sequence, that is, $\dfrac{1+\sqrt{5}}{2}$. Recall (see (\ref{en1}))  that for the whole of $L$ the base of exponent is $2$.

The first values of the functions $d_{L/S}$ and $g_{L/S}$ are given in the Table 1 below.

\nopagebreak

\begin{table}[ht]

\centering

\begin{tabular}{|c| c| c| c| c| c| c| c| c| c| c|}

\hline
\hline 
$n$ & 1 & 2 & 3& 4 & 5&6&7&8&9&10 
\\
\hline
\hline
$d_{L/S}(n)$&1 & 1 & 1 & 1 & 2&2&4&5&8&11\\
$g_{L/S}(n)$&1 & 2 & 3 & 4 & 6&8&12&17&25&36\\
\hline
\hline
$n$ &11&12&13&14&15&16&17&18&19&20\\
\hline
\hline
$d_{L/S}(n)$&18&25&40&58&90&135&210&316&492&750\\
$g_{L/S}(n)$&54&79&119&177&267&402&612&928&1420&2170\\
\hline
\hline
\end{tabular}
\caption{Cogrowth of the 2-subideal generated by $x$ in $L(x,y)$}
\label{table:Example}
\end{table}

\section{Computing the exponential base of the relative growth}\label{sCBE}

The approach suggested in Section \ref{sCGW} allows us to prove our claims about the exponential base of  of the growth of (not necessarily finitely generated) subalgebras in free Lie algebras, as stated in Theorem \ref{t1aI}. To be able to apply the results of Section \ref{sCGW}, we need an auxiliary lemma. 

\begin{Lemma}\label{lAtL} Let $A$ and $L$ be a free associative algebra and a free Lie algebra, with the same graded free basis $Y$. Suppose  $\displaystyle\lim_{n\to\infty}\sqrt[n]{g_A(n)}$ exists and $>1$. Then $\displaystyle\beta_L=\lim_{n\to\infty} \sqrt[n]{g_L(n)}$ exists and is equal to $\displaystyle\lim_{n\to\infty}\sqrt[n]{g_A(n)}$.
\end{Lemma}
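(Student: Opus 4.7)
The plan is to exploit the Poincar\'e--Birkhoff--Witt theorem, which, via the graded vector-space isomorphism of $A$ with the symmetric algebra on $L$, yields the Hilbert series identity
\[
H_A(t):=\sum_{n\ge 0}d_A(n)\,t^n \;=\; \prod_{k\ge 1}(1-t^k)^{-d_L(k)},\qquad d_L(k)=\dim L_k.
\]
Setting $H_L(t)=\sum_n d_L(n)\,t^n$ and $z_0:=\lim\sqrt[n]{g_A(n)}>1$, so that the radius of convergence of $H_A$ is $r_A=1/z_0<1$, I first compare $r_A$ with the radius $r_L$ of $H_L$. From $L_n\subseteq A_n$ one has $d_L\le d_A$, hence $r_L\ge r_A$. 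For the reverse inequality, the elementary bound $-\log(1-t^k)\le t^k/(1-t)$ on $[0,1)$, applied term by term to the logarithm of the PBW identity, gives
\[
\log H_A(t)\;\le\;\frac{H_L(t)}{1-t}\qquad (0\le t<1).
\]
Since $r_A<1$, finiteness of $H_L(t)$ at any such $t$ forces finiteness of $H_A(t)$, so $r_L\le r_A$. Therefore $r_L=r_A=1/z_0$, and a sandwich using $g_L\le g_A$ produces $\limsup\sqrt[n]{g_L(n)}=z_0$.

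The remaining task is $\liminf\sqrt[n]{g_L(n)}\ge z_0$. Here I would also invoke the alternative identity $H_A(t)=1/(1-H_V(t))$, where $H_V(t)=\sum_{i\ge 1}k_it^i$ with $k_i=\#Y_i$. Logarithmic differentiation gives $tH_A'(t)/H_A(t)=tH_V'(t)/(1-H_V(t))$, whose coefficient of $t^N$ is exactly $\psi(N):=\sum_{k\mid N}k\,d_L(k)$. A Perron--Frobenius argument for the non-negative series $H_V$ (analyzing equality in $|H_V(\rho e^{i\theta})|\le H_V(\rho)=1$) shows that on the circle $|t|=1/z_0$ the solutions of $H_V(t)=1$ are exactly $t=\omega/z_0$ with $\omega^\delta=1$, where $\delta:=\gcd\{i:k_i>0\}$, and each of these is a simple pole of $1/(1-H_V)$. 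Standard singular-coefficient asymptotics then deliver $\psi(N)\sim \delta\,z_0^N$ for $\delta\mid N$ and $\psi(N)=0$ otherwise.

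Finally, M\"obius inversion $n\,d_L(n)=\sum_{d\mid n}\mu(d)\,\psi(n/d)$ is dominated by its $d=1$ term, so $d_L(n)\sim \delta\,z_0^n/n$ for large $n\in\delta\Z$. Monotonicity of $g_L$ and the fact that every $n$ is within $\delta$ of an element of $\delta\Z$ then yield $g_L(n)\ge c_\varepsilon(z_0-\varepsilon)^n$ for every $\varepsilon>0$ and all large $n$, hence $\liminf\sqrt[n]{g_L(n)}\ge z_0$. Combined with the first paragraph this gives $\beta_L=\lim\sqrt[n]{g_L(n)}=z_0=\lim\sqrt[n]{g_A(n)}$. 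The decisive step is the asymptotic $\psi(N)\sim \delta z_0^N$: it rests on the Perron--Frobenius location of the dominant singularities on the critical circle plus a residue expansion at each simple pole, after which everything else --- PBW, radius comparison, and M\"obius inversion --- is routine.
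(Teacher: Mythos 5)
Your first half (the $\limsup$ comparison) is fine and genuinely different from the paper: PBW gives $\log H_A(t)\le H_L(t)/(1-t)$, and Pringsheim then forces the two radii of convergence to coincide, so $\limsup_n\sqrt[n]{g_L(n)}=z_0$. The second half, however, has a real gap. Your asymptotic $\psi(N)\sim\delta z_0^N$ rests on the assumption that the dominant singularities of $H_A(t)=1/(1-H_V(t))$ on the circle $|t|=1/z_0$ are simple poles located at the roots of $H_V(t)=1$, and that one can push a residue/transfer analysis slightly beyond that circle. This is automatic when $Y$ is finite ($H_V$ is a polynomial), but the lemma is stated for an arbitrary finitely graded basis $Y$, and the paper applies it precisely to infinite $Y$ (Theorem \ref{t1aI}, parts (i) and (iii)). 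For infinite $Y$ the radius of convergence of $H_V$ can be exactly $1/z_0$ with $H_V(1/z_0)<1$ (the ``subcritical'' situation, e.g.\ $k_i\approx c\,z_0^i/i^3$ with $c$ small): then $H_V(t)=1$ has no solution on the critical circle, the singularity of $H_A$ at $1/z_0$ is inherited from $H_V$ and is not a pole, there is no analytic continuation past $|t|=1/z_0$, and in fact your intermediate claims fail outright --- one gets $d_A(n)\asymp z_0^n n^{-3}$, hence $d_L(n)\asymp z_0^n n^{-4}$ and $\psi(N)=o(z_0^N)$, so neither $\psi(N)\sim\delta z_0^N$ nor $d_L(n)\sim\delta z_0^n/n$ holds. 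The hypothesis that $\lim_n\sqrt[n]{g_A(n)}$ exists and exceeds $1$ does not exclude such bases, so the Perron--Frobenius/singularity step is not merely unproved but false in the generality required; the conclusion of the lemma survives, but not by this route.

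The paper's argument avoids analysis entirely and is worth comparing. Since $d_A(n/d)^d\le d_A(n)$, the words of degree $n$ that are proper powers number at most $n\sqrt{g_A(n)}$, so among all words of degree $\le n$ the proper powers contribute at most $n^2 g_A(n)^{2/3}=o(g_A(n))$ because $g_A$ is exponential. Every aperiodic word has exactly one Lyndon--Shirshov word among its cyclic shifts, and LS-words of degree $\le n$ are counted by $g_L(n)$; hence $g_A(n)(1-o(1))/n\le g_L(n)\le g_A(n)$, and taking $n$-th roots gives the limit at once. This elementary count is insensitive to the nature of the dominant singularity and uses only the exponentiality of $g_A$, which is exactly why it covers the infinite graded bases on which the lemma is later used. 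If you want to keep a generating-function proof of the $\liminf$, you would need either to restrict to finite $Y$ or to add hypotheses (such as the paper's Condition $\mathrm{W}$-type control of $F$ near the critical point) guaranteeing pole-type behavior; as written, the decisive step does not follow from the stated assumptions.
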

\begin{proof}  
If $d|n$ then the product of any $d$ words of degree $n/d$ is a word of degree $n$, which implies $d_A(n/d)^d\le d_A(n)$. If additionally, $d\ge 2$ then $d_A(n/d)\le \sqrt{d_A(n)}\le \sqrt{g_A(n)}$. But $d_A(n/d)$ is the number of words of degree $n$ which are $d$-powers. After summation over all divisors $d\ge 2$, we will find that the number of words of degree $n$ does not exceed $n\sqrt{g_A(n)}$. 

Now the existence of $\displaystyle\lim_{n\to\infty}\sqrt[n]{g_A(n)}>1$ means that $\sqrt{g_A(m)}<g_A(n)^{2/3}$, for all sufficiently great $n$ and all $m\le n$. Therefore, after summation over $m\le n$, we will have for large enough $n$ that the number of proper powers among the words of degree at most $n$ is bounded from above by $n^2 g_A(n)^{2/3}$, which is $o(g_A(n))$, because $g_A(n)$ is exponential. It follows that the number of words of degree $\le n$, which are not proper powers is $g_A(n)(1-o(1))$. 

Since among the cyclic shifts of any such word of degree $m\le n$ precisely one is an LS-word, the number of these latter takes the form of $g_A(n)(1-o(1))\gamma(n)$, where $\gamma(n)\in [1/n, 1]$.  By Lemma \ref{lnS}, the number of LS-words of length at most $n$ is $g_L(n)$. Taking the $n\th$ roots of these numbers, we arrive at the following:
 \begin{displaymath}
 \displaystyle\lim_{n\to\infty}\sqrt[n]{g_L(n)} =\displaystyle\lim_{n\to\infty}\sqrt[n]{g_A(n)},
\end{displaymath}
as claimed.
\end{proof}

Now we can proceed to the proof of Theorem \ref{t1aI}.

\begin{proof}
Recall that we are dealing with the relative growth a subalgebra $H$  of a free Lie algebra  $L$ of rank $m$ with a nongraded free basis $X$. Using Lemmas \ref{lGR} and \ref{pnC2} allows us to always assume that $H$ is a homogeneous subalgebra generated by an irreducible free basis $Y$, in which the number of elements of degree $i$ equals $k_i$, for $i=1,2\ld$. The relative growth of $H$ in $L$ is the same as the absolute growth of a free Lie algebra $L(Y)$, where $Y$ is viewed as an abstract graded set. The degree of each $y\in Y$ equals the degree of $y$ with respect to $X$ in $L$. Let us embed $L(Y)$ in a free associative algebra $A(Y)$. Lemma \ref{lAtL} allows us to translate our claims about $H$ to the claims about $A(Y)$.

We start with Claim (i), where $H$ is not necessarily finitely generated but is nonabelian.  Let us use Lemma \ref{ldXn} (notice that in that Lemma the graded set of generators is denoted by $X$!) Since $\dim L_i\le m^i$, it follows that $k_i\le m^i$, for each $i=1,2,\ld$. It follows that for any  $z>m$ the series $F(0)$, as a function of $z$, converges, hence $z_0$ in formula (\ref{edXn}) exists and provides us with the exponential base for the growth of  $A(Y)$, hence for the growth of $L(Y)$ (Lemma \ref{lAtL}), hence for the relative growth of $H$ in $L$. As noted in Lemma \ref{ldXn}, if $\# Y>1$, the growth is exponential. Thus we have proved Claim (i).

In the proof of Claim (ii),  the function $F(\zeta)=F_z^Y(\zeta)$ takes the form of 
\begin{equation}\label{pbaseexp}
F(\zeta)=\frac{k_1}{z-\zeta}+\frac{k_2}{(z-\zeta)^2}+\cdots+\frac{k_d}{(z-\zeta)^d}.
\end{equation}
where $d$ is the maximal degree of the elements in $Y$, and each $k_i$ is the number of elements of degree $i$, $i=1,2,\ldots,d$. As in the proof of Claim (i), we use  Lemma \ref{ldXn}. To be able to apply formula (\ref{edXn}), we need to determine zeros of $F(0)-1$, as a function of $z$. One can write $F(0)$, as a function of the real argument $z>0$ in the form $\dfrac{f(z)}{z^d}$, where $f(z)$ is a polynomial of degree less than $d$, with nonnegative coefficients and positive free term $k_d$. Using the Descartes Rule of Signs, the equation $z^d-f(z)=0$ has a unique positive root $z_0$.  Since $z_0$ is a root of a monic polynomial with integral coefficients, our claim about the algebraic integrality of the exponential base of $H$ follows. The exponential base is less than $m$, which follows from the exponential negligibility of  Theorem \ref{t1I}.

We prove (iii) for the case $m=2$, the general case being quite analogous. Choose $m_0\in (1,2)$. Let $M$ be the subalgebra of codimension 1, with free basis $u_1=x$, $u_2=[x,y],\ld$, given in Lemma \ref{pnC1}. For this subalgebra, $\displaystyle\sum_{i=1}^\infty \frac{1}{2^i}=1$. Suppose we could discard few summands in the series  so that 
\begin{equation}\label{eM0}
\sum_{i=1}^\infty\frac{k_i}{m_0^i} =1,
\end{equation} 
where $k_i=0$ if $u_i$ is discarded and $k_i=1$ otherwise. Let $B$ be the subalgebra of $M$ generated by those $u_i$ for which $k_i=1$. Then, according to our previous argument, $m_0$ will be the exponential base for the  $B$.

To find the sequence $k_1,k_2,\ld$ so that (\ref{eM0}) holds, let us assume by induction, that we have chosen $k_1,\ld,k_j$ so that $\displaystyle 1-\sum _{i=1}^j\frac{k_i}{m_0^i}\in \left[0,\frac{1}{m_0^j}\right)$. Let us set  $\displaystyle a_j=1-\sum _{i=1}^j \frac{k_i}{m_0^i}$. If $\displaystyle a_j< \frac{1}{m_0^{j+1}}$, then we set $k_{j+1}=0$, otherwise, $k_{j+1}=1$. Since $m_0<2$, in either case,  $a_{j+1}$ is in the desired interval. Since $m_0>1$, it follows that $a_j\to 0$. Hence $\displaystyle\sum_{i=1}^\infty \frac{k_i}{m_0^i}=1$, proving Claim (iii).

Claim (iv) is an easy consequence of the construction in the proof of Claim (iii). Indeed, the subalgebra $H$ in that proof is generated by a subset of the set of  the canonical free basis of $M$. Let $S$ be the ideal of $M$ generated by the complement of that set. We have a vector space decomposition $L=H\oplus S\oplus \langle y\rangle$, where all subspaces are homogeneous. In this case, for any $n\ge 1$, $L^{(n)}=(L^{(n)}\cap H)\oplus (L^{(n)}\cap S)\oplus \langle y\rangle$. Hence,
\begin{displaymath}
g_{L/S}(n)=\dim L^{(n)}/ (L^{(n)}\cap S) = \dim (L^{(n)}\cap H)+1=g_H(n)+1.
\end{displaymath}
It follows that the exponential base of $g_{L/S}$ is he same as that for $g_H$. Finally, $M/S\cong H$, a free Lie algebra.
\end{proof}

We conclude this section by exhibiting a positive algebraic integer which is not the exponential base of any finitely generated subalgebra. For instance, if $H$ is generated in $L(x,y)$ by $x,[x,y]$ then by Claim (ii) in Theorem \ref{t1aI}, we have to solve $1+z=z^2$. As a result, we obtain $z_0=\dfrac{1+\sqrt5}{2}$. As a side remark, not every positive algebraic integer $<m$ is the exponential base for the growth of a finitely generated subalgebra in a free Lie algebra of rank $m$. For instance, $\lambda =(5-\sqrt 5)/2$, which is a positive root of $z^2-5z+5$, cannot serve as such a base. Indeed, if $\lambda$ is the solution to the equation with integral coefficients $f(z)=z^d$ then another root $\mu =(5+\sqrt 5)/2>0$ of $z^2-5z+5$ is also a solution. At the same time, by the argument in the proof of Claim (ii) the positive solution $z_0$ of $f(z)=z^d$ must be unique!

\section{Free complements}\label{sFC}

The main contents of this section is the proof of Theorem \ref{t4I}.

\proof
We first consider the case where $H$ is a homogeneous subalgebra in $L$ with a homogeneous set $B_0$ of free generators. Since $B_0$ is finite, there is $t$ equal to the maximum of degrees of all elements in $B_0$. For the homogeneous component $L_t$ of $L$ we have $L_t=(H\cap L_t)\oplus M_t$, for some (homogeneous) complementary subspace $M_t$. Let us denote by $B_1$ a linear basis of  $ M_t$ and consider the subalgebra  $H_1$ generated by $H$ and $B_1$. We want to prove  that $B(1)=B_0\sqcup B_1$ is a free basis for $H_1$. 

For this, it is necessary and sufficient (see \cite[Theorem 4.2.11]{B}) to show that $B(1)$ generates $H_1$ (this is clear) and that the elements of $B(1)$ are linearly independent modulo the commutator subalgebra $[H_1,H_1]$. Let us denote by $L^m$ the $m\th$ term of the lower central series for $L$. We have $L^m=L_m\oplus L_{m+1}\oplus\cdots$. Since $[M_t,H]\subset L^{t+1}$, we have that  $[H_1,H_1]\subset [H,H]\oplus L^{t+1}$. So a nontrivial linear dependence of the elements of  $B(1)$ modulo $[H_1,H_1]$ would imply a nontrivial linear dependence of the elements of $B(1)$ modulo $[H,H]+ L^{t+1}$. The subspaces $[H,H]$ and $L^{t+1}$ are homogeneous. It follows then that we can consider the linear dependence only for  homogeneous elements of $B(1)$ of the same degree $d$. 

If $d<t$ then neither $L^{t+1}$ nor $B_1$ have homogeneous  elements of degree $d$. Thus we have a nontrivial linear dependence of some elements of $B_0$ modulo $[H,H]$. This is not possible because $B_0$ is a free basis of $H$. The case $d>t$ is vacuous because neither $B_0$ nor $B_1$ have elements of such degree. Now if $d=t$ then neither $B_0$ nor $L^{t+1}$ have elements of degree $t$. As a result, we have a nontrivial linear dependence between the elements of $B_1$ modulo $[H,H]\cap L_t\subset H\cap L_t$. This is not possible by the choice of $B_1$ as a basis of a direct complement to $H\cap L_t$ in $L_t$. Thus we have proved that $B(1)$ is indeed a free basis for $H_1$.

Since $H_1$ is homogeneous and the degrees of elements in $B(1)$ are less than $t+1$, we can repeat the previous construction by replacing $t$ with $t+1$. So we write  $L_{t+1}=(H\cap L_{t+1})\oplus M_{t+1}$, choose a linear basis $B_2$ in $M_{t+1}$, set $ B(2)=B_0\sqcup B_1\sqcup B_2$, introduce the subalgebra $H_2$ generated by $B(2)$, prove that $B(2)$ is a free basis of $H_2$, etc. The union $C=\bigcup_{i=1}^\infty  H_i$ is a free Lie subalgebra with basis $B=\bigsqcup_{i=0}^\infty B_i$. Clearly, $H$ is a free factor in $C$.

By construction, any homogeneous polynomial of degree $\ge t$ is an element of $C$. It follows that the codimension of $C$ in $L$ is at most the dimension of the $t\th$ term $L^{(t)}$ of the degree filtration of $L$. Since $L$ is finitely generated, we  have $\dim L^{(t)}\le\infty$, proving that $C$ is of finite codimension in $L$.

Now  suppose that $H$ is not necessarily homogeneous. Recalling Section \ref{sCGFGS}, we choose a finite irreducible set $S$ of free generators for $H$ and consider the set $S'$ of leading parts of the elements of this set. Let $H'$ be the subalgebra generated by $S'$. By Lemma \ref{pnC0}, $S '$ is independent. Let us set $B_0=S'$ and proceed as just above to produce a homogeneous subalgebra $C'$ of finite codimension in $L$ with the set $B'$ of free generators such that $B'=B_0\bigsqcup \overline{B_0}$, where $\overline{B_0}=\bigsqcup_{i=1}^\infty B_i$. Now let us consider $B=S\bigsqcup\overline{B_0}$. Then the set of the leading parts of elements in $B$ is $B'$. Since $B'$ is independent, by Lemma \ref{pnC0}, the same is true for $B$.

Hence $B$ is the free basis of a subalgebra $C$. Invoking Lemma \ref{pnC2}, we have that $C'=\gr C$. If  $K$ is a subalgebra generated by $\overline{B_0}$, we have $C=H\ast K$. Notice that by construction the subalgebra $K$ is homogeneous.

It remains to note that $\dim L/C<\infty$, which follows by Part (b) of Lemma \ref{lGR}. Now the proof is complete.
\endproof

It is worth noting that in a particular case where the subalgebra $H$ of a free Lie algebra $L$ is generated by its homogeneous component of degree $t$, it is true that $H$ is a free factor in the $t^\mathrm {th} $ term $L^t$ of the lower central series of $L$. Probably, this result was already known to A.I.Shirshov.

\section{Triviality of ideals in subideals}\label{sTIS}

We start this section with a fairly general result about the derivations of free algebras, which we think could be of interest in its own.

\begin{Lemma}\label{lInvDer} Let $A$ \emph{(}respectively, $L$\emph{)} be the free associative \emph{(}respectively, Lie\emph{)} algebra with free generators $x_1,x_2,\ldots$ over an arbitrary field and $D$ its derivation given on the generators by the rule $Dx_i=x_{i+1}$ \emph{(}$i=1,2,\dots$\emph{)}. Suppose $a$ is a
nonzero element  of $A$ \emph{(}of $L$\emph{)}. Then for every $k\ge 1$, there exists
$n\ge 0$ such that the element $D^n(a)$ does not belong to the ideal $I_k$
\emph{(}to the Lie ideal $J_k$\emph{)} of $A$ \emph{(}resp., of $L$\emph{)} generated by the set $\{x_1,\dots,x_k\}$.
\end{Lemma}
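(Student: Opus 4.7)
My plan is to reduce the lemma to a polynomial identity in a polynomial ring.

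First, since $J_k\subseteq L\cap I_k$---any Lie-bracket expression in $x_1,\ldots,x_k$ expands in $A$ as a sum of monomials each containing some $x_j$ with $j\le k$---it suffices to prove the lemma for the associative algebra $A$. Moreover, $I_k$ is homogeneous with respect to the total-degree grading of $A$, and $D$ preserves total degree; so we may assume $a$ is homogeneous of some degree $d$.

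The key step is polynomialization: identify the homogeneous component $A_d$ with a subspace of $\F[t_1,\ldots,t_d]$ via the linear injection $\psi\colon x_{i_1}\cdots x_{i_d}\mapsto t_1^{i_1}\cdots t_d^{i_d}$. A direct Leibniz-rule computation gives $\psi\circ D=(t_1+\cdots+t_d)\cdot\psi$, so $\psi(D^n a)=(t_1+\cdots+t_d)^n\psi(a)$. Furthermore, $b\in I_k$ if and only if every monomial of $\psi(b)$ has some exponent $\le k$. The lemma thus reduces to the following claim: for every nonzero $P=\psi(a)\in\F[t_1,\ldots,t_d]$, some power $(t_1+\cdots+t_d)^n P$ contains a monomial $t_1^{A_1}\cdots t_d^{A_d}$ with every $A_j\ge k+1$.

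Suppose this fails. Writing $P=\sum_{\vec i}c_{\vec i}t_1^{i_1}\cdots t_d^{i_d}$, the coefficient of $t_1^{A_1}\cdots t_d^{A_d}$ in $(t_1+\cdots+t_d)^n P$ equals $\sum_{\vec i}c_{\vec i}\binom{n}{A_1-i_1,\ldots,A_d-i_d}$. Substituting $A_j=B_j+k+1$ (with each $B_j$ large enough that $A_j-i_j\ge 0$ for every $\vec i$ in $\mathrm{supp}(P)$) and using the identity $\binom{n}{A_1-i_1,\ldots,A_d-i_d}=\frac{n!}{\prod_j(B_j+k+1)!}\prod_j[B_j+k+1]_{i_j}$, where $[x]_m=x(x-1)\cdots(x-m+1)$ is the falling factorial, the vanishing assumption rewrites, after clearing the nonzero scalar $n!/\prod_j(B_j+k+1)!$, as
\[
\sum_{\vec i\in\mathrm{supp}(P)}c_{\vec i}\prod_{j=1}^{d}[B_j+k+1]_{i_j}=0
\]
for all integer tuples $\vec B$ with each $B_j$ sufficiently large. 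The polynomials $\prod_j[B_j+k+1]_{i_j}$ have pairwise distinct leading monomials $\prod_j B_j^{i_j}$ and are therefore linearly independent in $\F[B_1,\ldots,B_d]$; in characteristic zero, pointwise vanishing on a Zariski-dense set of integer tuples forces the polynomial identity, and hence every $c_{\vec i}=0$---contradicting $P\ne 0$.

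The main obstacle I anticipate is handling arbitrary characteristic in the final linear-independence step: in characteristic $p$, pointwise vanishing on $\Z_{\ge L}^d$ does not by itself imply a polynomial identity in $\F[B_1,\ldots,B_d]$, and the scalar $n!/\prod_j(B_j+k+1)!$ may itself vanish in $\F$ once $n\ge p$. One natural remedy is to restrict the specialization $\vec B$ so that all relevant multinomial coefficients remain nonzero in $\F$, or to argue directly via the $\Z$-basis of integer-valued polynomials formed by the falling factorials and reduce modulo the characteristic at the end. I expect this characteristic-sensitive verification to be the chief technical hurdle.
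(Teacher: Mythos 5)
Your reductions are sound and your polynomialization is a genuinely different (and elegant) route from the paper's: the paper never leaves the free associative algebra, instead running a delicate induction that tracks a single word in the support of $D^n(a)$ and shows its coefficient cannot be cancelled. Your passage to $\F[t_1,\dots,t_d]$ via $\psi(x_{i_1}\cdots x_{i_d})=t_1^{i_1}\cdots t_d^{i_d}$, with $\psi\circ D=(t_1+\cdots+t_d)\cdot\psi$ and the correct translation of membership in $I_k$, is verified easily; the Lie-to-associative reduction ($J_k\subseteq I_k$, $D$ extending to $A$) is the same as the paper's. One small bookkeeping point: besides word-length homogeneity you should also reduce to $a$ homogeneous for the weight grading ($\deg x_i=i$), which is legitimate because $I_k$ is spanned by words and $D$ raises weight by exactly $1$; otherwise the multinomial sum you write must carry the side constraint $\sum_j(A_j-i_j)=n$ and the falling-factorial rewriting is not uniform over the support.

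The genuine gap is the one you flag yourself: the lemma is stated, and is needed in the paper, over an arbitrary field, and your final step proves the polynomial claim only in characteristic zero. In characteristic $p$ both halves of that step break. Extracting the scalar $n!/\prod_j A_j!$ is not legitimate: only the integer multinomial coefficient has a meaning in the ground field, and it vanishes mod $p$ in the intricate patterns described by Lucas' theorem, so you cannot clear it to obtain the falling-factorial identity. And the concluding ``vanishing at all large integer tuples forces the zero polynomial'' argument fails outright over positive characteristic, since integer points only sample the finite prime field (already $B^p-B$ vanishes at every integer over $\F_p$). Repairing this is not a routine verification but the actual remaining content: for instance, one would have to choose the exponent vector $\vec A$ digit-by-digit in base $p$ (via Lucas/Kummer) so that exactly one tuple of the support, say the lexicographically largest, contributes a multinomial coefficient that is nonzero mod $p$ -- which is, in commutative disguise, the kind of ``isolate one term with coefficient $1$'' argument the paper carries out combinatorially and characteristic-freely. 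As it stands, your proposal establishes the lemma only in characteristic zero.
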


\proof Let us begin with the associative case. We may assume that $a$ is a homogeneous
element of degree $c\ge 1$ and $a$ is an element of a subalgebra $A_l$ generated by
$x_1,\dots,x_l$ for some $l\ge k$. We can write
\begin{displaymath}
a=\sum_{1\le i_1\le\ell,\ldots,1\le i_c\le\ell}\alpha_{i_1,\dots,i_c}x_{i_1}\cdots x_{i_c}
\end{displaymath} 
for some scalars $\alpha_{i_1,\dots,i_c}.$ 
We say that a $c$-tuple $(i_1,\dots,i_c)$ is an element of the \textit{support} $\supp{a}$ if $\alpha_{i_1,\dots,i_c}\ne 0.$

Let us define a sequence of numbers $K_1,K_2,\ldots, K_c$ by setting $ K_d = (1/2)(2k+2)^{2^{c-d}}$, for any of $d=1,2,\ldots,c$. Then $K_c=k+1$ and $K_d=2K_{d+1}^2$, for any $1\le d<c$. Our lemma is an easy consequence of the following.

\begin{Claim}
For any $d$, $1\le d\le c$, there is $n\ge 1$ such that  the support of $D^n(a)$ has a $c$-tuple $(i_1,\dots,i_c)$ with $\min\{ i_1,\dots,i_d\}\ge K_d$.
\end{Claim}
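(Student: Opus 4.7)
The plan is to prove the Claim by forward induction on $d$ from $1$ to $c$. Since $D$ increments total weight by one and so preserves the weight grading of the space of degree-$c$ elements of $A$, one may assume (after replacing $a$ by a single nonzero weight-homogeneous component of it) that $a$ is both degree- and weight-homogeneous; this makes the weight bookkeeping clean.

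For the base case $d=1$, I would choose $\mathbf{i}^{*}=(i_{1}^{*},\ldots,i_{c}^{*})$ to be lexicographically maximal in $\supp a$ (first maximize $i_{1}$, then $i_{2}$, and so on). Then for every $n\ge 0$ the tuple $(i_{1}^{*}+n,i_{2}^{*},\ldots,i_{c}^{*})$ belongs to $\supp D^{n}(a)$ with coefficient exactly $\alpha_{\mathbf{i}^{*}}\neq 0$: weight conservation forces any contributing input tuple to equal $\mathbf{i}^{*}$, and the only compatible shift pattern places all $n$ increments on the first letter. Taking $n$ large enough then yields a tuple with first coordinate at least $K_{1}$.

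For the inductive step, suppose the Claim holds at stage $d-1$ and set $b:=D^{n_{0}}(a)$, where $n_{0}$ is obtained from the inductive hypothesis so that $\supp b$ contains some $\mathbf{j}^{(0)}$ whose first $d-1$ coordinates are at least $K_{d-1}\ge K_{d}$. Among all $\mathbf{j}\in\supp b$ with $\min\{j_{1},\ldots,j_{d-1}\}\ge K_{d}$, let $\mathbf{j}^{*}=(j_{1}^{*},\ldots,j_{c}^{*})$ be one that maximizes the $d$-th coordinate $j_{d}^{*}$. I would then study, as a function of $m\ge 0$, the coefficient in $D^{n_{0}+m}(a)=D^{m}(b)$ of the explicit target tuple $\mathbf{o}_{m}:=(j_{1}^{*},\ldots,j_{d-1}^{*},j_{d}^{*}+m,j_{d+1}^{*},\ldots,j_{c}^{*})$.

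The hard part is that in general several inputs $\mathbf{j}\in\supp b$ may contribute to the coefficient of $\mathbf{o}_{m}$ and threaten to cancel. To manage this I would exploit weight conservation: any contributing $\mathbf{j}$ must satisfy $|\mathbf{j}|=|\mathbf{j}^{*}|$ together with $j_{\ell}\le j_{\ell}^{*}$ for all $\ell\neq d$, which forces $s_{\mathbf{j}}:=j_{d}-j_{d}^{*}\ge 0$. The multinomial coefficient attached to such a $\mathbf{j}$ is then a falling-factorial polynomial in $m$ of degree $s_{\mathbf{j}}$, so the full coefficient of $\mathbf{o}_{m}$ becomes a polynomial $P(m)$ in $m$. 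At $m=0$ only the unique $s_{\mathbf{j}}=0$ term survives, and by weight balance that term must be $\mathbf{j}^{*}$ itself, yielding constant term $\beta_{\mathbf{j}^{*}}\neq 0$. Hence $P$ is a nonzero polynomial and is nonvanishing for all but finitely many integers $m$; picking any such $m$ with $m\ge K_{d}-j_{d}^{*}$ (and $n_{0}+m\ge 1$) produces a tuple in $\supp D^{n_{0}+m}(a)$ whose first $d$ coordinates are all at least $K_{d}$, completing the induction.
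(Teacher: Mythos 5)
Your argument is correct in substance and follows a genuinely different, and in fact considerably shorter, route than the paper's. The paper fixes the number of derivation steps in advance (it applies $D^{K_{d+1}}$ to $b$) and invests all the work in choosing, via the partition of $\supp b$ into the sets $S_j$, the auxiliary function $f$ and a maximal decreasing sequence of indices, a monomial $u$ whose shifted tuple $\mathbf{i}$ cannot be produced by \emph{any} other monomial of $b$ (Cases 1--3); cancellation is thus excluded outright, the argument is purely combinatorial on supports, it works verbatim over any field, and it is exactly there that the specific constants $K_d=2K_{d+1}^2$ are used. You instead fix the target tuple and vary the number of steps $m$, after first reducing to a weight-homogeneous $a$ (legitimate: $D$ raises the weight by one, so distinct weight components of $a$ never interfere inside $D^n(a)$). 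Weight balance then shows that every $\mathbf{j}\in\supp b$ reaching $\mathbf{o}_m$ has $s_{\mathbf{j}}\ge 0$ and contributes $\beta_{\mathbf{j}}\binom{m}{s_{\mathbf{j}}}$ times a fixed positive integer, and that $s_{\mathbf{j}}=0$ forces $\mathbf{j}=\mathbf{j}^*$, so the coefficient of $\mathbf{o}_m$ has value $\beta_{\mathbf{j}^*}\ne 0$ at $m=0$. Note that you never actually use the maximality of $j_d^*$, nor the precise values of the $K_d$: any $\mathbf{j}^*\in\supp b$ whose first $d-1$ coordinates are $\ge K_d$ would do, which shows the doubly exponential constants matter only for the paper's explicit estimate of $n$, not for the existence statement.

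The one step that does not survive as written is the last one: ``$P$ is a nonzero polynomial, hence nonvanishing for all but finitely many integers $m$.'' Lemma \ref{lInvDer} is stated over an arbitrary field, and in characteristic $p>0$ the binomial coefficients must be read modulo $p$, so $m\mapsto P(m)$ is not a polynomial function with finitely many zeros; already $\binom{m}{1}$ vanishes for all $m\equiv 0\pmod p$, and such a $P$ can vanish on infinitely many integers. The conclusion you need is nevertheless true and has a one-line repair: choose $N$ with $p^N>\max_{\mathbf{j}}s_{\mathbf{j}}$ and take $m$ a large multiple of $p^N$; by Lucas' theorem every $\binom{m}{s}$ with $1\le s<p^N$ is then divisible by $p$, so $P(m)=\beta_{\mathbf{j}^*}\ne 0$, and such $m$ can be chosen $\ge K_d-j_d^*$ with $n_0+m\ge 1$. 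With this patch your induction is complete and valid over every field, matching the generality of the paper's proof.
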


The estimate for the value of $n$ can be recovered from the proof that follows. Once proven, our Lemma follows because  $K_c>k$.

For the proof, let us use induction by $d$. 

In the case $d=1$, our task is easy. Indeed, let $(i_1,...,i_c)\in\supp{a}$ with the greatest
possible $i_1$.
Then the only  $c$-tuple
$(i_1+1,i_2,\dots,i_c)$ comes to $\supp{D(a)}$ from 
\begin{displaymath}D(\alpha_{i_1,i_2,\dots,i_c}x_{i_1}x_{i_2}\cdots x_{i_c})=\alpha_{i_1,i_2,\dots,i_c}(x_{i_1+1}\cdots x_{i_c}
+x_{i_1}x_{i_2+1}\cdots x_{i_c}+\dots),\end{displaymath}
and $i_1+1$ is the greatest first index in the members of $\supp{D(a)}$. Then we repeat
the argument and have  $(i_1+K_1,i_2,\dots,i_c)\in \supp{D^n(a)}$ if $n=K_1.$

Now assume that the statement is true for some $d$ ($1\le d<c$). In this case, there is $n$ such that the support of
$b=D^n(a)$ includes a $c$-tuple $(i_1,\dots,i_c)$ with $\min\{ i_1,\dots,i_d\}\ge K_d.$
Let us split $\supp{b}$ as the disjoint union 
\begin{displaymath}\supp{b}=\left(\bigsqcup_{j\ge K_{d+1}} S_j\right)\bigsqcup S,
\end{displaymath}
 where $(i_1,\dots,i_c)\in S_j$ if $\min\{ i_1,\dots,i_d\}=j$, $j\ge K_{d+1}$. The $c$-tuples in $S$ satisfy $\min\{i_1,\dots,i_d\}<K_{d+1}$.
The maximal $j$ with  non-empty $S_{j}$ satisfies the inequality $j\ge K_d$ because of the way we have obtained $b$.

To proceed,  we introduce a function $f(j)$ ($j\ge K_{d+1}$) whose $j^{\mathrm{th}}$ value equals to the maximal value of the $(d+1)^{\mathrm{st}}$
coordinate of the tuples in the subset $S_j$; we set, by definition, $f(j)=1$ if the subset $S_j$ is empty.

If $f(j)\ge K_{d+1}$ for some $j\ge K_{d+1},$
then there is a $c$-tuple $(i_1,\dots,i_c)\in S_j$ such that $\min(i_1,\dots,i_{d+1})\ge K_{d+1},$ which is sufficient to complete
the induction step. Therefore, we further assume that $f(j)<K_{d+1}$ for every $j\ge K_{d+1}$, and so $f(j)$ takes less than  $K_{d+1}$ different values. 

It follows that the length of any decreasing sequence  $j(1)>j(2)>\dots$
such that the values $f(j(1))<f(j(2))<\dots$ increase, is less than $K_{d+1}$.
Recall that the maximal value of $j$ with non-empty $S_j$ is at least $K_d$.
Let us choose a particular decreasing sequence as follows. There exists $j(1)\ge K_d$ such that $f(j(1))\ge f(q)$ for every $q\ge j(1)$. If for all $j<j(1)$ we have $ f(j)\le f(j(1))$, then we are done. Otherwise we keep working and define $j(2)$ as the maximal
index such that $j(2)<j(1)$ while $f(j(2))>f(j(1))$. Proceeding in the same way, we eventually arrive at a finite sequence $j(1)>j(2)>\ldots>j(s)$ with $f(j(1))<f(j(2))<\ldots<f(j(s))$, which cannot be extended any further. Here we additionally have $s< K_{d+1}$ and $f(j(t))\ge f(q)$, for arbitrary $t\le s$ and $q\ge j(t)$.

Notice that it is not possible that every number in the sequence $j(1)-j(2),\dots, j(s-1)-j(s), j(s)$
is strictly less than $2K_{d+1}$. For, if this were the case, we would be able to write 
\begin{displaymath}
K_d\le j(1)=(j(1)-j(2))+\dots+(j(s-1)-j(s))+j(s)<2K_{d+1}K_{d+1}.
\end{displaymath} 
Therefore,  either $j(t)-j(t+1)>2K_{d+1}$ for some $t<s$ or $j(s)>2K_{d+1}$. In both cases, we can conclude that there is a number  $m$ such that the following are true:
 \begin{displaymath}
 m\ge 2K_{d+1},\mbox{ and } f(m)\ge f(q)\mbox{ for any }q\ge m-K_{d+1}.
 \end{displaymath}
Now we take $(i_1,\dots,i_d,f(m),\dots,i_c)\in S_m$, consider the respective monomial $u=x_{i_1}\cdots x_{i_d}x_{f(m)}\cdots x_{i_c}$,
entering $b$ with non-zero coefficient, and compute the elements $D^{K_{d+1}}(u)$ and $D^{K_{d+1}}(b)$.
The support of $D^{K_{d+1}}(u)$ includes the $c$-tuple
\begin{displaymath}
{\bf i}=(i_1,\dots,i_d,f(m)+K_{d+1},i_{d+2},\dots,i_c).
\end{displaymath} This follows
exactly as in the case $d=1$ above. The first $d+1$ indexes of this tuple are not less than $K_{d+1}$,
by the definition of $S_m$. Therefore to complete the induction, it suffices to prove that for any monomial $v$ occurring in $b$ with nonzero coefficient, the tuple $ {\bf i}$ does not appear in the support of $D^{K_{d+1}}(v)$.  Let $\supp{v}=\{(i'_1,\dots,i'_d,\dots,i'_c)\}$.

\begin{enumerate}
\item[{\bf Case 1}]:  $\supp{v}\in S_q$, where $q\ge m-K_{d+1}$.  By the choice of $m$, $f(m)\ge f(q)$.
Our argument, as just above, shows that $D^{K_{d+1}}(v)$ has at most one monomial with multi-index of the form 
\begin{displaymath}
{\bf i'}=(i'_1,\dots,i'_d,f(m)+K_{d+1},\dots,i'_c).
\end{displaymath} 
Since ${\bf i'\ne i}$ we have ${\bf i}\notin\supp{D^{K_{d+1}}(v)}$.
\item[{\bf Case 2}]: $\supp{v}\in S_q$,  where $q<m-K_{d+1}.$ According to the definitions of $S_q$ and $S_m$, we have $\min(i'_1,\dots,i'_d)=q<m-K_{d+1}$. Each  of the $K_{d+1}$ derivations can increase the minimum of the first
$d$ indices in the members of the support at most by $1$.  Hence ${\bf i}\notin \supp{D^{K_{d+1}}(v)}$ .
\item[{\bf Case 3}]:\textit{ $\supp{v}\in S$.} Then by the definitions of $D$ and $S$, one of the first $d$ indices of any term of $D^{K_{d+1}}(v)$
is less than $2K_{d+1}$ .  Hence, by the definition of $m$ and $S_m $, ${\bf i}\notin\supp{D^{K_{d+1}}(v)}$.
\end{enumerate}

Thus the proof is complete in the case of associative algebras. The case of Lie algebras reduces to the associative case because the free generators of $A$ also freely generate $L$ with respect to the bracket operation, $J_k\subset I_k$, and the derivation
$D$ of $L$ extends to a derivation of $A$ (see \cite[Chapter II]{NB}).

\endproof

\begin{Theorem}\label{t3} Let $J$ be a proper ideal of a free Lie algebra $L$ and $S$ a finite subset of $J$. Let $I=\mathrm{id}_J S$ be the ideal closure  of $S$ in $J$. Then for every $z \in L\backslash J$ and for every nonzero $a\in I$ there is  $n$ such that $[a,z,...,z]$ ($n$ times) is not an element of $I$. 
\end{Theorem}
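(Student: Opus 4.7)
The plan is to realize $J$ as a free Lie algebra on which $\ad_z$ acts as a shift-type derivation, then apply Lemma \ref{lInvDer}. I would first work in the subalgebra $L^{*}=Fz+J\subseteq L$, which is free by Shirshov--Witt and has $J$ as a codimension-one ideal not containing $z$. I would show that $L^{*}$ has a free basis of the form $\{z\}\sqcup B$ with $B\subseteq J$: the quotient $L^{*}/J\cong F$ factors through $(L^{*})^{\mathrm{ab}}$ with kernel $J/[L^{*},L^{*}]$, so lifting a vector-space basis of that kernel to $B\subseteq J$ makes $\{\bar z\}\sqcup\{\bar b:b\in B\}$ a basis of $(L^{*})^{\mathrm{ab}}$. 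The induced surjection $L(\{z\}\sqcup B)\twoheadrightarrow L^{*}$ is an isomorphism on abelianizations, hence an isomorphism, since its kernel would lie in every term of the lower central series of $L^{*}$ and these intersect trivially in a free Lie algebra.

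Next, Lazard elimination (Lemma \ref{pnC1}) applied to $L^{*}=L(B\sqcup\{z\})$ identifies $J=\mathrm{id}_{L^{*}}(B)$ with the free Lie algebra on $Y=\{[b,\underbrace{z,\ldots,z}_{k}]:b\in B,\ k\ge 0\}$, on which $\ad_z$ acts as the shift $[b,z^{(k)}]\mapsto[b,z^{(k+1)}]$. Since $a$ and the elements of $S$ are finite Lie polynomials in $Y$, only finitely many elements $b_1,\ldots,b_r\in B$ appear in their supports. Let $J_0\subseteq J$ be the $\ad_z$-invariant free subalgebra generated by $\{[b_i,z^{(k)}]:1\le i\le r,\ k\ge 0\}$. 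The retraction $\pi\colon J\to J_0$ killing the remaining generators is a Lie algebra homomorphism that commutes with $\ad_z$, and one checks easily that $\pi(I)=\mathrm{id}_{J_0}(S)=I\cap J_0$. Because $a\in J_0$ and $\ad_z$ preserves $J_0$, the condition $\ad_z^n(a)\in I$ is equivalent to $\ad_z^n(a)\in\mathrm{id}_{J_0}(S)$ for every $n$.

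To conclude, I enumerate the basis of $J_0$ as $x_{rk+i}:=[b_i,z^{(k)}]$ with $1\le i\le r$ and $k\ge 0$, identifying $J_0$ with the free Lie algebra on $\{x_1,x_2,\ldots\}$ of Lemma \ref{lInvDer}. Under this identification $\ad_z$ corresponds to the $r$-th iterate $D^{r}$ of the shift derivation $D$, because $\ad_z(x_{rk+i})=x_{r(k+1)+i}=D^{r}(x_{rk+i})$, and $\mathrm{id}_{J_0}(S)$ is contained in the ideal $I_M$ generated by $\{x_1,\ldots,x_M\}$ for $M$ large enough to dominate the supports of $S$. Lemma \ref{lInvDer} produces some $N$ with $D^{N}(a)\notin I_M$; inspection of its proof actually yields the stronger conclusion that it persists for every $N'\ge N$, since the tuple of maximal first coordinate in the support of $D^{N}(a)$ admits a unique preimage under each further shift, so its nonzero coefficient survives all subsequent applications of $D$. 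Choosing $N'=rn$ for $n$ large, we obtain $\ad_z^{n}(a)=D^{rn}(a)\notin I_M\supseteq\mathrm{id}_{J_0}(S)$, hence $\ad_z^{n}(a)\notin I$, as required.

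The main technical hurdles will be (a) establishing that $\{z\}\sqcup B$ is a free basis of $L^{*}$ in the first step, which rests on the fact that a surjective Lie algebra homomorphism between free Lie algebras inducing an isomorphism on abelianizations is itself an isomorphism via residual nilpotence, and (b) extracting from the proof of Lemma \ref{lInvDer} the stronger persistence statement, which is what enables us to select a power of $D$ that is divisible by $r$.
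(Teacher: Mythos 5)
Your first step is where the argument breaks down. You claim that $L^{*}=Fz+J$ has a free basis $\{z\}\sqcup B$ with $B\subseteq J$, and you deduce surjectivity of $L(\{z\}\sqcup B)\to L^{*}$ from the fact that the images span the abelianization. That inference is invalid in a free Lie algebra: a set whose image spans $L^{*}/[L^{*},L^{*}]$ need not generate $L^{*}$. Concretely, in $L^{*}=L(x,y)$ with $J=\mathrm{id}_{L^{*}}(y)$ and $z=x$, the lift $B=\{y+[x,y]\}$ gives the set $\{x,\,y+[x,y]\}$, which spans the abelianization but generates a proper subalgebra: if $y=f(x,u)$ with $u=y+[x,y]$, then comparing the component of maximal multidegree (substitute $[x,y]$ for every $u$ in the part of $f$ maximizing $\deg_x+2\deg_u$; it survives because $x$ and $[x,y]$ are independent) forces $f\in Fx$, a contradiction. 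Worse, no choice of $B$ can repair the step in general: for $z=x+[x,y]\notin J$ there is \emph{no} free basis of $L(x,y)$ containing $z$ at all, since every automorphism of a free Lie algebra of rank two is linear (a classical consequence of the tameness of automorphisms of finitely generated free Lie algebras). This is exactly the difficulty the paper's proof is organized around: it replaces $z$ by $z'=z-v$ with $v\in J$ (for instance the degree-one component of $z$), which \emph{can} be included in a free basis, and then transfers the conclusion between iterated brackets by $z$ and by $z'$ via a Jacobi-identity rearrangement, using only that $I$ is an ideal of $J$. That transfer argument has no counterpart in your proposal and cannot be skipped.

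There is a second gap at the end. After interleaving the finitely many $\ad z$-orbits as $x_{rk+i}$, your $\ad z$ becomes $D^{r}$, so you need the conclusion of Lemma \ref{lInvDer} for an exponent divisible by $r$, and you supply it by asserting that once $D^{N}(a)\notin I_M$ the same holds for all larger exponents. The proof of Lemma \ref{lInvDer} gives no such persistence: the monomial it exhibits outside $I_M$ need not be the one of maximal first coordinate, and the persistence statement is false for general elements, e.g. $b=x_2x_2-x_3x_1-x_1x_3\notin I_1$ while $D(b)=-x_4x_1-x_1x_4\in I_1$ in the associative setting of that lemma. So as written this step is also unjustified. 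It is, however, the easier one to close: the paper proves a multi-orbit variant (Lemma \ref{lInvDerMO}) by mapping the orbits into a single shifted sequence with \emph{offsets}, $x^{(j)}_i\mapsto x_{i+(j-1)m}$, which is a differential homomorphism under which $\ad z$ corresponds to $D$ itself rather than to $D^{r}$, so no divisibility constraint ever arises; replacing your interleaved indexing by such an offset embedding would fix this part, but the reduction to a basis element described above still has to be done as in the paper.
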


\proof One may assume that $L=\langle J, z\rangle$ where $J$ is an ideal of codimension  $1$ in $L$. Then an element $z'=z-v$, where $v\in J$ can be included in a free basis of  $L$.  If for any number $n$ of occurrences of $z'$ we had $b_n=[a,z',...,z']\in I$ then we would also have that all  $a_n=[a,z,...,z]\in I$. To prove this, we first note that, if we replace $z$ by $z'+v$ then we will have the sum of left-normed commutators $w=[a,u_1,u_2,\ldots,u_n]$,  where each $u_i$ is either $z'$ or $v$. Using the Jacobi identity $[v,s,t]=[v,t,s]+[v,[s,t]]$, we can change places $z'$ and $v$ in $w$, if $z=t$ follows $v=s$ in $w$. The resulting commutators would still be of the form of $w$ but now $u_i$ are either $z'$ or some elements of $J$. If there is still some $z'$ to the right of some $u_i$, we repeat the process. When all $z'$ are to the left of all $u_i$ we have the elements of the form $[a,z',\ldots,z',u_1,\ldots,u_k]$. The initial portion $[a,z',\ldots,z']$ is in $I$, by our assumption. Since $I$ an ideal of $J$, the whole commutator is in $I$, as well. 

Hence, we may assume that $z$ is an element of the free basis of $L$.  Then the inner derivation $\ad z$ shifts the free generators of $J$ (see Lemma \ref{pnC1}), so we are tempted to apply Lemma  \ref{lInvDer}. However, the direct application is not possible, because there can be many different ``orbits'' for the action of $\ad z$ on the free basis of  $J$. Hence, we need an easy modification of Lemma  \ref{lInvDer}, as follows.

\begin{Lemma}\label{lInvDerMO} Let $A$ \emph{(}respectively, $L$\emph{)} be the free associative \emph{(}respectively, Lie\emph{)} algebra with the set $X$ of free generators split as a disjoint union of subsets $X^{(j)}=\{ x_1^{(j)},x_2^{(j)},\ldots\}$ over arbitrary field and $\delta$ its derivation given on the generators by the rule $\delta x_i^{(j)}=x_{i+1}^{(j)}$ \emph{(}$i=1,2,\dots$\emph{)}. Suppose $a$ is a
nonzero element  of $A$ \emph{(}of $L$\emph{)}. Then for every finite subset $S$ of $X$, there exists
$n\ge 0$ such that the element $\delta^n(a)$ does not belong to the ideal $I_S$
\emph{(}to the Lie ideal $J_S$\emph{)} of $A$ \emph{(}resp., of $L$\emph{)} generated by $S$.
\end{Lemma}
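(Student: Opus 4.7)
The plan is to reduce Lemma \ref{lInvDerMO} to Lemma \ref{lInvDer} by decomposing $a$ according to the \emph{orbit profiles} of its monomials. I will treat only the associative case, since the Lie case then follows exactly as in the proof of Lemma \ref{lInvDer} via the embedding $L \hookrightarrow A$ and the inclusion $J_S \subseteq I_S$.

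Two preliminary reductions are in order. First, because $S$ is finite, I may enlarge it to $\tilde S = \{x_i^{(j)} : 1 \le i \le k,\ j \in J_0\}$ for some integer $k \ge 1$ and some finite set $J_0$ of orbit indices; since $I_S \subseteq I_{\tilde S}$, proving the conclusion for $\tilde S$ suffices. Second, as $\delta$ preserves total degree and $I_{\tilde S}$ is graded, I may further assume that $a$ is homogeneous of some degree $c$.

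The key observation is that $\delta$ preserves each orbit individually: $\delta x_i^{(j)} = x_{i+1}^{(j)}$ stays in orbit $j$, so if a monomial $m = x_{i_1}^{(j_1)} \cdots x_{i_c}^{(j_c)}$ has orbit profile $\vec j = (j_1, \dots, j_c)$, then every monomial appearing in $\delta m$ has the same profile. Decomposing $a = \sum_{\vec j} a_{\vec j}$ by profile, the supports of the $\delta^n(a_{\vec j})$ for distinct $\vec j$ remain disjoint subsets of the standard basis of $A$ for all $n$, so it is enough to find $n$ and a single profile $\vec j$ with $a_{\vec j} \ne 0$ for which $\delta^n(a_{\vec j}) \notin I_{\tilde S}$. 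For such a fixed $\vec j$, the linear map $\phi_{\vec j}$ sending $x_{i_1}^{(j_1)} \cdots x_{i_c}^{(j_c)} \mapsto y_{i_1} \cdots y_{i_c}$ is an isomorphism from the profile-$\vec j$ subspace of $A$ onto the degree-$c$ component of the single-orbit free associative algebra $A(Y)$ on $Y = \{y_1, y_2, \dots\}$, and on monomials one checks directly that $\phi_{\vec j} \circ \delta = D \circ \phi_{\vec j}$, where $D y_i = y_{i+1}$ is the derivation of Lemma \ref{lInvDer}.

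Applying Lemma \ref{lInvDer} to the nonzero element $\phi_{\vec j}(a_{\vec j}) \in A(Y)$ with bound $k$ then produces $n$ such that $D^n(\phi_{\vec j}(a_{\vec j}))$ contains in its support a monomial $y_{i_1} \cdots y_{i_c}$ with every $i_s > k$. Transferring back through $\phi_{\vec j}^{-1}$, $\delta^n(a_{\vec j})$ contains the monomial $x_{i_1}^{(j_1)} \cdots x_{i_c}^{(j_c)}$, and none of its factors lies in $\tilde S$: if $j_s \notin J_0$ then $x_{i_s}^{(j_s)} \notin \tilde S$ by construction, while if $j_s \in J_0$ then $i_s > k$ forces the same conclusion. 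Hence this monomial is outside $I_{\tilde S}$, so $\delta^n(a_{\vec j}) \notin I_{\tilde S}$, and by disjointness of profile supports we conclude $\delta^n(a) \notin I_{\tilde S} \supseteq I_S$. The main point I expect to require care is recognizing that the problem decouples cleanly across orbit profiles; once this is observed, the intertwining $\phi_{\vec j}\,\delta = D\,\phi_{\vec j}$ and the non-cancellation between distinct profile classes are routine, following from the facts that $\delta$ does not mix orbits and that monomials with different profiles are distinct basis elements of $A$.
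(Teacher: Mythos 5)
Your argument is correct, and it reduces the statement to Lemma \ref{lInvDer} just as the paper does, but by a different mechanism. The paper handles all orbits at once: choosing $m$ so large that $a$ involves only the first $m$ letters of each subalphabet, it defines an algebra homomorphism $\varphi\colon A(X)\to A(x_1,x_2,\ldots)$ by $\varphi(x_i^{(j)})=x_{i+(j-1)m}$; this interleaves the orbits into the single chain, satisfies $\varphi\circ\delta=D\circ\varphi$, is injective on the letters occurring in $a$ (so $\varphi(a)\neq 0$), and carries $I_S$ into the ideal generated by the finite set $\varphi(S)$, so assuming $\delta^n(a)\in I_S$ for all $n$ contradicts Lemma \ref{lInvDer}. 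You instead decompose $a$ by orbit profiles, note that $\delta$ preserves each profile class, and for one nonzero component use a purely linear isomorphism onto the degree-$c$ part of the one-orbit algebra that intertwines $\delta$ with $D$; you then transport the conclusion back via the monomial characterization of $I_{\tilde S}$ (an element lies in the ideal generated by a subset of the free generators iff every monomial of its support contains a letter of that subset) together with the disjointness of supports across profiles. Both routes are sound; the monomial criterion you invoke is the same standard fact the paper's own proof of Lemma \ref{lInvDer} uses tacitly when it concludes $D^n(a)\notin I_k$ from the existence of a tuple with all indices exceeding $k$. The paper's homomorphism makes the bookkeeping nearly invisible (no profile decomposition or support analysis is needed, and ideals are respected automatically), whereas your version is direct rather than by contradiction, exhibits an explicit monomial of $\delta^n(a)$ avoiding $\tilde S$, and is insensitive to how many orbits there are or how they are re-indexed, at the cost of spelling out the graded and combinatorial structure that the homomorphism argument sidesteps.
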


\proof This is a direct consequence of Lemma \ref{lInvDer}. Indeed, considering, say, the associative case, let us assume that, for any $n$, we have $\delta^n(a)\in I_S$. Let us consider a homomorphism of associative algebras $\vp$ from $A(X)$ to the free associative algebra $A(x_1,x_2,\ldots)$ of Lemma \ref{lInvDer}, defined in the following way. Suppose an expression for $a$ in $A(X)$ includes only at most $m$ first letters from each of subalphabets $X^{(j)}$, $j=1,2,\ldots$. Then we set $ \vp(x_i^{(1)})=x_i$, $ \vp(x_i^{(2)})=x_{i+m}$, $ \vp(x_i^{(3)})=x_{i+2m}$, etc., where $i=1,2,\ldots$ Clearly, under this homomorphism, $\vp(a)\neq 0$ because we simply applied a bijective change of the sets of free variable involved in $a$. Now both algebras $A(X)$ and $A(x_1,x_2,\ldots)$ are equipped with derivations, $\delta$ in the former case and $D$, as in Lemma \ref{lInvDer}, in the latter, that is, $D(x_i)=x_{i+1}$, $i=1,2,\ldots$. Clearly, $\vp$ is a \textit{differential homomorphism}, that is $\vp\circ\delta=D\circ\vp$.

Since we assumed that $\delta^n(a)\in I_S$, for all $n$, applying $\vp$ would mean that $D^n(\vp(a))\in I_{\vp(S)}$. This comes as contradiction to Lemma \ref{lInvDer}.
\endproof

Theorem \ref{t3} follows from Lemma \ref{lInvDerMO}.
\endproof

Clearly, Theorem \ref{t0601} from the Introduction is the direct consequence of the just proven Theorem \ref{t3}.

Now we came closer to the proof of Corollary \ref{t3I1} from the Introduction. 

\proof  Let $J(i)$ be the $i\th$ subideal closure of a finite set $S$ of elements of $J=J(0)$,  as in Corollary \ref{t3I1}, and  $L\supset M_1\supset \ldots$ an arbitrary subideal series in $L$ with nonzero terms. We need to show that $M_\ell $ is not contained in $J(\ell)$.

Let us first prove that setting $N_i=J(i-1)\cap M_i$, $i=1,2,\ldots$, the series $L\supset N_1\supset N_2\ldots $ is also a subideal series with nonzero terms.  First of all, if $P\cap Q=\{ 0\}$ for $\{ 0\}\neq P,Q\triangleleft L$, $a\in P$, $b\in Q$ then $[a,b]=0$. Now the subalgebra $M$ generated by $a,b$ is free; since $a,b$ commute, $M$ must be abelian, hence $\dim M=1$. So $a,b$ are scalar multiples of each other, a contradiction with $P\cap Q=\{ 0\}$. Therefore, the intersection $N_1$ of two nonzero ideals $M_1$ and $J=J(0)$ of a free Lie algebra $L$ is again nonzero. Now since $N_1$ and $J(1)$ are two nonzero ideals of $J(0)$, their intersection $N_1\cap J(1)$ is nonzero. Similarly, $M_2\cap N_1$ is nonzero. Hence $(M_2\cap N_1)\cap (N_1\cap J(1))$ is nonzero, being the intersection of two nonzero ideals of $N_1$. As a result, $N_2=M_2\cap  J(1)\ne\{ 0\}$. Continuing in the same way, we find that all $N_i=M_i\cap J(i-1)$ are nonzero.

As a result, when we resume proving Corollary \ref{t3I1}, we may assume that $M_\ell\subset J(\ell-1)$, for all $\ell$. So we need to prove that $M_\ell\not\subset J(\ell)$, for any $\ell=1,2,\ldots$. Assume the contrary, that is, $M_\ell\subset J(\ell)$, for some $\ell=1,2,\ldots$.  If $\ell=1$, we can apply Theorem \ref {t0601}, because $J$ is a proper ideal in $L$ while $J(1)$ is an ideal generated in $J=J(0)$ by a finite subset of elements. It follows that $J(1)$ does not contain an ideal $M_1$ of the whole algebra $L$. 
Thus, if there is $\ell$ such that $M(\ell)\subset J(\ell)$ then $\ell > 1$. In the case this happens, let $\ell$ be minimal with this property. Consider the ideal closure $R$ of  $M(\ell)$ in $J(\ell-1)$.  Then $R$ is contained in $J(\ell)$, which is the ideal closure of the finite set $S\subset J(\ell-1)$. By the minimality of the choice of $\ell$, there is $z\in M_{l-1}$, which is not an element of $J(l-1)$. As established earlier, $z\in M({l-1})\subset J(l-2)$. Since $[z, M(\ell)]\subset M(\ell)$ and $[z, J(\ell-1)]\subset J(\ell-1)$, we have that $[z, R]\subset R$. This easily follows using the  Jacobi identity because every element of $R$ is a linear combination of the commutators of the form $[y,x_1,\ld,x_m]$, where $y\in M(\ell)$ and $x_1,\ld,x_m\in J(\ell-1)$. As a result, $R$ is a nonzero ideal in the subalgebra $P$ generated by $J(\ell-1)$ and $z$. This contradicts Theorem \ref{t0601} applied to the algebra $P$ and its proper ideal $J(\ell-1)$. Hence, the proof of Corollary \ref{t3I1} is complete. 
\endproof

\begin{Remark}\label{rlast} In the case of associative algebras we have a very simple example when there is an ideal $I$ generated by finitely many elements in a proper ideal $J$ of a free associative algebra $A$, and still containing a nonzero ideal of $A$. For this one can take $A$ with free basis $\{ x\}$, $J$ the principal ideal of $A$ generated by $x^2$, $S=\{ x^2, x^3\}$, $I$ the ideal of $J$ generated by $S$. The reader would easily check that the nonzero ideal of $A$ contained in $I$ is \ldots $I$ itself! 
\end{Remark}

\end{document}